\begin{document}

\newcommand{\C}{{\mathbb{C}}}
\newcommand{\R}{{\mathbb{R}}}
\newcommand{\Q}{B}
\newcommand{\Z}{{\mathbb{Z}}}
\newcommand{\N}{{\mathbb{N}}}
\newcommand{\q}{\left}
\newcommand{\w}{\right}
\newcommand{\Ninf}{\N_\infty}
\newcommand{\Vol}[1]{\mathrm{Vol}\q(#1\w)}
\newcommand{\B}[4]{B_{\q(#1,#2\w)}\q(#3,#4\w)}
\newcommand{\CjN}[3]{\q\|#1\w\|_{C^{#2}\q(#3\w)}}
\newcommand{\Cj}[2]{C^{#1}\q( #2\w)}
\newcommand{\grad}{\bigtriangledown}
\newcommand{\sI}[2]{\mathcal{I}\q(#1,#2 \w)}
\newcommand{\Det}[1]{\det_{#1\times #1}}
\newcommand{\sK}{\mathcal{K}}
\newcommand{\sKt}{\widetilde{\mathcal{K}}}
\newcommand{\sA}{\mathcal{A}}
\newcommand{\sB}{\mathcal{B}}
\newcommand{\sC}{\mathcal{C}}
\newcommand{\sD}{\mathcal{D}}
\newcommand{\sS}{\mathcal{S}}
\newcommand{\sF}{\mathcal{F}}
\newcommand{\sQ}{\mathcal{Q}}
\newcommand{\sV}{\mathcal{V}}
\newcommand{\sM}{\mathcal{M}}
\newcommand{\sT}{\mathcal{T}}
\newcommand{\cV}{\q( \sV\w)}
\newcommand{\vsig}{\varsigma}
\newcommand{\vsigt}{\widetilde{\vsig}}
\newcommand{\dil}[2]{#1^{\q(#2\w)}}
\newcommand{\dilp}[2]{#1^{\q(#2\w)_{p}}}
\newcommand{\lA}{-\log_2 \sA}
\newcommand{\eh}{\widehat{e}}
\newcommand{\Ho}{\mathbb{H}^1}
\newcommand{\sd}{\sum d}
\newcommand{\dt}{\tilde{d}}
\newcommand{\dhc}{\hat{d}}
\newcommand{\Span}[1]{\mathrm{span}\q\{ #1 \w\}}
\newcommand{\dspan}[1]{\dim \Span{#1}}
\newcommand{\K}{K_0}
\newcommand{\ad}[1]{\mathrm{ad}\q( #1 \w)}
\newcommand{\LtOpN}[1]{\q\|#1\w\|_{L^2\rightarrow L^2}}
\newcommand{\LpOpN}[2]{\q\|#2\w\|_{L^{#1}\rightarrow L^{#1}}}
\newcommand{\LpN}[2]{\q\|#2\w\|_{L^{#1}}}
\newcommand{\Jac}{\mathrm{Jac}\:}
\newcommand{\kapt}{\widetilde{\kappa}}
\newcommand{\gt}{\widetilde{\gamma}}
\newcommand{\gtt}{\widetilde{\widetilde{\gamma}}}
\newcommand{\gh}{\widehat{\gamma}}
\newcommand{\Sh}{\widehat{S}}
\newcommand{\Wh}{\widehat{W}}
\newcommand{\Ih}{\widehat{I}}
\newcommand{\Wt}{\widetilde{W}}
\newcommand{\Xt}{\widetilde{X}}
\newcommand{\Tt}{\widetilde{T}}
\newcommand{\Dt}{\widetilde{D}}
\newcommand{\Phit}{\widetilde{\Phi}}
\newcommand{\Vh}{\widehat{V}}
\newcommand{\Xh}{\widehat{X}}
\newcommand{\deltah}{\widehat{\delta}}
\newcommand{\sSh}{\widehat{\mathcal{S}}}
\newcommand{\sFh}{\widehat{\mathcal{F}}}
\newcommand{\thetah}{\widehat{\theta}}
\newcommand{\ct}{\widetilde{c}}
\newcommand{\at}{\tilde{a}}
\newcommand{\bt}{\tilde{b}}
\newcommand{\fg}{\mathfrak{g}}
\newcommand{\cC}{\q( \sC\w)}
\newcommand{\cG}{\q( \sC_{\fg}\w)}
\newcommand{\cJ}{\q( \sC_{J}\w)}
\newcommand{\cY}{\q( \sC_{Y}\w)}
\newcommand{\cYu}{\q( \sC_{Y}\w)_u}
\newcommand{\cJu}{\q( \sC_{J}\w)_u}
\newcommand{\Bb}{\overline{B}}
\newcommand{\Qb}{\overline{Q}}
\newcommand{\sP}{\mathcal{P}}
\newcommand{\sN}{\mathcal{N}}
\newcommand{\cH}{\q(\mathcal{H}\w)}
\newcommand{\Omegat}{\widetilde{\Omega}}
\newcommand{\Kt}{\widetilde{K}}
\newcommand{\sMt}{\widetilde{\sM}}
\newcommand{\denum}[2]{#1_{#2}}
\newcommand{\phit}{\tilde{\phi}}
\newcommand{\nuset}{\q\{1,\ldots, \nu\w\}}
\newcommand{\diam}[1]{ {\mathrm{diam}}\q\{#1\w\} }
\newcommand{\Nt}{\widetilde{N}}
\newcommand{\psit}{\widetilde{\psi}}
\newcommand{\sigt}{\widetilde{\sigma}}
\newcommand{\muoS}{\q\{\mu_1\w\}}
\newcommand{\Mjcutoff}{\nu+2}
\newcommand{\LplqOpN}[3]{\q\| #3 \w\|_{L^{#1}\q(\ell^{#2}\q(\N^{\nu} \w) \w)\rightarrow L^{#1}\q( \ell^{#2 }\q( \N^{\nu} \w) \w) }}
\newcommand{\LplqN}[3]{\q\| #3 \w\|_{L^{#1}\q(\ell^{#2 }\q(\N^{\nu} \w) \w) }}
\newcommand{\iinf}{\iota_\infty}
\newcommand{\jz}{\ell}
\newcommand{\Lpp}[2]{L^{#1}\q(#2\w)}
\newcommand{\Lppn}[3]{\q\|#3\w\|_{\Lpp{#1}{#2}}}
\newcommand{\Ewmu}{E\cup \q\{\mu_1\w\}}
\newcommand{\Ewmuc}{\q(\Ewmu\w)^c}
\newcommand{\ip}[2]{\q< #1, #2 \w>}

\newtheorem{thm}{Theorem}[section]
\newtheorem{cor}[thm]{Corollary}
\newtheorem{prop}[thm]{Proposition}
\newtheorem{lemma}[thm]{Lemma}
\newtheorem{conj}[thm]{Conjecture}

\theoremstyle{remark}
\newtheorem{rmk}[thm]{Remark}

\theoremstyle{definition}
\newtheorem{defn}[thm]{Definition}

\theoremstyle{remark}
\newtheorem{example}[thm]{Example}

\numberwithin{equation}{section}

\title{Multi-parameter singular Radon transforms II: the $L^p$ theory}
\author{Elias M. Stein\footnote{Partially supported by NSF DMS-0901040.}  and Brian Street\footnote{Partially supported by NSF DMS-0802587.}}
\date{}

\maketitle

\begin{abstract}
The purpose of this paper is to study the $L^p$ boundedness 
of operators of the form
\[
f\mapsto \psi(x) \int f(\gamma_t(x))K(t)\: dt,
\]
where $\gamma_t(x)$ is a $C^\infty$ function defined on a neighborhood
of the origin in $(t,x)\in \R^N\times \R^n$, satisfying
$\gamma_0(x)\equiv x$, $\psi$ is a $C^\infty$ cutoff function supported
on a small neighborhood of $0\in \R^n$, and
$K$ is a ``multi-parameter singular kernel'' supported on a small
neighborhood of $0\in \R^N$.  We also study associated maximal operators.
The goal is, given an appropriate class of kernels $K$, to give
conditions on $\gamma$ such that every operator of the above
form is bounded on $L^p$ ($1<p<\infty$).  The case when
$K$ is a Calder\'on-Zygmund kernel was studied by
Christ, Nagel, Stein, and Wainger; we generalize their
work to the case when $K$ is (for instance) given by a ``product kernel.''  Even
when $K$ is a Calder\'on-Zygmund kernel, our methods yield some new results.
This is the second paper in a three part series.  The first
paper deals with the case $p=2$, while the third paper
deals with the special case when $\gamma$ is real analytic.

\end{abstract}

\section{Introduction}
The goal of this paper is to prove the $L^p$ boundedness
of (a special case of) the multi-parameter singular
Radon transforms introduced in \cite{StreetMultiParameterSingRadonLt}.
We consider operators of the form
\begin{equation}\label{EqnIntroOp}
T\q( f\w) \q( x\w) = \psi\q( x\w) \int f\q( \gamma_t\q( x\w) \w) K\q( t\w) \: dt,
\end{equation}
where $\psi$ is a $C_0^\infty$ cut off function (supported near, say, $0\in \R^n$),
$\gamma_t\q( x\w) =\gamma\q( t,x\w)$ is a $C^\infty$ function defined
on a neighborhood of the origin in $\R^N\times \R^n$ satisfying
$\gamma_0\q( x\w) \equiv x$, and $K$ is a ``multi-parameter''
distribution kernel, supported near $0$ in $\R^N$.
For instance, one could take $K$ to be a ``product kernel''
supported near $0$.\footnote{Our main theorem applies to classes
of kernels other than product kernels.}
  To define this notion, suppose we have
decomposed $\R^N=\R^{N_1}\times \cdots\times \R^{N_\nu}$,
and write $t=\q( t_1,\ldots, t_{\nu}\w)\in \R^{N_1}\times \cdots\times \R^{N_\nu}$.
A product kernel satisfies
\begin{equation*}
\q|\partial_{t_1}^{\alpha_1}\cdots \partial_{t_\nu}^{\alpha_\nu} K\q( t\w)\w|\lesssim \q|t_1\w|^{-N_1-\q|\alpha_1\w|}\cdots \q|t_{\nu}\w|^{-N_{\nu}-\q|\alpha_\nu\w|},
\end{equation*}
along with certain ``cancellation conditions.''\footnote{The simplest 
example of a product kernel is given by $K\q(t_1,\ldots,t_\nu\w)=K_1\q(t_1\w)\otimes\cdots\otimes K_\nu\q(t_\nu\w)$, where $K_1,\ldots, K_\nu$ are standard
Calder\'on-Zygmund kernels.  That is, $K_j$ satisfies
$\q|\partial_{t_j}^{\alpha_j} K_j\q(t_j\w)\w|\lesssim \q|t_j\w|^{-N_j-\q|\alpha_j\w|}$, again along with certain ``cancellation conditions.''  
When $\nu=1$, the class of product kernels is precisely the class
of Calder\'on-Zygmund kernels.
See
Section 16 of \cite{StreetMultiParameterSingRadonLt} for the 
statement of the cancellation conditions.  We do not make them precise
in this paper, since we will be working with more general kernels $K$.}
The goal is to develop conditions on $\gamma$ such that $T$ is
bounded on $L^p$ ($1<p<\infty$).  In addition, we will
prove (under the same conditions on $\gamma$) $L^p$ boundedness ($1<p\leq \infty$) for the corresponding
maximal operator,
\begin{equation*}
\sM f\q( x\w) = \psi\q(x\w) \sup_{0<\delta_1,\ldots,\delta_\nu\leq a}\frac{1}{\delta_1^{N_1} \delta_2^{N_2}\cdots\delta_\nu^{N_\nu} } \int_{\q|t_\mu\w|\leq \delta_\mu} \q|f\q( \gamma_t\q( x\w)\w)\w|\: dt_1\cdots dt_\nu,
\end{equation*}
where $a>0$ is some small number depending on $\gamma$.
In fact, the $L^p$ boundedness of $\sM$ will be a step towards
proving the $L^p$ boundedness of $T$.

This paper is the second in a three part series.  The first paper in the series
\cite{StreetMultiParameterSingRadonLt}
dealt with the $L^2$ theory, and applied to a larger class
of kernels than the results in this paper do (see Section \ref{SectionKernels} for
a discussion).
However, all of the examples we have in mind (and in particular all
of the examples discussed in \cite{StreetMultiParameterSingRadonLt})
do fall under the theory discussed in this paper.
An $L^2$ result from \cite{StreetMultiParameterSingRadonLt}
will serve as one of the main technical lemmas
of this paper.
The third paper in the series \cite{StreetMultiParameterSingRadonAnal} deals with the special case
when $\gamma$ is assumed to be real-analytic.  In this case
many of the assumptions from this paper take a much simpler
form, and some of our results can be improved.
See \cite{SteinStreetMultiparameterSingularRadonTransformsAnnounce}
for an overview of the series.

For a more detailed introduction to the operators
defined in this paper, we refer the reader to
\cite{StreetMultiParameterSingRadonLt},
which discusses special motivating cases, and gives a number
of examples.

\begin{rmk}
The results in this paper generalize the $L^p$ boundedness results
from the work (in the single-parameter setting, when $K$
is a Calder\'on-Zygmund kernel)
of Christ, Nagel, Stein, and Wainger 
\cite{ChristNagelSteinWaingerSingularAndMaximalRadonTransforms}.
In fact, as discussed in
\cite{StreetMultiParameterSingRadonLt}, the results in this paper
generalize the $L^p$ boundedness results 
in \cite{ChristNagelSteinWaingerSingularAndMaximalRadonTransforms} 
even if one considers only the single parameter setting.\footnote{For instance, in the single parameter setting if $K\q(t\w)$ is a Calder\'on-Zygmund kernel supported near $t=0$,
$\gamma_t\q(x\w)$ is a real analytic function in both variables, defined on a neighborhood of $\q(0,0\w)\in \R^N\times \R^n$ satisfying $\gamma_0\q(x\w)\equiv x$,
and $\psi\q(x\w)\in C_0^\infty$ is supported near $x=0$, then it follows from the results in this paper that the operator $T$ given by \eqref{EqnIntroOp} is 
bounded on $L^p$ ($1<p<\infty$) under no additional hypotheses--see \cite{StreetMultiParameterSingRadonAnal} for the full details on this.  This result
is beyond the methods of \cite{ChristNagelSteinWaingerSingularAndMaximalRadonTransforms}.}
One major difficulty that this paper deals with, which did not appear
in \cite{ChristNagelSteinWaingerSingularAndMaximalRadonTransforms},
is that (unlike the single-parameter setting) there is no
relevant Calder\'on-Zygmund theory to fall back on.  At least
not {\it a priori}.
See Remark \ref{RmkNoCZTheory} and Section \ref{SectionSingularIntegrals}
for further details.
What can be used instead is the idea of controlling operators by square functions;
a procedure that has appeared a number of times before.  See,
for instance,
\cite{FeffermanSteinSingularIntegralsOnProductSpaces,NagelSteinOnTheProductTheoryOfSingularIntegrals}.
\end{rmk}

\subsection{Informal statement of the main results}
In this section, we informally state the special case of our main results
when $K\q(t_1,\ldots, t_\nu\w)$ is a product kernel
relative to the decomposition $\R^N=\R^{N_1}\times \cdots\times \R^{N_\nu}$ (see
the introduction for this notion).
We suppose we are given a $C^\infty$ function, $\gamma\q(t,x\w)=\gamma_t\q(x\w)\in \R^n$ defined
on a small neighborhood of the origin in $\q(t,x\w)\in \R^N\times \R^n$,
satisfying $\gamma_0\q(x\w)\equiv x$.
For $t$ sufficiently small, $\gamma_t$ (thought of of a function in the $x$-variable) is a diffeomorphism onto
its image.  Thus, it makes sense to write $\gamma_t^{-1}$, the inverse mapping. 
We define the vector field
\begin{equation*}
W\q(t,x\w)= \frac{d}{d\epsilon}\bigg|_{\epsilon=1} \gamma_{\epsilon t}\circ \gamma_t^{-1} \q(x\w)\in T_x \R^n.
\end{equation*}
Note that $W\q(0,x\w)\equiv 0$.

For a collection of vector fields $\sV$, let $\sD\q(\sV\w)$ denote
the involutive distribution generated by $\sV$.  I.e., the smallest
$C^\infty$ module containing $\sV$ and such that if $X,Y\in \sD\q(\sV\w)$
then $\q[X,Y\w]\in \sD\q(\sV\w)$.
For a multi-index $\alpha\in \N^N$, write $\alpha=\q(\alpha_1,\ldots, \alpha_\nu\w)$, with $\alpha_\mu\in \N^{N_\mu}$.

Decompose $W$ into a Taylor series in the $t$ variable,
\begin{equation*}
W\q(t,x\w)\sim \sum_{\alpha\ne 0} t^{\alpha}X_\alpha.
\end{equation*}
We call $\alpha=\q(\alpha_1,\ldots, \alpha_\nu\w)\in \N^N$ a pure
power if $\alpha_\mu\ne 0$ for precisely one $\mu$.  Otherwise,
we call it a non-pure power.

We assume that the following conditions hold ``uniformly'' for
$\delta=\q(\delta_1,\ldots, \delta_\nu\w)\in \q(0,1\w]^\nu$,
though we defer making this notion of uniform
precise to Section \ref{SectionCurves}.
\begin{itemize}
\item For every $\delta\in \q(0,1\w]^\nu$,
\begin{equation*}
\sD_{\delta}:=\sD\q(\q\{\delta_1^{\q|\alpha_1\w|}\cdots\delta_\nu^{\q|\alpha_\nu\w|}X_{\alpha_1,\ldots,\alpha_\nu} : \q(\alpha_1,\ldots,\alpha_\nu\w)\text{ is a pure power}\w\}\w)
\end{equation*}
is finitely generated as a $C^\infty$ module, uniformly in $\delta$.

\item For every $\delta\in \q(0,1\w]^\nu$,
\begin{equation}\label{EqnIntroWInsD}
W\q(\delta_1 t_1,\ldots, \delta_\nu t_\nu\w)\in \sD_\delta,
\end{equation}
uniformly in $\delta$.
\end{itemize}

\begin{rmk}
If it were not for the ``uniform'' aspect of the above assumptions, they
would be independent of $\delta$.  Thus it is the uniform part,
which we have not made precise, that is the heart of the above
assumptions.
\end{rmk}

Our main theorems are,
\begin{thm}
Under the above assumptions (which are made
precise in Section \ref{SectionCurves}), the operator given by
\begin{equation*}
f\mapsto \psi\q(x\w) \int f\q(\gamma_t\q(x\w)\w) K\q(t\w)\: dt,
\end{equation*}
is bounded on $L^p$ ($1<p<\infty$), for every product kernel $K\q(t_1,\ldots, t_\nu\w)$,
with sufficiently small support, provided $\psi$ has sufficiently small support.
\end{thm}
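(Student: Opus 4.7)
The plan is to decompose the product kernel into a multi-parameter dyadic sum, reduce to ``single-scale'' operators, and then combine them using a square-function/maximal-function argument that substitutes for the (unavailable) multi-parameter Calder\'on-Zygmund theory on the $x$-side. First, I would write $K(t)=\sum_{j\in \N^\nu} K^{(j)}(t)$, where $K^{(j)}(t)=2^{j_1 N_1+\cdots+j_\nu N_\nu}\phi^{(j)}(2^{j_1}t_1,\ldots,2^{j_\nu}t_\nu)$ with $\phi^{(j)}$ a bump of uniformly bounded $C^\infty$ norm, supported in $\q\{\q|t_\mu\w|\lesssim 1\w\}$, and with mean zero in each coordinate $t_\mu$ for which $j_\mu>0$ (the latter uses the cancellation conditions of a product kernel). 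Setting $T_j f(x)=\psi(x)\int f(\gamma_t(x)) K^{(j)}(t)\,dt$, one has $T=\sum_j T_j$.

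Second, I would upgrade the $L^2$ theorem of \cite{StreetMultiParameterSingRadonLt} to exponential decay $\LtOpN{T_j}\lesssim 2^{-\epsilon\q|j\w|}$ for some $\epsilon>0$. The uniform finite generation of $\sD_\delta$ and the containment $W(\delta_1 t_1,\ldots,\delta_\nu t_\nu)\in \sD_\delta$ at the scales $\delta=2^{-j}$ are exactly the hypotheses under which the $L^2$ input from Paper I applies at each dyadic scale; the additional decay comes from integrating the cancellation of $\phi^{(j)}$ against the Taylor expansion of $\gamma_t$. In parallel, I would establish $L^p$ boundedness of $\sM$ for $1<p\leq\infty$. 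The geometric hypotheses equip $\R^n$ with a multi-parameter family of Carnot--Carath\'eodory-like balls $B\q(x,\delta\w)$ adapted to $\q\{\delta^{\q|\alpha\w|}X_\alpha\w\}$, doubling uniformly in $\delta$; since $\gamma_t(x)\in B\q(x,\delta\w)$ when $\q|t_\mu\w|\leq \delta_\mu$ (using $W\in \sD_\delta$ to flow along generators), one can dominate $\sM$ by a composition of $\nu$ one-parameter Hardy-Littlewood-type maximal operators, each $L^p$-bounded by a Vitali covering argument.

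Third and most delicate comes the passage from $L^2$ to $L^p$. I would construct a Littlewood-Paley decomposition $\q\{D_k\w\}_{k\in \N^\nu}$ on $\R^n$, adapted to the vector fields, satisfying (a) $\sum_k D_k = I$ on a neighborhood of $\mathrm{supp}\,\psi$; (b) a multi-parameter square-function inequality $\Lppn{p}{\R^n}{f}\sim \Lppn{p}{\R^n}{\q(\sum_k \q|D_k f\w|^2\w)^{1/2}}$ for $1<p<\infty$; and (c) each $D_k$ carries cancellation at scale $2^{-k}$ matched to the geometry. Writing $Tf=\sum_{j,k}T_j D_k f$, almost-orthogonality between $T_j$ and $D_k$ (combining step two with the cancellation of $D_k$) yields $\LtOpN{T_j D_k}\lesssim 2^{-\epsilon \q(\q|j\w|+\q|k-j\w|\w)}$. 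Coupled with a pointwise estimate $\q|T_j D_{k} f(x)\w|\lesssim \sM\q(\q|D_{k}f\w|^r\w)(x)^{1/r}$ for some $r<p$, together with the vector-valued boundedness of $\sM$ (Fefferman-Stein style) and the square-function inequality (b), this closes $\Lppn{p}{\R^n}{Tf}\lesssim \Lppn{p}{\R^n}{f}$ for $1<p<\infty$.

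The main obstacle is step three: building an $x$-side Littlewood-Paley decomposition that is simultaneously adapted to the non-commuting, $\delta$-dependent vector fields $X_\alpha$, obeys the multi-parameter square-function inequality, and whose cancellation interacts correctly with the orbits of $\gamma_t$. This is precisely where the uniform finite generation of $\sD_\delta$ enters in a delicate way: it permits the choice of a distinguished frame at each multi-scale and the use of its flow to manufacture the $D_k$'s, in the spirit of \cite{FeffermanSteinSingularIntegralsOnProductSpaces,NagelSteinOnTheProductTheoryOfSingularIntegrals}. Everything else should follow by now-standard almost-orthogonality and vector-valued maximal-function arguments once this machinery is in place.
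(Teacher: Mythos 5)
Your high-level architecture matches the paper's: dyadic decomposition $K=\sum_j \dil{\vsig_j}{2^j}$, $L^2$ almost-orthogonality from Paper I, a Littlewood–Paley square function built from flows of the generating vector fields, and interpolation with a maximal function to reach $L^p$. Two specific steps, however, as written, would not go through.

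\textbf{The $L^2$ decay.} You claim $\LtOpN{T_j}\lesssim 2^{-\epsilon\q|j\w|}$ and then $\LtOpN{T_j D_k}\lesssim 2^{-\epsilon(\q|j\w|+\q|k-j\w|)}$. Neither holds. There is no absolute decay in $j$: the $L^2$ norms of the pieces $T_j$ are uniformly $\approx 1$, and the cancellation in the product kernel is ``lateral.'' What actually decays is quantities like $\LtOpN{D_{j_1}T_{j_2}D_{j_3}}\lesssim 2^{-\epsilon\,\mathrm{diam}\q\{j_1,j_2,j_3\w\}}$ (Theorem \ref{ThmL2Thm}). If $j_1=j_2=j_3=j$ with $\q|j\w|$ large, no gain is available; the gain is only in the spread of the indices. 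Any downstream summation that relies on your claimed decay in $\q|j\w|$ collapses. The paper's workaround is to sandwich $T_j$ between two Littlewood–Paley operators (the $D$'s on both sides) and to run the Calder\'on reproducing formula and a vector-valued interpolation between $L^2(\ell^2)$ and $L^p(\ell^\infty)$; the $\ell^\infty$ endpoint is exactly where the maximal theorem enters.

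\textbf{The maximal function.} You propose to dominate $\sM$ by a composition of $\nu$ one-parameter maximal operators, each handled by Vitali covering. This is false in the generality of the theorem: the multi-parameter Carnot--Carath\'eodory ball $\B{X}{d}{x}{\delta}$ is \emph{not} a product of one-parameter balls when commutators mix the parameters, and the paper explicitly notes (Section \ref{SectionSingularIntegrals}) that the domination $\sMt \lesssim \q(\sMt_1\cdots\sMt_\nu\w)^M$ holds only in a special ``convolution-like'' case and ``does not seem to yield Theorem \ref{ThmMainMaxThm} in the general case.'' The paper instead proves the maximal theorem by induction on $\nu$, introducing the finite-difference operators $B_j=\sum_{E\subseteq\nuset}(-1)^{\q|E\w|}A_{j_{E^c}}M_{j_E}$ to manufacture cancellation, pairing them with the Littlewood--Paley $D_j$'s, and then running a genuine bootstrapping argument (Lemma \ref{LemmasPProp}) in the exponent $p$, since the $L^p(\ell^\infty)$ input needed in the vector-valued step is precisely the $L^p$ boundedness of $\sM$ itself. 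Without this self-referential bootstrap (or a substitute), your step two does not close.

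Your ``main obstacle''---constructing the $D_k$'s---is actually the part the paper handles most cleanly: each single-parameter factor $D^\mu_{j_\mu}$ is itself a Radon transform along $e^{t\cdot X^\mu}$ whose $L^p$ theory reduces to classical Calder\'on--Zygmund theory on the leaves (Section \ref{SectionCZOps}), and Khintchine plus the reproducing formula then give the square-function bounds. The genuinely hard points are the two items above.
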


\begin{thm}
Under the same assumptions, the maximal operator given by
\begin{equation*}
f\mapsto \psi\q(x\w) \sup_{0<\delta_1,\ldots, \delta_\nu<<1} \int_{\q|t\w|<1} \q| f \q(\gamma_{\delta_1 t_1,\ldots, \delta_\nu t_\nu}\q(x\w) \w) \w|\: dt
\end{equation*}
is bounded on $L^p$ ($1<p\leq \infty$).
\end{thm}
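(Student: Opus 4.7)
The $L^\infty$ bound is immediate: for $|t|$ sufficiently small, $\gamma_t$ is a diffeomorphism onto its image, so $\sM f$ is a supremum of averages of $|f|$ over sets of bounded measure, and $\|\sM f\|_{L^\infty}\lesssim \|f\|_{L^\infty}$. So the substance of the theorem is in the range $1<p<\infty$.

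The plan is to pointwise dominate $\sM$ by a composition of $\nu$ single-parameter maximal operators. For each $\mu\in\{1,\ldots,\nu\}$, write $\gamma^{(\mu)}_{t_\mu}:=\gamma_{(0,\ldots,t_\mu,\ldots,0)}$ (only the $\mu$-th block of parameters is nonzero) and define
\[
\sM_\mu g(x)= \psi(x) \sup_{0<\delta_\mu\ll 1}\frac{1}{\delta_\mu^{N_\mu}}\int_{|t_\mu|<\delta_\mu}|g(\gamma^{(\mu)}_{t_\mu}(x))|\,dt_\mu.
\]
The Taylor coefficients of the vector field attached to $\gamma^{(\mu)}$ are exactly the pure-power $X_\alpha$ with $\alpha$ supported in the $\mu$-th block, i.e., precisely one subfamily of the generators of $\sD_\delta$. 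The uniform finite generation hypothesis, restricted to the $\mu$-th block, specializes to the finite-type condition of Christ-Nagel-Stein-Wainger \cite{ChristNagelSteinWaingerSingularAndMaximalRadonTransforms} for the single-parameter curve $\gamma^{(\mu)}$. Their theorem then gives $\sM_\mu:L^p\to L^p$ for all $1<p\le\infty$.

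The core step is the pointwise inequality
\[
\sM f(x)\;\lesssim\;\sM_1\circ\sM_2\circ\cdots\circ \sM_\nu\,|f|(x).
\]
Fix a scale $\delta=(\delta_1,\ldots,\delta_\nu)$. The second hypothesis $W(\delta_1 t_1,\ldots,\delta_\nu t_\nu)\in\sD_\delta$, uniformly in $\delta$, combined with the quantitative Frobenius / exponential-of-vector-fields machinery of \cite{StreetMultiParameterSingRadonLt}, should allow one to rewrite $\gamma_{\delta_1 t_1,\ldots,\delta_\nu t_\nu}(x)$ as a composition of flows along the pure-power generators of $\sD_\delta$, one block at a time, after a bi-Lipschitz change of variable on the $t$-side whose Jacobian is comparable to $1$ uniformly in $\delta$. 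Applying Fubini to the resulting iterated integral and bounding each inner integral by the corresponding $\sM_\mu$ gives the pointwise domination, uniformly in $\delta$; taking the supremum in $\delta$ finishes the claim.

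Granting the pointwise bound, the theorem follows by composing the $L^p$ estimates for the $\sM_\mu$, each of which is a positive sublinear operator. The main obstacle is the factorization step: the maps $\gamma_{t_\mu}$ in different blocks do not commute, so one must use the uniform involutive structure of $\sD_\delta$ to absorb all commutator errors into reparameterizations of the pure-power flows, with constants that remain controlled as any $\delta_\mu\to 0$. This scale-invariant Frobenius factorization, made precise in \cite{StreetMultiParameterSingRadonLt}, is the whole reason for the ``uniform in $\delta$'' phrasing of the hypotheses (to be formalized in Section \ref{SectionCurves}); once it is in place, the maximal bound is a matter of iterating the single-parameter theorem.
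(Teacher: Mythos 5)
Your proposal hinges on the pointwise domination
\[
\sM f(x)\;\lesssim\;\sM_1\circ\sM_2\circ\cdots\circ \sM_\nu\,|f|(x),
\]
which you propose to establish by factoring $\gamma_{\delta_1 t_1,\ldots,\delta_\nu t_\nu}$ into a composition of one-block flows via ``scale-invariant Frobenius.'' This is precisely the step that does not go through in the generality of Theorem \ref{ThmMainMaxThm}, and the paper says so explicitly. In Section \ref{SectionSingularIntegrals}, where $\gamma$ has the very special exponential form $\gamma_t(x)=e^{t_1X_1+\cdots+t_qX_q}x$, the analogous composition estimate is stated as $\sMt f(x)=(\sMt_1\cdots\sMt_\nu)^Mf(x)$ for some large $M$ (not $M=1$, because the block flows do not commute), and the paper remarks that even this ``somewhat lengthy and technical'' argument ``does not seem to yield Theorem \ref{ThmMainMaxThm} in the general case.'' In the general setting, $\gamma_t$ is an arbitrary $C^\infty$ map satisfying the control hypotheses; the condition $W(\delta t,\cdot)\in\sD_\delta$ constrains the ``tangent'' vector field, but it does not give a factorization of $\gamma_{\delta t}$ itself into block flows with uniformly bounded reparameterizations on the $t$-side. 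Absorbing the commutator errors as you describe is exactly the nontrivial content that is absent from your proposal, and no argument is given that the iterated reparameterizations remain uniform as some $\delta_\mu\to 0$ while others stay of order $1$.

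The paper's actual proof takes an entirely different route. It discretizes scales to get the operators $M_j$, introduces telescoping differences $B_j=\sum_{E\subseteq\nuset}(-1)^{|E|}A_{j_{E^c}}M_{j_E}$ (so that the inductive hypothesis in $\nu$ handles all $M_{j_E}$ with $E\subsetneq\nuset$), and reduces to bounding $\sup_j|B_j f|$. The decay $\LpOpN{2}{B_jD_k}\lesssim 2^{-\epsilon|j-k|}$ (Theorem \ref{ThmL2Thm}, via the $L^2$ almost-orthogonality machinery) combines with the Littlewood--Paley square function and the Calder\'on-type reproducing formula (Section \ref{SectionSquareFunc}) to give the $L^2$ case; a bootstrap (Lemma \ref{LemmasPProp}), interpolating $\ell^2$-valued $L^p$ decay estimates against an $\ell^\infty$-valued $L^q$ estimate that itself uses the inductive boundedness of the $\sM_E$ and of $\sM_{\nuset}$ on $L^q$, then pushes from $p=2$ down to all $p>1$. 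Your $L^\infty$ observation is fine, and the single-parameter maximal operators $\sM^\mu$ do appear as auxiliary tools in the paper (via Theorem \ref{ThmSingleParamMax}), but only to control the operators $A_{j_E}$ and $D_j$, not to dominate $\sM$ pointwise. As written, your argument contains a genuine gap at the factorization step, and without replacing it you have not proved the theorem.
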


The precise statement of our main result (where more general
kernels are considered) can be
found in Theorems \ref{ThmMainThmFirstPass}, \ref{ThmMainThmSecondPass}, and \ref{ThmMainMaxThm}.

\begin{rmk}
In \cite{ChristNagelSteinWaingerSingularAndMaximalRadonTransforms}, the main conditions were stated in terms of slightly different vector fields.
Namely, it was shown that $\gamma$ could be written in the form 
\begin{equation*}
\gamma_t\q(x\w)\sim \exp\q(\sum_{\alpha\ne 0} t^{\alpha} \Xh_\alpha\w)x,
\end{equation*}
where $\Xh_\alpha$ are $C^\infty$ vector fields and the above notation means $\gamma_t\q(x\w) = \exp\q(\sum_{0<\q|\alpha\w|<L} t^{\alpha} \Xh_\alpha\w)x+O\q(\q|t\w|^L\w)$, $\forall L$.  One can easily solve for the $\Xh_\alpha$ in terms of the $X_\alpha$ and {\it vice versa}.  In fact, one can equivalently state our theorems
by replacing $X_\alpha$ with $\Xh_\alpha$, throughout.  Nevertheless, we still need to introduce the vector field $W$, in order to state \eqref{EqnIntroWInsD}.  The connection
between the $\Xh_\alpha$ and the $X_\alpha$ is spelled out in more detail in Section 8 of \cite{StreetMultiParameterSingRadonAnal}.
See also Section 9 of \cite{StreetMultiParameterSingRadonLt}.
\end{rmk}

\begin{rmk}
For several examples of $\gamma$ which satisfy our hypotheses and several examples which do not, the reader is referred to Section 17 of \cite{StreetMultiParameterSingRadonLt}.
\end{rmk}


\section{Kernels}\label{SectionKernels}
In this section, we will discuss the classes of kernels
$K\q( t\w)$ for which we will study operators
of the form \eqref{EqnIntroOp}.
The kernels which we study will be supported in
$\Q^N\q( a\w)=\q\{x\in \R^N: \q|x\w|<a\w\}$, where $a>0$ is some small number
to be chosen later (depending on $\gamma$).
Fix $\nu\in \N$, we will be studying $\nu$ parameter
operators.  Fix $1\leq \mu_0\leq \nu$, and define,
\begin{equation*}
\sA_{\mu_0} = \q\{\delta=\q( \delta_1,\ldots, \delta_\nu\w)\in \q[0,1\w]^\nu: \delta_{\mu_0}\leq \delta_{\mu_0+1}\leq \cdots\leq \delta_{\nu}\w\}.
\end{equation*}
The class of kernels we define will depend on $\mu_0$ (by
depending on $\sA_{\mu_0}$).
In \cite{StreetMultiParameterSingRadonLt}, the class of kernels
depended on a subset $\sA\subseteq\q[0,1\w]^\nu$.  In that
paper, one could use any subset $\sA$ such that if $\delta_1,\delta_2\in \sA$,
then $\delta_1\vee \delta_2\in \sA$ (where
$\delta_1\vee \delta_2$ is given by taking the coordinatewise maximum
of $\delta_1$ and $\delta_2$).  In this paper, we
restrict our attention to $\sA$ of the form $\sA_{\mu_0}$ for
some $\mu_0$.\footnote{Actually, this is not the most general
form of $\sA$ that can be handled by our methods.  See Section \ref{SectionMoreKernels}
for further details.}
This is the only difference between the setting in
\cite{StreetMultiParameterSingRadonLt}
and the setting in this paper.
Notice, $\sA_{\nu}=\q[0,1\w]^\nu$ and $\sA_{1}=\q\{\delta_1\leq\delta_2\leq\cdots\leq \delta_\nu\w\}$; these make up the principle examples
we are interested in.

We suppose we are given $\nu$-parameter dilations on $\R^N$.  That is, 
we are
given $e=\q( e_1,\ldots, e_N\w)$, with each $0\ne e_j= \q( e_j^1,\ldots, e_j^{\nu}\w)\in \q[0,\infty\w)^\nu$.
For $\delta\in \q[0,\infty\w)^\nu$ and $t=\q( t_1,\ldots, t_N\w)\in \R^N$,
we define,\footnote{$\delta^{e_j}$ is defined by standard multi-index
notation: $\delta^{e_j}= \prod_{\mu} \delta_\mu^{e_j^\mu}$.}
\begin{equation}\label{EqnDefndeltat}
\delta t = \q( \delta^{e_1}t_1,\ldots, \delta^{e_{N}}t_N \w),
\end{equation}
thereby obtaining $\nu$-parameter dilations on $\R^N$.
For each $\mu$, $1\leq \mu\leq \nu$, let $t_\mu$ denote those coordinates
$t_j$
of $t=\q(t_1,\ldots, t_N \w)\in \R^N$ such that $e_j^\mu\ne 0$.
For $j=\q(j_1,\ldots, j_\nu\w)\in \Z^\nu$, define $2^j=\q(2^{j_1},\ldots, 2^{j_\nu}\w)$.

The class of distributions we will define depends on 
$N$, $a$, $e$, $\mu_0$, and $\nu$. 
Define,
\begin{equation*} 
\lA_{\mu_0}=\q\{j\in \N^\nu : 2^{-j}\in \sA_{\mu_0}\w\} = \q\{j\in \N^\nu : j_{\mu_0}\geq j_{\mu_0+1}\geq \cdots \geq j_{\nu}\w\}.
\end{equation*}
Given a function $\vsig$ on $\R^N$, and $j\in \N^\nu$, define,
\begin{equation*}
\dil{\vsig}{2^j}\q( t\w) = 2^{j\cdot e_1+\cdots + j\cdot e_N} \vsig\q( 2^j t\w).
\end{equation*}
Note that $\dil{\vsig}{2^j}$ is defined in such a way that,
\begin{equation*}
\int \dil{\vsig}{2^j} \q( t\w) \: dt = \int \vsig\q( t\w) \: dt.
\end{equation*}

\begin{defn}\label{DefnsK}
We define $\sK=\sK\q( N,e,a,\mu_0,\nu\w)$ to be the set of all distributions,
$K$, of the form
\begin{equation}\label{EqnSumDefK}
K=\sum_{j\in \lA_{\mu_0}} \dil{\vsig_j}{2^j},
\end{equation}
where $\q\{\vsig_j\w\}_{j\in\lA_{\mu_0}}\subset C_0^\infty\q( \Q^N\q( a\w) \w)$ is a bounded
set, satisfying
\begin{equation*}
\int \vsig_j\q( t\w) \: dt_\mu =0,
\end{equation*}
unless $0=j_\mu$ or $\mu>\mu_0$ and $j_{\mu}=j_{\mu-1}$.
It was shown in \cite{StreetMultiParameterSingRadonLt} that
any sum of the form \eqref{EqnSumDefK} converges in the
sense of distributions.
\end{defn}

See \cite{StreetMultiParameterSingRadonLt} for a more in-depth
discussion of the class $\sK$.

\begin{rmk}
If each $e_j$ is non-zero in only one component,
kernels in $\sK\q(N,e,a,\nu,\nu\w)$ are known as ``product kernels,''
and kernels in $\sK\q(N,e,a,1,\nu\w)$ are known as ``flag kernels.''
The classes we study here are more general:  we do not insist
that each $e_j$ are nonzero in only one component, and we allow
for any $1\leq\mu_0\leq \nu$.
For an illustrative example where the $e_j$ are not non-zero
in only one component, see Section 17.8 of \cite{StreetMultiParameterSingRadonLt}.
For more information on product and flag kernels see, e.g.,  \cite{NagelRicciSteinSingularIntegralsWithFlagKernels}.
\end{rmk}

\section{Multi-parameter Carnot-Carath\'eodory geometry}\label{SectionCCGeom}
At the heart of the definition of the class of $\gamma$
which we will study lies multi-parameter Carnot-Carath\'eodory
geometry.  Thus, before we can even define the class
of $\gamma$, it is necessary to review the relevant
definitions of multi-parameter Carnot-Carath\'eodory balls.
We defer the theorems we will use to deal with these balls
to Section \ref{SectionCCGeomII}.
Our main reference for Carnot-Carath\'eodory geometry is \cite{StreetMultiParameterCCBalls},
and we refer the reader there for a more detailed discussion.

Let $\Omega\subseteq \R^n$ be a fixed open set, and suppose
$X_1,\ldots, X_q$ are $C^\infty$ vector fields on $\Omega$.
We define the Carnot-Carath\'eodory ball of unit radius, centered
at $x_0\in \Omega$, with respect to the list $X$ by
\begin{equation*}
\begin{split}
B_X\q( x_0\w):=\bigg\{y\in \Omega \:\bigg|\: &\exists \gamma:\q[0,1\w]\rightarrow \Omega, \gamma\q( 0\w) =x_0, \gamma\q( 1\w) =y,  \\
&\gamma'\q( t\w) = \sum_{j=1}^q a_j\q( t\w) X_j\q( \gamma\q( t\w)\w),
 a_j\in L^\infty\q( \q[0,1\w]\w), \\
&\Lppn{\infty}{\q[0,1\w]}{\q(\sum_{1\leq j\leq q} \q|a_j\w|^2\w)^{\frac{1}{2}}}<1\bigg\}.
\end{split}
\end{equation*}
Now that we have the definition of balls with unit radius, we may define
(multi-parameter) balls of any radius merely by scaling the vector fields.
To do so, we assign to each vector field, $X_j$, a (multi-parameter) formal
degree $0\ne d_j=\q(d_j^1,\ldots, d_j^\nu \w)\in \q[0,\infty\w)^\nu$.  For $\delta=\q(\delta_1,\ldots, \delta_\nu \w)\in \q[0,\infty\w)^\nu$,
we define the list of vector fields $\delta X$ to be the list
$\q( \delta^{d_1} X_1,\ldots, \delta^{d_q}X_q\w)$.  Here, $\delta^{d_j}$
is defined by the standard multi-index notation:  $\delta^{d_j} =\prod_{\mu=1}^\nu \delta_\mu^{d_j^\mu}$.
We define the ball of radius $\delta$ centered at $x_0\in \Omega$ by
$$\B{X}{d}{x_0}{\delta} := B_{\delta X}\q( x_0\w). $$

At times, it will be convenient to assume that the ball $\B{X}{d}{x_0}{\delta}$
lies ``inside'' of $\Omega$.  To this end, we make the following
definition.
\begin{defn}
Given $x_0\in \Omega$ and $\Omega'\subseteq \Omega$, we say the list of vector fields $X$ satisfies $\sC\q( x_0,\Omega'\w)$
if for every $a=\q( a_1,\ldots, a_q\w) \in \q( L^{\infty}\q( \q[0,1\w]\w)\w)^q$,
with
$$\Lppn{\infty}{\q[0,1\w]}{\q|a\w|}=\Lppn{\infty}{\q[0,1\w]}{\q(\sum_{j=1}^q \q|a_j\w|^2\w)^{\frac{1}{2}}}<1,$$
there exists a solution $\gamma:\q[0,1\w]\rightarrow \Omega'$ to the
ODE
\begin{equation*}
\gamma'\q( t\w) =\sum_{j=1}^q a_j\q( t\w) X_j\q( \gamma\q( t\w)\w), \quad \gamma\q( 0\w) =x_0.
\end{equation*}
Note, by Gronwall's inequality, when this solution exists, it is unique.
Similarly, we say $\q( X,d\w)$ satisfies $\sC\q( x_0,\delta,\Omega'\w)$
if $\delta X$ satisfies $\sC\q(x_0,\Omega'\w)$.
\end{defn}

One of the main points of \cite{StreetMultiParameterCCBalls} was to
provide a detailed study of the balls $\B{X}{d}{x_0}{\delta}$, under
appropriate conditions on the list $\q( X,d\w)$.  To do this, we first
need to pick a subset $\sA\subseteq \q[0,1\w]^\nu$, and
a compact set $\K\Subset \Omega$.  We will
(essentially) be restricting our attention to those balls
$\B{X}{d}{x_0}{\delta}$ such that $x_0\in \K$ and $\delta\in \sA$.
One should think of $\sA=\sA_{\mu_0}$ for some $\mu_0$, as that
case will be the primary one used in this paper.

\begin{defn}\label{DefnsD}
We say $\q( X,d\w)$ satisfies $\sD\q(\K,\sA\w)$ if the following holds:
\begin{itemize}
\item Take
$\Omega'$ with $\K\Subset \Omega'\Subset \Omega$ and
$\xi>0$ such that for every $\delta\in \sA$ and $x\in \K$, $\q( X,d\w)$ satisfies
$\sC\q( x, \xi\delta, \Omega'\w)$.  

\item For every $\delta\in \sA$
and $x\in \K$, we assume
\begin{equation*}
\q[\delta^{d_i} X_i, \delta^{d_j} X_j\w] = \sum_k c_{i,j}^{k,\delta,x} \delta^{d_k} X_k, \text{ on } \B{X}{d}{x}{\xi\delta}.
\end{equation*}

\item For every ordered multi-index $\alpha$ we assume\footnote{We write $\CjN{f}{0}{U}=\sup_{x\in U} \q|f\q(x\w)\w|$, and if we say the norm is finite, we mean (in addition) that $f$ is continuous on $U$.}
\begin{equation*}
\sup_{\substack{\delta\in \sA\\ x\in \K } } \CjN{\q(\delta X\w)^\alpha c_{i,j}^{k,x,\delta}}{0}{\B{X}{d}{x}{\xi\delta}}<\infty.
\end{equation*}
\end{itemize}
If we wish to be explicit about $\Omega'$ and $\xi$, we write $\sD\q( \K,\sA, \Omega', \xi\w)$.
\end{defn}

It is under condition $\sD\q( \K, \sA\w)$ that the balls $\B{X}{d}{x}{\delta}$
were studied in \cite{StreetMultiParameterCCBalls}.
We refer the reader to Section \ref{SectionCCGeomII} for an overview of
the theorems from \cite{StreetMultiParameterCCBalls} that
we shall use.

In what follows, we will not be directly given a list of
vector fields with formal degrees satisfying $\sD\q( \K, \sA\w)$,
\begin{equation*}
\q( X_1,d_1\w), \ldots, \q( X_q,d_q\w),
\end{equation*}
but, rather,  we will be given a list of $C^\infty$ vector fields with
formal degrees which we will assume to ``generate'' such a list.

To understand this, let $\q( X_1,d_1\w),\ldots, \q( X_r,d_r\w)$ be
$C^\infty$ vector fields with associated formal degrees
$0\ne d_j\in \q[0,\infty\w)^\nu$.
For a list $L=\q( l_1,\ldots, l_m\w)$ where $1\leq l_j\leq r$, we define,
\begin{equation*}
\begin{split}
X_L &= \ad{X_{l_1}}\ad{X_{l_2}}\cdots \ad{X_{l_{m-1}}} X_{l_m},\\
d_L &= d_{l_1}+d_{l_2}+\cdots +d_{l_m}.
\end{split}
\end{equation*}
We define $\sS=\q\{\q( X_L,d_L\w) : L \text{ is any such list}\w\}.$

\begin{defn}\label{DefnGeneratesAFiniteList}
We say $\sS$ is {\it finitely generated}
 or that $\q( X_1,d_1\w), \ldots, \q( X_r,d_r\w)$ {\it generates a finite list} if there exists finite subset,
$\sF\subseteq \sS$, such that $\sF$ satisfies $\sD\q( \K,\sA\w)$\footnote{Here,
we are thinking of $\K$ and $\sA$ fixed.}
 and
\begin{equation*}
\q( X_j, d_j\w)\in \sF, \quad 1\leq j\leq r.
\end{equation*}
If we enumerate the vector fields in $\sF$,
\begin{equation*}
\sF=\q\{\q(X_1,d_1\w),\ldots, \q( X_q,d_q\w) \w\},
\end{equation*}
we say that $\q( X_1,d_1\w),\ldots, \q( X_r,d_r\w)$ {\it generates
the finite list} $\q( X_1,d_1\w) ,\ldots, \q( X_q,d_q\w)$.
Note that, if $\sS$ is finitely generated, $\q( X_1,d_1\w),\ldots \q( X_r,d_r\w)$ could generate
many different finite lists.
However, if we let $\q( X,d\w)$ and $\q( X',d'\w)$ be two different such lists
then
either choice will work for our purposes.
In fact, it is shown in \cite{StreetMultiParameterSingRadonLt}
that $\q( X,d\w)$ and $\q(X',d'\w)$ are {\it equivalent}
in a sense that is made precise and discussed at length
in that paper.
It follows that, in every place we use these notions, it will not make
a difference which finite list we use.
Thus, we will unambiguously
say ``$\q( X_1,d_1\w) ,\ldots, \q(X_r,d_r\w)$ generates
the finite list $\q( X_1,d_1\w) ,\ldots, \q( X_q,d_q\w)$,''
to mean that $\q( X_1,d_1\w), \ldots, \q(X_r,d_r\w)$ generates
a finite list and $\q( X_1,d_1\w),\ldots, \q( X_q,d_q\w)$ can
be any such list.
\end{defn}

\section{Surfaces}\label{SectionCurves}
In this section, we define the class of $\gamma$ for which
we will study operators of the form \eqref{EqnIntroOp}.
This is nothing but a reprise of the definitions in
\cite{StreetMultiParameterSingRadonLt}, and we
refer the reader there for more details.

We assume we are given an open subset $\Omega\subseteq \R^N$, a 
fixed $\mu_0$, $1\leq \mu_0\leq \nu$, 
and dilations $e$
as in Section \ref{SectionKernels}.  

\begin{defn}
Given a multi-index $\alpha\in \N^N$, we define 
$$\deg\q(\alpha\w)=\sum_{j=1}^N \alpha_j e_j\in \q[0,\infty\w)^\nu.$$
\end{defn}

Let $\K\Subset\Omega'\Subset \Omega''\Subset \Omega$ be subsets of $\Omega$ 
with $\K$ compact and $\Omega'$ and $\Omega''$ open but relatively compact
in $\Omega$.
Our goal in this section is to define a class of $C^\infty$ functions
$$\gamma\q( t,x\w) : \Q^N\q(\rho\w) \times \Omega''\rightarrow \Omega$$
such that $\gamma\q( 0,x\w)=x$.
Here $\rho>0$ is a small number.
This class of functions will depend on $\mu_0$, $N$, $e$, and $\Omega$
(nominally, the class will also depend on $\K$, $\Omega'$, and $\Omega''$, but 
this will not be an essential point).
This class will be such that if $\psi$ is a $C_0^\infty$
function supported in the interior of $\K$, then there is an $a>0$ sufficiently
small such that the operator given by \eqref{EqnIntroOp}
is bounded on $L^p$ ($1<p<\infty$) for every $K\in \sK\q( N,e,a,\mu_0,\nu\w)$. 

Note, by possibly shrinking $\rho>0$ we may assume that, for each $t\in \Q^N\q( \rho\w)$, $\gamma\q( t, \cdot\w)\big|_{\Omega'}$ is a diffeomorphism (in the $x$-variable) onto its image.
From now on we will assume this.  Also, as in the introduction,
we will write $\gamma_t\q( x\w)= \gamma\q( t,x\w)$.

Unlike the work in \cite{ChristNagelSteinWaingerSingularAndMaximalRadonTransforms},
we separate the condition on $\gamma_t$ into two aspects.
For the first, suppose we are given a list of $C^\infty$ vector fields
on $\Omega'$, $X_1,\ldots, X_q$, with associated $\nu$-parameter formal
degrees, $d_1,\ldots d_q$, satisfying $\sD\q( \K, \sA_{\mu_0}, \Omega',\xi\w)$
for some $\xi>0$ (we will see later where these vector fields will come from).
We denote the list $\q(X_1,d_1\w),\ldots,\q(X_q,d_q\w)$ by $\q(X,d\w)$.

\begin{defn}\label{DefnControl}
Suppose we are given a $C^\infty$ vector field on $\Omega'$, 
depending smoothly on $t\in \Q^N\q( \rho\w)$, $W\q( t,x\w)\in T_x\Omega'$.
We say $\q( X,d\w)$
{\it controls} $W\q( t,x\w)$ if there exists a $\rho_1\leq \rho$
and $\tau_1\leq \xi$ such that for every $x_0\in \K$, $\delta\in \sA_{\mu_0}$
there exist functions $c_l^{x_0,\delta}$ on $\Q^N\q( \rho_1\w)\times \B{X}{d}{x_0}{\tau_1\delta}$ satisfying
\begin{itemize}
\item $W\q( \delta t,x\w) = \sum_{l=1}^q c_l^{x_0,\delta}\q( t,x\w) \delta^{d_l} X_l\q( x\w)$ on 
$\Q^N\q( \rho_1\w)\times \B{X}{d}{x_0}{\tau_1\delta}$, 
where $\delta t$ is defined as in \eqref{EqnDefndeltat}.
\item $\sup_{\substack{x_0\in \K\\\delta\in \sA_{\mu_0} } }\sum_{\q|\alpha\w|+\q|\beta\w|\leq m} \CjN{\q(\delta X\w)^{\alpha} \partial_t^\beta c_l^{x_0,\delta}}{0}{\Q^N\q( \rho_1\w)\times \B{X}{d}{x_0}{\tau_1\delta}}<\infty,$ for every
$m$.
\end{itemize}
\end{defn}

\begin{defn}\label{DefnControlCurve}
We say $\q( X,d\w)$ {\it controls} $\gamma_t\q( x\w)$ if 
$\q( X,d\w)$ controls $W$ where
\begin{equation*}
W\q(t,x\w)=\frac{d}{d\epsilon}\bigg|_{\epsilon=1} \gamma_{\epsilon t}\circ \gamma_t^{-1} \q( x\w).
\end{equation*}
Here, $\epsilon\q( t_1,\ldots, t_N\w) = \q( \epsilon t_1,\ldots, \epsilon t_N\w)$,
and so is unrelated to the dilations $e$.
\end{defn}

Part of our assumption will be that a particular family of vector fields 
$\q( X,d\w)$ controls $\gamma_t$.  Where these vector fields come from
constitutes the other part of our assumption on $\gamma$.

Let $W$ be as in Definition \ref{DefnControlCurve}.  Let
$X_{\alpha}\q( x\w)$ be the Taylor coefficients of
$W$ when the Taylor series is taken in the $t$ variable:
\begin{equation}\label{EqnDefnXjalpha}
W\q( t,x\w) \sim \sum_{\q|\alpha\w|>0} t^\alpha X_{\alpha}\q( x\w),
\end{equation}
so that $X_{\alpha}$ is a $C^\infty$ vector field on $\Omega'$.

Our assumption on $\gamma_t$ is that if we take the set of vector fields
with formal degrees:
\begin{equation}\label{EqnCurvesDefnS}
\sS=\q\{\q(X_{\alpha},\deg\q(\alpha\w) \w):\deg\q( \alpha\w)\text{ is non-zero in only one component}\w\},
\end{equation}
then there is a finite subset $\sF\subseteq \sS$ such that
$\sF$ generates a finite list $\q( X,d\w) = \q( X_1,d_1\w),\ldots, \q(X_q,d_q\w)$
and this finite list controls $\gamma_t$.


\begin{rmk}
The list of vector fields $\q( X,d\w)$ depends on a few choices we have
made in the above:  it depends on the chosen finite subset $\sF$
and it depends on the chosen list generated by $\sF$.  However,
neither of these choices affects $\q( X,d\w)$ in an essential way.
This is discussed in detail in \cite{StreetMultiParameterSingRadonLt}.
\end{rmk}


\section{Statement of Results}
Fix $\Omega\subseteq \R^n$ open, and $\K\Subset \Omega'\Subset\Omega''\Subset\Omega$
with $\K$ compact (with nonempty interior) and $\Omega'$ and $\Omega''$
open but relatively compact in $\Omega$.
Let
\begin{equation*}
\gamma\q( t,x\w):\Q^N\q( \rho\w)\times \Omega''\rightarrow \Omega
\end{equation*}
be a $C^\infty$ function such that $\gamma\q( 0,x\w) \equiv x$.
Here, $\rho>0$ is a small number.

Fix $\nu\in \N$ positive, and $\mu_0$, $1\leq \mu_0\leq \nu$.
Furthermore, let $e=\q( e_1,\ldots, e_N\w)$ be given, with $0\ne e_j\in \q[0,\infty\w)^\nu$.
{\bf We suppose $\gamma$ satisfies the assumptions of Section \ref{SectionCurves}
with this $\K$, $\mu_0$, and $e$.}

\begin{thm}\label{ThmMainThmFirstPass}
For every $\psi\in C_0^\infty\q( \R^n\w)$ supported in the interior
of $\K$, there exists $a>0$ such that for every $K\in \sK\q( N,e,a,\mu_0,\nu\w)$
the operator
\begin{equation*}
f\mapsto \psi\q( x\w) \int f\q( \gamma_t\q( x\w) \w)K\q( t\w) \: dt
\end{equation*}
extends to a bounded operator $L^p\q( \R^n\w)\rightarrow L^p\q( \R^n\w)$,
for every $1<p<\infty$.
\end{thm}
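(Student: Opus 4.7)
The plan is to pass from the $L^2$ bound supplied by \cite{StreetMultiParameterSingRadonLt} to $L^p$ by combining almost-orthogonality with multi-parameter square-function and maximal estimates. As the remark emphasizes, no multi-parameter Calder\'on-Zygmund theory is available \emph{a priori}, so the entire $L^2\to L^p$ bridge must be built from these two ingredients, in the spirit of \cite{FeffermanSteinSingularIntegralsOnProductSpaces,NagelSteinOnTheProductTheoryOfSingularIntegrals}.

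First I would use Definition \ref{DefnsK} to decompose $K=\sum_{j\in\lA_{\mu_0}}\dil{\vsig_j}{2^j}$ and correspondingly $T=\sum_{j\in\lA_{\mu_0}} T_j$, where
\begin{equation*}
T_j f(x) = \psi(x)\int f\q(\gamma_t(x)\w)\,\dil{\vsig_j}{2^j}(t)\,dt.
\end{equation*}
Each $T_j$ is a smooth averaging operator at multi-parameter scale $2^{-j}$, and the mean-zero conditions on $\vsig_j$ provide cancellation in exactly the directions needed to feed into the $L^2$ theory. I would then invoke the main $L^2$ theorem of \cite{StreetMultiParameterSingRadonLt} in a quantitative, piece-by-piece form to obtain almost-orthogonality estimates
\begin{equation*}
\LtOpN{T_j^\ast T_k}+\LtOpN{T_j T_k^\ast}\lesssim 2^{-\epsilon\q|j-k\w|_\ast},\qquad j,k\in\lA_{\mu_0},
\end{equation*}
for some $\epsilon>0$ and a suitable multi-parameter distance $\q|\cdot\w|_\ast$ on $\lA_{\mu_0}$. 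These bounds play the role that Calder\'on-Zygmund kernel estimates would play in the single-parameter setting.

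In parallel, I would prove Theorem \ref{ThmMainMaxThm} for the maximal operator $\sM$ on $L^p$ ($1<p\leq\infty$) by a Vitali-type covering argument on the Carnot-Carath\'eodory balls $\B{X}{d}{x}{\delta}$, using the doubling and engulfing properties developed in \cite{StreetMultiParameterCCBalls}, iterated across the $\nu$ parameters in the familiar strong-maximal fashion. The uniform pointwise bound $|T_j f(x)|\lesssim \sM f(x)$, which is immediate from the definition of $T_j$ and the boundedness of $\q\{\vsig_j\w\}$, combined with the Fefferman-Stein vector-valued maximal inequality, then yields $L^p(\ell^2)$ control of $\q\{T_j f\w\}_j$.

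The heart of the proof is to assemble these ingredients. I would introduce a multi-parameter Littlewood-Paley resolution $\sum_k P_k = I$ on the input, adapted to the dilations $e$ and the cone $\lA_{\mu_0}$, and decompose $T=\sum_{j,k} T_j P_k$. For $\q|j-k\w|_\ast$ large, the $L^2$ almost-orthogonality above, interpolated against the pointwise maximal bound, gives rapid decay of $\LpOpN{p}{T_j P_k}$; for $\q|j-k\w|_\ast$ bounded, the diagonal piece is controlled via
\begin{equation*}
\Bigl\|\Bigl(\sum_j |T_j P_j f|^2\Bigr)^{1/2}\Bigr\|_{L^p}\lesssim \Bigl\|\Bigl(\sum_j |P_j f|^2\Bigr)^{1/2}\Bigr\|_{L^p}\lesssim \|f\|_{L^p},
\end{equation*}
where the first inequality uses the vector-valued Fefferman-Stein inequality applied to $|T_j P_j f|\lesssim \sM(P_j f)$, and the second is standard Littlewood-Paley. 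The main obstacle will be this last step: constructing a Littlewood-Paley machine intrinsic to the geometry of $\gamma$ (rather than to the Euclidean $t$-variable alone) and verifying that it meshes both with the $L^2$ off-diagonal decay and with the maximal bound. This is precisely where the absence of a Calder\'on-Zygmund theory forces the argument to lean entirely on the square-function/maximal machinery.
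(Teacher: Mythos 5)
Your high-level picture — decompose $T=\sum T_j$, use $L^2$ almost-orthogonality from the first paper, control off-diagonal pieces by interpolating $L^2$ decay against $L^p(\ell^\infty)$ maximal bounds, and control the diagonal by a geometric square function — is the same blueprint the paper follows.  But there are two genuine gaps.

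First, the maximal theorem cannot be proved ``in parallel'' by a Vitali-type covering argument iterated across the $\nu$ parameters.  The multi-parameter balls $\B{X}{d}{x}{\delta}$ for $\delta\in\q[0,1\w]^\nu$ do not form a space of homogeneous type, and (unlike Euclidean rectangles) do not factor as products of the one-parameter balls $\B{X^\mu}{d^\mu}{x}{\delta_\mu}$, so the classical strong-maximal iteration is not available.  Even in the special case where $\gamma_t$ is an exponential map (Section \ref{SectionSingularIntegrals}), an inequality of the form $\sMt\lesssim(\sMt_1\cdots\sMt_\nu)^M$ is ``lengthy and technical'' and does not extend to general $\gamma$.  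The paper instead proves Theorem \ref{ThmMainMaxThm} by a bootstrap \`a la \cite{NagelSteinWaingerDifferentiationInLacunaryDirections}: after an induction on $\nu$ reducing to the commutator operators $B_j$, one feeds $L^2$ almost-orthogonality (Theorem \ref{ThmL2Thm}), the reproducing formula (Theorem \ref{ThmReproduce}), the square function (Theorem \ref{ThmSquare}), and an $\ell^\infty$ maximal bound into the interpolation scheme of Lemma \ref{LemmasPProp}, bootstrapping from $p=2$ down.  This means the maximal theorem is not an independent input to the singular-integral proof; it is entangled with the same square-function machinery, and the order of logic in your proposal is circular unless you rebuild it this way.

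Second, you correctly identify ``constructing a Littlewood-Paley machine intrinsic to the geometry of $\gamma$'' as the main obstacle, but you do not say how to build it.  The paper's resolution is the key structural idea: the operators $D_j^\mu$ are \emph{themselves} single-parameter exponential-map Radon transforms $f\mapsto\int f(e^{t\cdot X^\mu}x)\,\phi_{\mu,j}(t)\,dt$ built from the ``pure-power'' vector fields $X^\mu$, and their $L^p$ square-function estimates (Lemma \ref{LemmaSquareLessf}) are obtained by applying the classical Calder\'on-Zygmund theory \emph{leaf-by-leaf} along the Carnot-Carath\'eodory foliation (Sections \ref{SectionCZSpan}--\ref{SectionCZNoSpan}), using the coordinate maps $\Phi$ of Theorem \ref{ThmMainCCThm} to rescale.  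This leaf-wise CZ step is where a Calder\'on-Zygmund theory is ultimately smuggled back in, and it is essential for both \eqref{EqnfLessSquare}--\eqref{EqnSquareLessf} and for the reproducing formula $\denum{\psi}{-2}U_MV_M=\denum{\psi}{-2}$.  A final structural point you omit: the paper first reduces to the operator class of Theorem \ref{ThmMainThmSecondPass}, which is closed under $L^2$ adjoints, so that it suffices to treat $1<p\le 2$; without that reduction the interpolation scheme $\{L^p(\ell^1),L^p(\ell^2),L^p(\ell^\infty)\}$ only gives half the range.
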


Actually, as shown in \cite{StreetMultiParameterSingRadonLt},
Theorem \ref{ThmMainThmFirstPass} follows directly from the following, slightly more general,
theorem.

\begin{thm}\label{ThmMainThmSecondPass}
There exists $a>0$ such that for every $\psi_1,\psi_2\in C_0^\infty\q( \R^n\w)$
supported on the interior of $\K$, every $K\in \sK\q( N,e,a,\mu_0,\nu\w)$
and every $C^\infty$ function
\begin{equation*}
\kappa\q( t,x\w):\Q^N\q( a\w)\times \Omega''\rightarrow \C
\end{equation*}
the operator
\begin{equation*}
T f\q( x\w)= \psi_1\q( x\w) \int f\q( \gamma_t\q( x\w) \w) \psi_2\q( \gamma_t\q( x\w)\w) \kappa\q( t,x\w) K\q( t\w) \: dt
\end{equation*}
extends to a bounded operator $L^p\q( \R^n\w) \rightarrow L^p\q( \R^n\w)$
for every $1<p<\infty$.
\end{thm}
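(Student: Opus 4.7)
Using Definition \ref{DefnsK}, decompose the kernel as
$$K = \sum_{j \in \lA_{\mu_0}} \dil{\vsig_j}{2^j},$$
which yields a decomposition $T = \sum_j T_j$, with
$$T_j f(x) = \psi_1(x) \int f\q(\gamma_t(x)\w) \psi_2\q(\gamma_t(x)\w) \kappa(t,x)\, \dil{\vsig_j}{2^j}(t)\: dt.$$
The plan has three ingredients: a Cotlar--Stein style $L^2$ almost-orthogonality estimate for $\q\{T_j\w\}$ inherited from the $L^2$ theory of \cite{StreetMultiParameterSingRadonLt}; the $L^p$ boundedness of the multi-parameter maximal operator $\sM$ (Theorem \ref{ThmMainMaxThm}); and a Littlewood--Paley / vector-valued maximal argument, in the spirit of \cite{FeffermanSteinSingularIntegralsOnProductSpaces,NagelSteinOnTheProductTheoryOfSingularIntegrals}, that combines these into an $L^p$ bound on $T$.

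\textbf{Almost-orthogonality and Littlewood--Paley.} First I would establish a decay estimate
$$\LtOpN{T_j T_k^*} + \LtOpN{T_j^* T_k} \lesssim 2^{-\epsilon \|j-k\|}, \qquad j,k \in \lA_{\mu_0},$$
for some $\epsilon>0$ and an appropriate norm on the index set. The cancellation conditions built into Definition \ref{DefnsK} --- in particular $\int \vsig_j\: dt_\mu = 0$ unless $j_\mu=0$ or ($\mu>\mu_0$ and $j_\mu=j_{\mu-1}$) --- are designed precisely so that the composition $T_j T_k^*$ permits repeated integration by parts against the smooth diffeomorphism $\gamma_t$ and produces such decay; the main $L^2$ theorem of \cite{StreetMultiParameterSingRadonLt} is the black box that makes this rigorous. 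In parallel I would construct a multi-parameter Littlewood--Paley partition $\mathrm{Id} = \sum_{k \in \lA_{\mu_0}} \Dt_k D_k$ adapted to the dilations $e$ and the index set $\lA_{\mu_0}$, so that iterated Littlewood--Paley theory (applied one parameter at a time, respecting the ordering encoded in $\lA_{\mu_0}$) yields
$$\LpN{p}{f} \sim \Lppn{p}{\R^n}{\q(\sum_k \q|D_k f\w|^2\w)^{1/2}}, \qquad 1 < p < \infty.$$

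\textbf{$L^p$ bound via the square function.} Using the reproducing formula I would then write $Tf = \sum_m U_m f$ with $U_m = \sum_j T_j \Dt_{j-m} D_{j-m}$, and show that each $U_m$ is bounded on $L^p$ with norm decaying geometrically in $|m|$. The near-diagonal piece $U_0$ is controlled via the pointwise domination $|T_j g(x)| \lesssim \sM g(x)$ (immediate from the fact that $\q\{\vsig_j\w\}$ is a bounded subset of $C_0^\infty\q(\Q^N\q(a\w)\w)$), combined with the vector-valued Fefferman--Stein inequality for $\sM$ (whose $\ell^2$-valued form follows from Theorem \ref{ThmMainMaxThm} by a standard linearization argument), together with the Littlewood--Paley characterization above. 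The off-diagonal pieces ($m \neq 0$) are handled by interpolating this universal $L^p$ estimate against the $L^2$ decay from the previous step.

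\textbf{Main obstacle.} The chief difficulty, as the authors flag in their remark, is the absence of any Calder\'on--Zygmund theory for the kernels in $\sK$: weak-$(1,1)$ bounds are not available, so one cannot interpolate as in the single-parameter theory of \cite{ChristNagelSteinWaingerSingularAndMaximalRadonTransforms}. The entire $L^p$ theory must therefore be pushed through the square-function route, and the crux is reconciling the two kinds of estimates on the $T_j$ --- orthogonality on the cancellation side and pointwise control on the size side --- without losing uniformity in $j \in \lA_{\mu_0}$. Carefully tracking how the nested cancellation conditions of Definition \ref{DefnsK} interact with the Carnot--Carath\'eodory control of $\gamma_t$ encoded in Definitions \ref{DefnControl}--\ref{DefnControlCurve} is what makes this nontrivial, and is what distinguishes the argument from the purely convolution product theory.
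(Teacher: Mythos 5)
Your overall architecture — decompose $T=\sum_j T_j$, build a Littlewood--Paley apparatus $D_j$ and a reproducing formula, use $L^2$ almost-orthogonality for the off-diagonal decay and a maximal-function bound for the size control, then interpolate — is indeed the spine of the paper's proof, which also first proves Theorem~\ref{ThmMainMaxThm} and then feeds it into the singular integral argument. But two of your specific steps are off the mark.

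First, the $L^2$ estimates you propose, namely decay of $\LtOpN{T_jT_k^{*}}$ and $\LtOpN{T_j^{*}T_k}$, are not the ones needed once you run a square function argument. After writing $T=TU_MV_M$, applying the square function characterization produces a $D_j$ on the \emph{left} of $T$, and the reproducing formula produces $D_{j+k_2}D_{j+k_2+l}$ on the \emph{right}; so what actually governs the off-diagonal decay is the two-sided sandwich $D_jT_{j+k_1}D_{j+k_2}$. The paper's Theorem~\ref{ThmL2Thm} supplies precisely $\LtOpN{D_{j_1}T_{j_2}D_{j_3}}\lesssim 2^{-\epsilon\,\diam{j_1,j_2,j_3}}$ (together with $\LtOpN{D_{j_1}D_{j_2}D_{j_3}^{*}D_{j_4}^{*}}$-type estimates, which are what make the square function and reproducing formula work). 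In your scheme, to get geometric decay of $\|U_m\|_{L^2\to L^2}$ you would need a mixed bound like $\LtOpN{T_jD_{j-m}}\lesssim 2^{-\epsilon|m|}$, which does not follow from $\LtOpN{T_jT_k^{*}}$ decay alone; you have proposed the wrong pairing.

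Second, and more seriously, your near-diagonal step leans on a ``vector-valued Fefferman--Stein inequality for $\sM$'' that you claim follows from Theorem~\ref{ThmMainMaxThm} ``by a standard linearization argument.'' There is no such standard argument: the Fefferman--Stein $\ell^2$-valued inequality for a maximal operator is not a formal consequence of its scalar $L^p$ bound, and for the geometric maximal operators here it would be a nontrivial theorem in its own right. The paper never proves or uses it. Instead it proves that the operator $\sT_{k_1,k_2}\{f_j\}=\{D_jT_{j+k_1}D_{j+k_2}f_j\}$ is bounded on $L^p(\ell^p)$ (trivially, by interpolating a trivial $L^1(\ell^1)$ bound with the $L^2(\ell^2)$ decay) and on $L^p(\ell^\infty)$ (via the pointwise domination $|D_jT_{j+k_1}D_{j+k_2}f_j|\lesssim [\prod\sM^\mu]\sM[\prod\sM^\mu]\sup_j|f_j|$ followed by scalar maximal bounds), then interpolates in the $\ell^q$-scale to land on $L^p(\ell^2)$ for $1<p\leq 2$, and finishes by self-adjointness of the class of operators for $p>2$. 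This route uses only the scalar maximal inequalities you already have and entirely avoids the Fefferman--Stein inequality, which is the point where your write-up has a genuine gap rather than a stylistic difference.

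A smaller remark: your decomposition $U_m=\sum_j T_j\Dt_{j-m}D_{j-m}$ has only a one-sided Littlewood--Paley sandwich. The square function estimate $\LpN{p}{g}\approx\LpN{p}{(\sum_j|D_jg|^2)^{1/2}}$ applied to $g=U_mf$ silently reintroduces the left $D_j$, so this is repairable, but as written your interpolation scheme does not close because the operator family you are interpolating is not the one your $L^2$ bounds control.
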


\begin{rmk}
We focus on the more general Theorem \ref{ThmMainThmSecondPass}.  The importance
of the form of the operator in Theorem \ref{ThmMainThmSecondPass} is that
the class of operators is closed under taking $L^2$ adjoints, which is not
true of the class of operators in Theorem \ref{ThmMainThmFirstPass}.
See Section 12.3 of \cite{StreetMultiParameterSingRadonLt}
for the proof of this.
\end{rmk}

We also study maximal functions.  Define, for $\psi_1,\psi_2\in C_0^\infty\q( \R^n\w)$,
supported on the interior of $\K$ with $\psi_1\geq 0$,
\begin{equation*}
\sM f\q( x\w) = \sup_{\delta\in \sA_{\mu_0}} \psi_1\q( x\w)\int_{\q|t\w|<a} \q|f\q( \gamma_{\delta t}\q( x\w) \w)\psi_2\q( \gamma_{\delta t}\q( x\w)\w)\w|\: dt,
\end{equation*}
where $a>0$ is a fixed, small real number (this should be considered
as the same $a$ as in Theorem \ref{ThmMainThmSecondPass}).
Then, we have,
\begin{thm}\label{ThmMainMaxThm}
Under the same assumptions as Theorem \ref{ThmMainThmSecondPass},
\begin{equation*}
\LpN{p}{\sM f}\lesssim \LpN{p}{f},
\end{equation*}
for every $1<p\leq \infty$.
\end{thm}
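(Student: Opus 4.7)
The $L^\infty$ bound is immediate since $\{|t|<a\}$ has finite measure and $\psi_1,\psi_2$ are bounded, so the main content is the case $1<p<\infty$. My plan is to dominate $\sM$ pointwise by a multi-parameter Carnot-Carath\'eodory ball maximal function $M_{CC}$ associated with the controlling list $(X,d)$, and then to invoke the $L^p$-boundedness of $M_{CC}$, which falls under the machinery developed in \cite{StreetMultiParameterCCBalls} for CC balls satisfying $\sD(\K,\sA_{\mu_0})$ and summarized in Section \ref{SectionCCGeomII}. The first step is to use Definition \ref{DefnControlCurve} and the control relation $W(\delta t,y)=\sum_l c_l^{x,\delta}(t,y)\,\delta^{d_l}X_l(y)$ to show that, for $a$ sufficiently small and all $x\in\K$, $\delta\in\sA_{\mu_0}$, $|t|<a$, the point $\gamma_{\delta t}(x)$ lies in $\B{X}{d}{x}{C\delta}$ with $C$ uniform in $\delta$: the curve $\epsilon\mapsto\gamma_{\epsilon\delta t}(x)$ is a sub-unit integral curve of $C\delta X$ by the control estimates. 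This reduces matters to studying averages of $|f|$ over a surface contained in the CC ball.

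The second step is an iterated Radon transform comparison of Nagel-Stein-Wainger type: by the finite generation hypothesis (Definition \ref{DefnGeneratesAFiniteList}) there is an integer $m$, uniform in $\delta\in \sA_{\mu_0}$, such that the $m$-fold composition $(t^{(1)},\ldots,t^{(m)})\mapsto \gamma_{\delta t^{(m)}}\circ\cdots\circ\gamma_{\delta t^{(1)}}(x)$ covers a definite fraction of $\B{X}{d}{x}{c\delta}$ with Jacobian bounded below on a set of comparable measure. Combined with H\"older's inequality and the volume estimates for CC balls, this yields a pointwise domination of the form $\sM f(x)\lesssim (M_{CC}(|f|^m)(x))^{1/m}$. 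The operator $M_{CC}$ is bounded on $L^q$ for $1<q\leq\infty$ by iterating a single-parameter Vitali argument — which follows from doubling and engulfing of the CC balls under $\sD(\K,\sA_{\mu_0})$ — over the $\nu$ parameters in the Jessen-Marcinkiewicz-Zygmund fashion. Applying this at exponent $p/m>1$ gives $\sM$ bounded on $L^p$ for $p>m$; the remaining range $1<p\leq m$ is obtained by combining the iterated bound with weak-type endpoint estimates for $M_{CC}$ and Marcinkiewicz interpolation.

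The main obstacle is the uniform iterated ball-covering lemma of the second step: one must produce, uniformly in $\delta\in\sA_{\mu_0}$, a quantitative implicit-function-theorem statement showing that a fixed number of iterations of $\gamma_{\delta\cdot}$ fill a definite portion of $\B{X}{d}{x}{c\delta}$ with a Jacobian bounded below. This is where the scaled commutator relation $[\delta^{d_i}X_i,\delta^{d_j}X_j]=\sum_k c_{i,j}^{k,\delta,x}\delta^{d_k}X_k$ of $\sD(\K,\sA_{\mu_0})$ and the generating condition on $\sF$ enter decisively, since both the integer $m$ and the Jacobian lower bound must be independent of $\delta$. The compatibility of this iteration with the $\sA_{\mu_0}$ structure — in particular for parameters $\mu>\mu_0$, where only the one-sided inequalities $\delta_{\mu_0}\leq\cdots\leq\delta_\nu$ are available — is a further technical subtlety that the nonisotropic rescaling must respect.
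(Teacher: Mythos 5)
Your proposal takes a geometric covering route — dominate $\sM$ by a multi-parameter Carnot--Carath\'eodory maximal function $M_{CC}$, prove $M_{CC}$ bounded by Jessen--Marcinkiewicz--Zygmund iteration of one-parameter Vitali arguments, then transfer back via an iterated Nagel--Stein--Wainger covering lemma. The paper does something entirely different: it discretizes to $\sup_j M_j$, introduces the alternating-sum operators $B_j = \sum_{E\subseteq\nuset}(-1)^{|E|}A_{j_{E^c}}M_{j_E}$, proves an $L^2$ almost-orthogonality estimate (Theorem \ref{ThmL2Thm}) via the general $L^2$ result of \cite{StreetMultiParameterSingRadonLt}, builds a Littlewood--Paley square function out of the operators $D_j$ (Theorem \ref{ThmSquare}), and then runs a bootstrap interpolation argument of the Nagel--Stein--Wainger ``lacunary directions'' type to propagate the $L^2$ gain down to $L^p$, $1<p\leq 2$, by showing $\sP=(1,2]$ in Lemma \ref{LemmasPProp}.

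The two central steps of your proposal are both genuine gaps. First, the domination $\sM f \lesssim (M_{CC}(|f|^m))^{1/m}$ requires a uniform multi-parameter covering lemma: that $m$ iterated compositions of $\gamma_{\delta\cdot}$ fill a definite fraction of $\B{X}{d}{x}{c\delta}$ with Jacobian bounded below, uniformly in $\delta\in\sA_{\mu_0}$. You flag this as ``the main obstacle,'' but it is more than an obstacle — nothing like it is proved in this paper or in \cite{StreetMultiParameterCCBalls}, and the paper never reduces the maximal theorem to a CC-ball maximal function at all. The averaging in $\sM$ is along a low-dimensional surface, while $M_{CC}$ averages over the full leaf, and making that jump is exactly the hard single-parameter content of \cite{ChristNagelSteinWaingerSingularAndMaximalRadonTransforms}; in the multi-parameter setting this issue is compounded by the flag restriction $\sA_{\mu_0}$, and it is not addressed in your sketch.

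Second, even granting the domination, the $L^q$ boundedness of $M_{CC}$ ``by iterating a single-parameter Vitali argument over the $\nu$ parameters in the Jessen--Marcinkiewicz--Zygmund fashion'' is not available. Multi-parameter CC balls are not products of one-parameter CC balls (the $X^\mu$ do not commute), so the iterated one-parameter maximal function does not pointwise dominate $M_{CC}$ in any direct way, and the multi-parameter balls do not form a space of homogeneous type, so there is no Vitali/weak-$(1,1)$ endpoint to interpolate from. The paper itself confronts exactly this question in Section \ref{SectionSingularIntegrals} in the best-case scenario ($\gamma_t(x) = e^{t\cdot X}x$, where the CC geometry is explicit): even there, showing $\sMt f \leq (\sMt_1\cdots\sMt_\nu)^M f$ for some large $M$ is described as ``lengthy and technical,'' and the authors explicitly state that this approach ``does not seem to yield Theorem \ref{ThmMainMaxThm} in the general case.'' This is also the content of Remark \ref{RmkNoCZTheory}: there is no a priori multi-parameter Calder\'on--Zygmund theory to fall back on, which is precisely what your plan presupposes. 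The square-function / bootstrap machinery is what the paper builds to route around this obstruction, and your proposal does not reproduce it.
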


Most of the paper will be devoted to exhibiting the proofs of Theorems
\ref{ThmMainThmSecondPass} and \ref{ThmMainMaxThm} in the case when
$\mu_0=\nu$.  That is, when $\sA_{\mu_0}=\q[0,1\w]^\nu$.  This case
contains all the main ideas, but allows for simpler notation.
We then describe the modifications needed to attack the
more general situation in Section \ref{SectionMoreKernels}.

\begin{rmk}\label{RmkNoCZTheory}
A major difficulty in the proofs of Theorems \ref{ThmMainThmSecondPass}
and \ref{ThmMainMaxThm} is, there is no appropriate {\it a priori} multi-parameter
theory analogous to the standard theory of Calder\'on-Zygmund singular
integrals to fall back on.
Indeed, in the single parameter ($\nu=1$) case, one can
``smooth out'' the operators in question enough to apply
the standard Calder\'on-Zygmund theory and obtain
some $L^p$ estimates (see Section 18 of 
\cite{ChristNagelSteinWaingerSingularAndMaximalRadonTransforms}).
To get around this issue, we will use square function techniques, that
will allow us to introduce the Calder\'on-Zygmund theory in a
more round about manner.
Once Theorem \ref{ThmMainThmSecondPass} is proved, we will
actually obtain, {\it a posteriori,} a prototype for
some aspects of 
a multi-parameter Calder\'on-Zygmund type theory.
See Section \ref{SectionSingularIntegrals}.
\end{rmk}

\begin{rmk}\label{RmkABetterMaxForRealAnal}
While the maximal results in this paper are new, the class of $\gamma$
we consider was more motivated by Theorem \ref{ThmMainThmSecondPass}
than by Theorem \ref{ThmMainMaxThm}.
Indeed, we will see in Section \ref{SectionMaximalComment}
that there are choices of $\gamma$ where the $L^p$ boundedness of the singular
integral fails, but the $L^p$ boundedness of the maximal function holds.
We have not attempted to state Theorem \ref{ThmMainMaxThm}
in such a way to include these choices.  This deficiency will
be partially rectified in
\cite{StreetMultiParameterSingRadonAnal}.
Indeed, in \cite{StreetMultiParameterSingRadonAnal}, we will obtain
the following result.
Let $\gamma_t\q(x\w):\R^N_0\times \R^n_0\rightarrow \R^n$ be
a {\bf real analytic function} defined on a neighborhood
of $\q(0,0\w)\in \R^N\times \R^n$, and satisfying $\gamma_0\q(x\w)\equiv x$;
assuming {\it no additional hypotheses}.
Define a maximal operator by,
\begin{equation*}
\sMt f\q(x\w) = \sup_{\delta=\q(\delta_1,\ldots, \delta_N\w)\in \q[0,1\w]^N}\psi\q(x\w) \int_{\q|t\w|\leq a} \q|f\q(\gamma_{\delta_1t_1,\ldots, \delta_N t_N}\q(x\w)\w)\w|\: dt.
\end{equation*}
Then, $\sMt$ is bounded on $L^p$ ($1<p\leq \infty$), provided $\psi$ is
supported on a sufficiently small neighborhood of $0$, and $a$
is sufficiently small.
Note that this result is clearly not a special case of Theorem \ref{ThmMainMaxThm}.
See, also, \cite{SteinStreetMultiparameterSingularRadonTransformsAnnounce}
for a further discussion of this point.
\end{rmk}

\begin{rmk}
The hypotheses we put on $\gamma$ depend on the choice of the dilators $e$ and on $\mu_0$.  In particular, if all the $e_j$
are nonzero in only one component, the hypotheses still depend on $\mu_0$.  In fact, our hypotheses are weaker when $K$
is a ``flag kernel'' ($\mu_0=1$ and all $e_j$ are nonzero in only one component) than when $K$ is a ``product kernel'' ($\mu_0=\nu$ and all
$e_j$ are nonzero in only one component).  This is discussed in detail, with examples, in Sections 17.5 and 17.6 of
\cite{StreetMultiParameterSingRadonLt}.
\end{rmk}

\section{Basic Notation}
Throughout the paper, for $v=\q(v_1,\ldots, v_n\w)\in \R^n$, we write 
$\q|v\w|$ for $\q( \sum_j \q|v_j\w|^2 \w)^{\frac{1}{2}}$, and we write
$\q|v\w|_\infty$ for $\sup_j \q|v_j\w|$.
$B^{n}\q( \eta\w)$ will denote
the ball of radius $\eta>0$ in the $\q|\cdot\w|$ norm.
For two numbers $a,b\in \R$ we write
$a\vee b$ for the maximum of $a$ and $b$ and $a\wedge b$ for the minimum.
If instead, $a=\q( a_1,\ldots, a_n\w), b=\q( b_1,\ldots, b_n\w)\in \R^n$, we write $a\vee b$ (respectively, $a\wedge b$)
for $\q(a_1\vee b_1,\ldots, a_n \vee b_n \w)$ (respectively, $\q(a_1\wedge b_1,\ldots, a_n \wedge b_n\w)$).

For a vectors $\delta=\q(  \delta_1,\ldots, \delta_\nu\w), d=\q( d_1,\ldots, d_\nu\w)\in \R^\nu$, we define $\delta^d$ by the standard multi-index notation.
I.e., $\delta^d=\prod_{\mu=1}^{\nu} \delta_\mu^{d_\mu}$.  Also we will
write $2^d = \q(2^{d_1},\ldots,2^{d_\nu}\w)$.  

Given a, possibly arbitrary, set $U\subseteq \R^n$ and a continuous
function $f$ defined on a neighborhood of $U$, we write
$$\CjN{f}{j}{U} = \sum_{\q|\alpha\w|\leq j}\sup_{x\in U} \q|\partial_x^\alpha f\q( x\w)\w|,$$
and if we state that $\CjN{f}{j}{U}$ is finite, we mean that the partial
derivatives up to order $j$ of $f$ exist on $U$, are continuous, and the above norm
is finite.
If $f$ is replaced by a vector field $Y=\sum_k a_k\q( x\w)\partial_{x_k}$, then we write,
$$\CjN{Y}{j}{U} = \sum_{k}\CjN{a_k}{j}{U}.$$

Given a matrix $A$, we write $\q\| A\w\|$ for the usual operator norm.
Given two integers $1\leq m\leq n$, we let $\sI{m}{n}$ denote the set
of all lists of integers $\q( i_1,\ldots, i_m\w)$ such that
$$1\leq i_1<i_2<\cdots<i_m\leq n.$$
Furthermore, suppose $A$ is an $n\times q$ matrix, and suppose
$1\leq n_0\leq n\wedge q$.
For $I\in \sI{n_0}{n}$, $J\in \sI{n_0}{q}$ we let $A_{I,J}$ denote
the $n_0\times n_0$ matrix given by taking the rows from $A$ which
are listed in $I$ and the columns from $A$ which are listed in $J$.
We define
$$\Det{n_0} A = \q( \det A_{I,J} \w)_{\substack{I\in \sI{n_0}{n}\\J\in \sI{n_0}{q}}},$$
so that, in particular, $\Det{n_0} A$ is a {\it vector} (it will not
be important to us in which order the coordinates are arranged).
$\Det{n_0} A$ comes up when one changes variables.  Indeed, suppose
$\Phi$ is a $C^1$ diffeomorphism from an open subset $U\subset \R^{n_0}$
mapping to an $n_0$ dimensional submanifold of $\R^n$, where
this submanifold is given the induced Lebesgue measure $dx$.  Then, we have
$$\int_{\Phi\q( U\w)} f\q( x\w) \: dx = \int_U f\q( \Phi\q( t\w)\w) \q|\Det{n_0} d\Phi\q( t\w)\w|\: dt.$$

If $A=\q( A_1,\ldots, A_q\w)$ is a list of, possibly non-commuting, operators,
we will use ordered multi-index notation to define $A^\alpha$, where
$\alpha$ is a list of numbers $1,\ldots, q$.  $\q|\alpha\w|$ will
denote the length of the list.  For instance, if
$\alpha=\q( 1,4,4,2,1\w)$, then $\q| \alpha\w|=5$ and $A^\alpha = A_1A_4A_4A_2A_1$.  Thus, if $A_1,\ldots A_q$ are vector fields, then $A^\alpha$ is an
$\q|\alpha\w|$ order partial differential operator.

If $f:\R^{n}\rightarrow \R^m$ is a map, then we write,
\begin{equation*}
d f\q( x\w) \q( \frac{\partial}{\partial x_j}\w),
\end{equation*}
for the differential of $f$ at the point $x$ applied to the vector field
$\frac{\partial}{\partial x_j}$.  If $f$ is a function of two variables,
$f\q( t,x\w):\R^N\times \R^n\rightarrow \R^m$, and we wish to view $d f$ as a linear transformation
acting on the vector space spanned by $\frac{\partial}{\partial_{t_j}}$ ($1\leq j\leq N$),
then we instead write,
\begin{equation*}
\frac{\partial f}{\partial t} \q( t,x\w)
\end{equation*}
to denote this linear transformation.  Hence, it makes sense to write,
\begin{equation*}
\det_{n_0\times n_0} 
\frac{\partial f}{\partial t} \q( t,x\w),
\end{equation*}
where $n_0\leq m\wedge N$.

If $\psi_1,\psi_2\in C_0^\infty\q( \R^n\w)$, we write $\psi_1\prec \psi_2$
to denote that $\psi_2\equiv 1$ on a neighborhood of the support
of $\psi_1$.

We will have occasion to use vector valued functions.
We denote by $L^p\q( \ell^q\q( \N^{\nu} \w)\w)$ the set of sequences
of measurable functions $\q\{f_j\w\}_{j\in\N^\nu}$ such that,
$$\LpN{p}{\q(\sum_{j\in \N^\nu} \q|f_j \w|^q \w)^{1/q}}<\infty.$$

Finally, we will devote a good deal of notation to multi-parameter
Carnot-Carath\'eodory geometry.  
See Sections \ref{SectionCCGeom} and \ref{SectionCCGeomII}.

\section{Multi-parameter Carnot-Carath\'eodory geometry Revisited}\label{SectionCCGeomII}
In this section, we present the results that allow us to deal
with Carnot-Carath\'eodory geometry.  The results we
outline here are contained in Section 4 of
\cite{StreetMultiParameterCCBalls}.
The heart of this theory is the ability to ``rescale'' vector fields.
This rescaling is obtained by conjugating by a particular diffeomorphism,
which will be denoted by $\Phi$ in what follows.

Before we can enter into details, we must explain the connection
between multi-parameter balls and single-parameter balls.
We assume we are given $C^\infty$ vector fields
$X_1,\ldots, X_q$ with associated formal degrees $0\ne d_1,\ldots,d_q\in \q[0,\infty\w)^\nu$.  Here, $\nu\in \N$ is the number of parameters.
Given the multi-parameter degrees, we obtain corresponding
single parameter degrees, which we denote
by $\sd$ and are defined by $\q( \sd\w)_j:=\sum_{\mu=1}^\nu d_j^\mu=\q|d_j\w|_1$.
Let $\delta\in \q[0,\infty\w)^\nu$, and suppose we wish to study
the ball
\begin{equation*}
\B{X}{d}{x_0}{\delta}.
\end{equation*}
Decompose $\delta=\delta_0\delta_1$ where $\delta_0\in \q[0,\infty\w)$
and $\delta_1\in\q[0,\infty\w)^\nu$ (of course this decomposition
is not unique).
Then, directly from the definition, we obtain:
\begin{equation*}
\B{X}{d}{x_0}{\delta}=\B{\delta_1 X}{\sd}{x_0}{\delta_0}=\B{\delta X}{\sd}{x_0}{1}.
\end{equation*}
Thus, studying a ball of radius $\delta$ corresponding to $\q( X,d\w)$
is the same as studying a ball of radius $1$ corresponding
to $\q( \delta X, \sd\w)$.
For this reason, taking $\K$ as in Section \ref{SectionCCGeom}
and assuming $\q( X,d\w)$ satisfies $\sD\q( \K,\q[0,1\w]^\nu\w)$,
we will fix $x_0\in \K$ and $\delta\in \q[0,1\w]^\nu$ and study balls
of radius $\approx 1$ centered at $x_0$ corresponding to the
vector fields with single-parameter formal degrees $\q( \delta X, \sd\w)$.
In what follows, it will be important that all of the implicit constants
are independent of $x_0\in \K$ and $\delta\in \q[0,1\w]^\nu$.

We now turn to stating a theorem about a list of $C^\infty$ vector fields
$Z_1,\ldots, Z_q$ defined on an open set $\Omega\subseteq \R^n$, with associated single parameter formal degrees
$\dt_1,\ldots, \dt_q\in \q(0,\infty\w)$.  The special case
we are interested in is the case when
$\q( Z,\dt\w) = \q( \delta X, \sd\w)$; i.e., when
$Z_j=\delta^{d_j} X_j$ and $\dt_j=\q|d_j\w|_1$.

Fix $x_0\in \Omega$ and $1\geq \xi>0$.\footnote{In our primary example,
one takes $x_0\in \K$ and $\xi$ as in $\sD\q( \K,\q[0,1\w]^\nu\w)$.}
Let $n_0=\dspan{Z_1\q( x_0\w), \ldots, Z_q\q( x_0\w)}$.
For $\q( j_1,\ldots, j_{n_0}\w)\in \sI{n_0}{q}$, let $Z_J$ denote the list of vector fields
$Z_{j_1},\ldots, Z_{j_{n_0}}$.
Fix $J_0\in \sI{n_0}{q}$ such that
\begin{equation*}
\q| \det_{n_0\times n_0} Z_{J_0}\q( x_0\w)\w|_\infty = \q|\det_{n_0\times n_0} Z\q( x_0\w) \w|_\infty,
\end{equation*}
where we have identified $Z\q( x_0\w)$ with the $n\times q$ matrix
whose columns are given by $Z_1\q( x_0\w),\ldots, Z_q\q( x_0\w)$ and
similarly for $Z_{J_0}\q( x_0\w)$.
We assume $\q(Z,\dt\w)$ satisfies $\sC\q( x_0,\xi,\Omega\w)$.
In addition we suppose that there are functions $c_{i,j}^k$ on
$\B{Z}{\dt}{x_0}{\xi}$ such that
\begin{equation*}
\q[Z_i,Z_j\w]=\sum c_{i,j}^k Z_k, \text{ on }\B{Z}{\dt}{x_0}{\xi}.
\end{equation*}
We assume that:
\begin{itemize}
\item $\CjN{Z_j}{m}{\B{Z}{\dt}{x_0}{\xi}}<\infty$ for every $m$.
\item $\sum_{\q|\alpha\w|\leq m} \CjN{ Z^\alpha c_{i,j}^k}{0}{\B{Z}{\dt}{x_0}{\xi}}<\infty,$
for every $m$ and every $i,j,k$.
\end{itemize}
We say that $C$ is an $m$-admissible constant if $C$ can be chosen to
depend only on upper bounds for the above two quantities (for that particular 
choice of $m$),
$m$,
upper and lower bounds for $\dt_1,\ldots, \dt_{q}$, an
upper bound for $n$ and $q$, and a lower bound for
$\xi$.
Note that, in our primary example $\q( Z,\dt\w)=\q( \delta X, \sd\w)$, 
$m$-admissible constants can be chosen independent of
$x_0\in \K$ and $\delta\in \q[0,1\w]^\nu$.
We write $A\lesssim_m B$ if $A\leq C B$, where $C$ is an $m$-admissible
constant, and we write $A\approx_m B$ if $A\lesssim_m B$ and $B\lesssim_m A$.
Finally, we say $\tau=\tau\q( \kappa\w)$ is an $m$-admissible constant
if $\tau$ can be chosen to depend on all the parameters an $m$
admissible constant may depend on, and $\tau$ may also depend on $\kappa$.

\begin{rmk}
Under the above assumptions, the classical Frobenius theorem applies to show that (near $x_0$) there is a submanifold passing through $x_0$,
of dimension $n_0$, whose tangent space is spanned by $Z_1,\ldots, Z_q$.  The balls $\B{Z}{\dt}{x_0}{\delta}$ are open subsets of
this submanifold, and we use the notation $\Vol{\B{Z}{d}{x_0}{\delta}}$ to denote the volume of this ball in the sense of the induced Lebesgue measure
on this submanifold.  See \cite{StreetMultiParameterCCBalls} for more details.
\end{rmk}

\begin{thm}\label{ThmMainCCThm}
There exist $2$-admissible constants $\eta_1,\xi_1>0$ such that
if the map $\Phi:B^{n_0}\q( \eta_1\w)\rightarrow \B{Z}{\dt}{x_0}{\xi}$
is defined by
\begin{equation*}
\Phi\q( u\w) = e^{u\cdot Z_{J_0}} x_0,
\end{equation*}
we have
\begin{itemize}
\item $\Phi:B^{n_0}\q( \eta_1\w) \rightarrow  \B{Z}{\dt}{x_0}{\xi}$ is injective.
\item $\B{Z}{\dt}{x_0}{\xi_1}\subseteq \Phi\q( B^{n_0}\q( \eta_1\w)\w)$.
\item For all $u\in B^{n_0}\q( \eta_1\w)$, $\q|\det_{n_0\times n_0} d\Phi\q( u\w)\w|\approx_2 \q|\det_{n_0\times n_0} Z\q( x_0\w)\w|$.
\item $\Vol{\B{Z}{\dt}{x_0}{\xi_1}}\approx_2 \q|\det_{n_0\times n_0} Z\q( x_0\w)\w|$.
\end{itemize}
Furthermore, if we let $Y_j$ be the pullback of $Z_j$ under the map
$\Phi$, then we have, for $m\geq 0$,
\begin{equation}\label{EqnRescaledSmooth}
\CjN{Y_j}{m}{B^{n_0}\q( \eta_1\w)}  \lesssim_{m\vee 2} 1,
\end{equation}
\begin{equation}\label{EqnRescaledCjN}
\CjN{f}{m}{B^{n_0}\q(\eta_1\w)} \approx_{\q(m-1\w)\vee 2} \sum_{\q|\alpha\w|\leq m} \CjN{Y^\alpha f}{0}{B^{n_0}\q( \eta_1\w)}.
\end{equation}
Finally,
\begin{equation}\label{EqnRescaledSpan}
\q|\det_{n_0\times n_0} Y_{J_0}\q( u\w)\w|\approx_{2} 1,\quad \forall u\in B^{n_0}\q( \eta_1\w).
\end{equation}
\end{thm}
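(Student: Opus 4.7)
The plan is to exploit the maximality of the $J_0$-minor together with the commutator hypothesis $\q[Z_i,Z_j\w]=\sum_k c_{i,j}^k Z_k$ to prove that $\Phi$ is a quantitatively well-controlled diffeomorphism. First I would differentiate $\Phi$ at the origin: since $\Phi\q(u\w)$ is the time-one flow of the vector field $u\cdot Z_{J_0}$, one has $d\Phi\q(0\w)\q(\partial/\partial u_i\w)=Z_{j_i}\q(x_0\w)$ for $J_0=\q(j_1,\ldots,j_{n_0}\w)$, so the $n_0\times n_0$ Jacobian of $\Phi$ at $0$ equals $\Det{n_0} Z_{J_0}\q(x_0\w)$, which by construction is the largest minor of the matrix $Z\q(x_0\w)$. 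This already gives the third and fourth bullets at the single point $u=0$; the rest of the proof is about spreading this estimate over $B^{n_0}\q(\eta_1\w)$.

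To extend the Jacobian estimate, I would write the columns of $d\Phi\q(u\w)$ as solutions of the variational ODE along the flow of $u\cdot Z_{J_0}$. The commutator hypothesis says that $\Span{Z_1,\ldots,Z_q}$ is an involutive distribution, with bounded structure constants and bounded $Z^\alpha$-derivatives thereof. Expanding each column of $d\Phi\q(u\w)$ in the frame $Z_{J_0}\q(\Phi\q(\cdot\w)\w)$ therefore yields a linear ODE $A'\q(s\w)=M\q(s;u\w)A\q(s\w)$ on $\q[0,1\w]$ whose coefficient matrix $M$ is bounded pointwise by a $2$-admissible constant times $\q|u\w|$. Gronwall then gives $A\q(1\w)=\q(I+O\q(\q|u\w|\w)\w)A\q(0\w)$, so for $\q|u\w|\leq \eta_1$ with $\eta_1$ a sufficiently small $2$-admissible constant, $d\Phi\q(u\w)$ remains close to $d\Phi\q(0\w)$ in the basis $Z_{J_0}\q(\Phi\q(u\w)\w)$. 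Combined with the maximality of $J_0$, this yields $\q|\Det{n_0} d\Phi\q(u\w)\w|\approx_2 \q|\Det{n_0} Z\q(x_0\w)\w|$ and $\q|\Det{n_0} Y_{J_0}\q(u\w)\w|\approx_2 1$, and a standard inverse function argument (possibly shrinking $\eta_1$ further) produces the injectivity claim.

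For the containment $\B{Z}{\dt}{x_0}{\xi_1}\subseteq \Phi\q(B^{n_0}\q(\eta_1\w)\w)$, given $y$ on the left side I would pick a sub-unit curve $\gamma\q(s\w)$ joining $x_0$ to $y$ with $\gamma'=\sum a_j Z_j\q(\gamma\w)$ and $\sum_j \q|a_j\w|^2<\xi_1^2$, rewrite this ODE in the frame $Z_{J_0}\q(\gamma\w)$ using the commutator hypothesis, and transfer it to $u$-coordinates via $\Phi^{-1}$. Picard iteration produces a solution $u\q(s\w)$ whose supremum norm is comparable to $\xi_1$; a sufficiently small $2$-admissible choice of $\xi_1$ keeps this solution inside $B^{n_0}\q(\eta_1\w)$, and its endpoint $u\q(1\w)$ satisfies $\Phi\q(u\q(1\w)\w)=y$. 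The volume estimate $\Vol{\B{Z}{\dt}{x_0}{\xi_1}}\approx_2 \q|\Det{n_0} Z\q(x_0\w)\w|$ is then a direct change-of-variables consequence of the Jacobian bound.

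The pullback estimate \eqref{EqnRescaledSmooth} follows from the identity $Y_j=\q(d\Phi\w)^{-1}Z_j\circ \Phi$ combined with the uniform lower bound on $\q|\Det{n_0} d\Phi\w|$ and induction on $m$: once $d\Phi$ and its inverse are $C^0$-controlled, the commutator hypothesis provides the higher derivatives iteratively. The norm equivalence \eqref{EqnRescaledCjN} then reduces to \eqref{EqnRescaledSpan}, which lets us express each $\partial/\partial u_i$ as a bounded combination of the $Y_{j_\ell}$ and conversely. The main obstacle is the spreading step in the second paragraph: one must prevent the Jacobian of $\Phi$ from collapsing anywhere on $B^{n_0}\q(\eta_1\w)$, and it is precisely the hypothesis that $\q[Z_i,Z_j\w]\in \Span{Z_1,\ldots,Z_q}$ with coefficients whose $Z^\alpha$-derivatives are controlled that forces the variational ODE to close up inside the span of $Z_{J_0}$.
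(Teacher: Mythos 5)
This paper does not prove Theorem~\ref{ThmMainCCThm}: the result is imported verbatim from Section~4 of \cite{StreetMultiParameterCCBalls} and used as a black box, so there is no internal proof to compare against. Your sketch sits in the right family of arguments (exponential coordinates adapted to a maximal minor, a transport/Gronwall estimate, then a change of variables for the volume), but the ``spreading'' step you flag as the main obstacle has a genuine gap, and it is the crux of the theorem, not a technicality.

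When you differentiate the flow and write the columns of $d\Phi\q(u\w)$ in the frame $Z_{J_0}\q(\Phi\q(\cdot\w)\w)$, the variational equation produces commutator terms $\q[u\cdot Z_{J_0}, Z_{j_k}\w]$, and the hypothesis only places these in $\Span{Z_1,\ldots,Z_q}$, not in $\Span{Z_{j_1},\ldots,Z_{j_{n_0}}}$. To close the linear system $A'=MA$ with $M$ bounded, you must re-expand each $Z_m\q(\Phi_s\q(u\w)\w)$ in the $J_0$-frame with controlled coefficients; but the maximality of the $J_0$-minor is hypothesized only at $x_0$, and whether it persists (up to constants) along $\Phi_s\q(u\w)$ is exactly \eqref{EqnRescaledSpan}, i.e.\ one of the conclusions. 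So as written the argument is circular. A plain Euclidean Gronwall bound $d\Phi\q(u\w)=Z_{J_0}\q(x_0\w)+O\q(\q|u\w|\w)$ does not rescue this either, since $\q|\det_{n_0\times n_0} Z_{J_0}\q(x_0\w)\w|$ may be far smaller than the additive $O\q(\q|u\w|\w)$ error. The proof in \cite{StreetMultiParameterCCBalls} gets around this by working directly with the pullbacks $Y_j$, exploiting the Euler-type identity $\sum_i u_i Y_{j_i}=\sum_i u_i\,\partial/\partial u_i$ that is special to the exponential map $\Phi$, and running a bootstrap: the transport ODEs along radial lines are solved for \emph{all} the $Y_j$ simultaneously, and a continuity/smallness argument in $\eta_1$ keeps the $J_0$-block uniformly non-degenerate before the Gronwall estimate you have in mind is applied. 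Your remaining steps (containment, volume comparison, \eqref{EqnRescaledSmooth}, \eqref{EqnRescaledCjN}) are fine once \eqref{EqnRescaledSpan} is available, but the bootstrap cannot be elided.
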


Note that, in light of \eqref{EqnRescaledSmooth} and \eqref{EqnRescaledSpan},
pulling back by the map $\Phi$ allows us to rescale the vector fields
$Z$ in such a way that the rescaled vector fields, $Y$, are smooth
and span the tangent space (uniformly in any relevant parameters).
We will also need the following 
technical result.

\begin{prop}\label{PropExtraCCStuff}
Suppose $\xi_2,\eta_2>0$ are given.  Then there exist $2$-admissible constants
$\eta'=\eta'\q( \xi_2\w)>0$, $\xi'=\xi'\q( \eta_2\w)>0$ such that,
\begin{equation*}
\Phi\q( B^{n_0}\q( \eta'\w)\w) \subseteq \B{Z}{\dt}{x_0}{\xi_2},
\end{equation*}
\begin{equation*}
\B{Z}{\dt}{x_0}{\xi'}\subseteq \Phi\q( B^{n_0}\q( \eta_2\w)\w).
\end{equation*}
\end{prop}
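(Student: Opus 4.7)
The plan is to establish the two inclusions by combining the flow-box definition $\Phi(u)=e^{u\cdot Z_{J_0}}x_0$ with the fundamental containment $\B{Z}{\dt}{x_0}{\xi_1}\subseteq \Phi(B^{n_0}(\eta_1))$ from Theorem \ref{ThmMainCCThm}.

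For the first inclusion, $\Phi(B^{n_0}(\eta'))\subseteq \B{Z}{\dt}{x_0}{\xi_2}$, the approach is direct. Given $u\in B^{n_0}(\eta')$, consider the single trajectory $\gamma(t)=e^{tu\cdot Z_{J_0}}x_0$, which satisfies $\gamma(0)=x_0$, $\gamma(1)=\Phi(u)$, and
\[
\gamma'(t)=\sum_{j\in J_0}u_j\, Z_j(\gamma(t)).
\]
Since the coefficients are the constants $u_j$, we have $\Lppn{\infty}{[0,1]}{|a|}=|u|<\eta'$. Choosing $\eta'\leq \xi_2\wedge \eta_1$ (so that $\Phi(u)$ is defined and the trajectory certifies membership in the target $\sC$-ball) yields the desired inclusion.

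For the second inclusion, $\B{Z}{\dt}{x_0}{\xi'}\subseteq \Phi(B^{n_0}(\eta_2))$, the idea is to lift $\sC$-curves from $\Omega$ up to $B^{n_0}(\eta_1)$ via $\Phi^{-1}$ and control them with the rescaled fields $Y_j$. First choose $\xi'\leq\xi_1$. Given $y\in\B{Z}{\dt}{x_0}{\xi'}$, let $\gamma:[0,1]\to\Omega$ realize $y$ with coefficients $a_j$ satisfying $\Lppn{\infty}{[0,1]}{|a|}<\xi'$. A standard reparametrization argument shows that every intermediate point $\gamma(s)$ lies in $\B{Z}{\dt}{x_0}{\xi'}\subseteq\B{Z}{\dt}{x_0}{\xi_1}\subseteq \Phi(B^{n_0}(\eta_1))$. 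Since $\Phi$ is injective on $B^{n_0}(\eta_1)$, we can set $\tilde\gamma(t)=\Phi^{-1}(\gamma(t))$; because $Y_j$ is the pullback of $Z_j$ under $\Phi$, the lift satisfies $\tilde\gamma'(t)=\sum a_j(t)\,Y_j(\tilde\gamma(t))$ with $\tilde\gamma(0)=0$. Applying the uniform bound \eqref{EqnRescaledSmooth} gives
\[
|\tilde\gamma(1)|\leq \int_0^1 |\tilde\gamma'(s)|\,ds \leq \xi'\cdot q^{1/2}\cdot \sup_{1\leq j\leq q}\CjN{Y_j}{0}{B^{n_0}(\eta_1)}\leq C\xi',
\]
where $C$ is a $2$-admissible constant. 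Taking $\xi'\leq (\eta_2/C)\wedge \xi_1$, we conclude $\tilde\gamma(1)\in B^{n_0}(\eta_2)$ and $y=\Phi(\tilde\gamma(1))\in\Phi(B^{n_0}(\eta_2))$.

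The main technical point is justifying the existence of the lift $\tilde\gamma$ on all of $[0,1]$ — for this one needs that the curve $\gamma$ stays inside $\Phi(B^{n_0}(\eta_1))$, which is precisely why we impose $\xi'\leq \xi_1$ and invoke Theorem \ref{ThmMainCCThm} for each initial segment. Once the lift is available, the remainder is a direct Gronwall-type estimate using the admissible $C^0$-bound on the $Y_j$ furnished by the rescaling theorem.
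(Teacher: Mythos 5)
Your argument is structurally sound and gives a self-contained proof, whereas the paper's proof is a pure citation: it disposes of the first inclusion by invoking Theorem~\ref{ThmMainCCThm} with $\xi$ replaced by $\xi\wedge\xi_2$ (so that the {\it same} formula $\Phi(u)=e^{u\cdot Z_{J_0}}x_0$, restricted to a smaller $2$-admissible domain, maps into $\B{Z}{\dt}{x_0}{\xi\wedge\xi_2}\subseteq\B{Z}{\dt}{x_0}{\xi_2}$ automatically), and disposes of the second by combining the proofs of Propositions~3.21 and~4.16 of \cite{StreetMultiParameterCCBalls}. Your direct verification of the first inclusion via the explicit flow $\gamma(t)=\Phi(tu)$, and your lift-and-integrate argument for the second inclusion, appear to be what those cited proofs amount to, and the two technical points you flag (reparametrizing so intermediate points remain in the ball, hence in $\Phi(B^{n_0}(\eta_1))$ so the lift is globally defined; uniform $C^0$-bounds on $Y_j$ from \eqref{EqnRescaledSmooth}) are exactly the right things to invoke.

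The one place where the write-up is imprecise is the role of the single-parameter formal degrees $\dt_j$ in the normalization of coefficients. The ball $\B{Z}{\dt}{x_0}{\xi_2}=B_{\xi_2 Z}(x_0)$ is defined via the {\it rescaled} fields $\xi_2^{\dt_j}Z_j$ with coefficient constraint $\|a\|_\infty<1$. Writing $\gamma'(t)=\sum_{j\in J_0}u_j Z_j$, the relevant coefficients are $u_j/\xi_2^{\dt_j}$, so the condition is $|u|<\xi_2^{\max_j\dt_j}$ (when $\xi_2\le 1$), not $|u|<\xi_2$; similarly, in the second inclusion the $Y_j$-coefficients of the lift are $a_j(\xi')^{\dt_j}$, giving $|\tilde\gamma(1)|\lesssim(\xi')^{\min_j\dt_j}$ rather than $\lesssim\xi'$. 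Your explicit choices $\eta'\le\xi_2\wedge\eta_1$ and $\xi'\le(\eta_2/C)\wedge\xi_1$ therefore tacitly assume $\dt_j\equiv 1$. Since the paper's Remark following Proposition~\ref{PropExtraCCStuff} explicitly states the $\dt_j$ are inessential and one may as well take $\dt_j=1$, and since the $\dt_j$ are bounded above and below by admissible constants (so the corrected exponents still yield $2$-admissible choices), this is a cosmetic slip rather than a real gap; but it is worth fixing, and also worth dropping the word ``Gronwall'' — the estimate you actually use is just the fundamental theorem of calculus, with no feedback of the solution into the bound.
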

\begin{proof}
The existence of $\eta'$ can be seen by 
applying Theorem \ref{ThmMainCCThm} with $\xi$ replaced by $\xi\wedge \xi_2$.
The existence of $\xi'$ can be shown by
combining the proof
of Proposition 3.21 of \cite{StreetMultiParameterCCBalls} with
the proof of Proposition 4.16 of \cite{StreetMultiParameterCCBalls}.
\end{proof}

\begin{rmk}
With a slight abuse of notation, when we say $m$-admissible constant,
where $m<2$, we will take that to mean a $2$-admissible constant.
Using this new notation, the $\vee$ in \eqref{EqnRescaledSmooth}
and \eqref{EqnRescaledCjN} may be removed.
\end{rmk}

\begin{rmk}
It is not hard to see that the single-parameter formal degrees
$\dt$ do not play an essential role in the above
(see Remark 3.3 of \cite{StreetMultiParameterCCBalls}).
In fact, one could state Theorem \ref{ThmMainCCThm},
taking all the formals degrees $\dt_j=1$ and that would
be sufficient for our purposes.  Moreover, in every place
we use the single-parameter formal degrees $\dt$,
they are inessential.
We have stated the result as above, though, to allow
us to transfer seamlessly between the vector fields
$\q( X,d\w)$ and $\q( Z,\dt\w)$, without any hand-waving
about the formal degrees.
\end{rmk}

\section{Some special single-parameter operators}\label{SectionCZOps}
In this section, we describe a certain single-parameter (i.e., $\nu=1$)
special case of our main theorem.  This special
case will be easy to obtain using the theory described in
Section \ref{SectionCCGeomII}, along with the classical
Calder\'on-Zygmund theory of singular integrals.
We will then, in Section \ref{SectionSquareFunc}, use the operators developed in this section
to create an appropriate Littlewood-Paley theory adapted to the more
general operators of this paper.

We suppose we are given $\K\Subset \Omega'\Subset \Omega''\Subset \Omega$ as in Section \ref{SectionCCGeom}
and a list of $C^\infty$ vector fields $X_1,\ldots X_q$ on $\Omega'$
with {\it single}-parameter formal degrees $d_1,\ldots, d_q\in \q(0,\infty\w)$.
We assume that there exists an $\xi>0$ such that
$\q( X,d\w)$ satisfies 
$\sD\q( \K, \q[0,1\w],\Omega', \xi\w)$.

\begin{rmk}\label{RmkSingleParamHomogType}
In the case above, for $\delta\leq \frac{\xi_1}{2}$,\footnote{Here,
$\xi_1$ is as in Theorem \ref{ThmMainCCThm}.}
 we have, using  the notation and results from Theorem \ref{ThmMainCCThm},
\begin{equation}\label{EqnSingleParamHomogType}
\begin{split}
\Vol{\B{X}{d}{x_0}{2\delta}}&\approx \q|\det_{n_0\times n_0} \q(2\delta\w) X\q( x_0\w)\w| \\
&\approx \q|\det_{n_0\times n_0}\delta X\q( x_0\w)\w|\\
&\approx \Vol{\B{X}{d}{x_0}{\delta}},
\end{split}
\end{equation}
where we have used our usual notation that $\delta X$ denotes the matrix
whose columns are given by $\delta^{d_1} X_1,\ldots, \delta^{d_q}X_q$.
\eqref{EqnSingleParamHomogType} is the fundamental estimate involved in
showing that the balls $\B{X}{d}{\cdot}{\cdot}$ form a space
of homogeneous type.  Thus, if $X_1,\ldots, X_q$ spanned the
tangent space (i.e., if $n_0=n$), then the above Carnot-Carath\'eodory balls
would be open subsets of $\R^n$ and would endow $\K$ with
the structure of a space of homogeneous type.  However, we are interested
in the case when $X_1,\ldots, X_q$ do not, necessarily, span the tangent
space.  In this case, the classical Frobenius theorem applies to show
that $X_1,\ldots, X_q$ foliate the $\K$ into leaves,\footnote{The involutive
distribution generated by $X_1,\ldots, X_q$ is finitely generated as a $C^\infty$
module in light of condition $\sD$.}
and each leaf is a space of homogeneous type.  Using the
coordinate charts ($\Phi$) developed in Section \ref{SectionCCGeomII},
we will be able to exploit this fact in what follows.
This idea was also used in Section 6.2 of \cite{StreetMultiParameterCCBalls}.
\end{rmk}

We consider the function,
\begin{equation*}
\gamma\q( t,x\w):\Q^q\q( \rho\w)\times \Omega''\rightarrow \Omega,
\end{equation*}
given by,
\begin{equation*}
\gamma_{\q( t_1,\ldots, t_q\w)}\q( x\w) = e^{t_1X_1+\cdots+ t_q X_q}x.
\end{equation*}

We define dilations on $\R^q$ by, for $\delta>0$,
\begin{equation*}
\delta \q( t_1,\ldots, t_q\w) = \q( \delta^{d_1} t_1,\ldots, \delta^{d_q} t_q\w),
\end{equation*}
and we define for $\vsig: \R^q\rightarrow \C$, and $j\geq 0$,
\begin{equation*}
\dil{\vsig}{2^j} \q( t\w) = 2^{j\q( d_1+\cdots+ d_q\w)} \vsig\q( 2^j t\w).
\end{equation*}

Fix $a>0$ a small number.  Let $\q\{\vsig_j\w\}_{j\in \N}\subset C_0^\infty \q( \Q^q\q( a\w)\w)$ be a bounded subset satisfying,
\begin{equation*}
\int \vsig_j \q( t\w) \: dt=0, \text{ if } j>0,
\end{equation*}
where we are including $0\in \N$.  Let $K$ be
the distribution defined by,
\begin{equation*}
K\q( t\w) =\sum_{j\in \N} \dil{\vsig_j}{2^j}\q( t\w),
\end{equation*}
where the sum is taken in the sense of distributions.
I.e., $K\in \sK\q( q,e,a,1,1\w)$, where we are taking 
$e=\q( d_1,\ldots, d_q\w)\in \q(0,\infty\w)^q$.

Let $\kappa:\Q^q\q( a\w) \times \Omega''\rightarrow \C$ be a $C^\infty$
function and let $\psi_1,\psi_2\in C_0^\infty\q( \R^n\w)$ be supported
on the interior of $\K$.  Define the operator,
\begin{equation*}
Tf\q( x\w) =\psi_1\q( x\w) \int f\q( \gamma_t\q( x\w) \w) \psi_2\q( \gamma_t\q( x\w)\w) \kappa\q( t,x\w) K\q( t\w) \: dt.
\end{equation*}

\begin{thm}\label{ThmSingleParamSingInt}
There is an $a>0$ such that $T$ (as defined above) is bounded on $L^p$, $1<p<\infty$.
\end{thm}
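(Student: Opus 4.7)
The plan is to realize $T$ as a Calder\'on-Zygmund operator on a space of homogeneous type and then invoke the classical $L^p$ theory. The $L^2$ boundedness is furnished directly by the $L^2$ result of \cite{StreetMultiParameterSingRadonLt} in the single-parameter case, so the substance of the work is to establish appropriate kernel estimates.

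The single-parameter condition $\sD$ makes the involutive distribution generated by $X_1,\ldots,X_q$ finitely generated as a $C^\infty$ module; by Frobenius, a neighborhood of $\K$ is foliated into leaves, and since $\gamma_t(x)=e^{t\cdot X}x$ always lies on the leaf through $x$, $T$ acts leafwise. On a single leaf, the restricted Carnot-Carath\'eodory balls satisfy the doubling estimate \eqref{EqnSingleParamHomogType} (cf.\ Remark \ref{RmkSingleParamHomogType}), so the leaf, equipped with the CC quasi-distance $\rho$ and the induced Lebesgue measure, is a space of homogeneous type. I would represent $T$ on each leaf as an integral operator with kernel $K_T(x,y)$ obtained by pushing the $t$-integral forward through the map $t\mapsto \gamma_t(x)$ (suitably interpreted in the rescaled coordinates below).

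For the kernel estimates, fix $x_0 \in \K$ and use the rescaling map $\Phi(u) = e^{u \cdot X_{J_0}} x_0$ of Theorem \ref{ThmMainCCThm} to parametrize the leaf near $x_0$ at unit scale. Pulling back the dyadic piece
\begin{equation*}
T_j f(x) = \psi_1(x) \int f(\gamma_t(x))\,\psi_2(\gamma_t(x))\,\kappa(t,x)\,\dil{\vsig_j}{2^j}(t)\,dt
\end{equation*}
under $\Phi$ and changing variables $t \mapsto 2^{-j}s$ converts $T_j$ into an integral operator on $B^{n_0}(\eta_1)$ whose kernel is uniformly smooth and compactly supported, with bounds independent of $j$ and of $x_0 \in \K$; this uniformity follows from \eqref{EqnRescaledSmooth} and \eqref{EqnRescaledSpan}. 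The cancellation $\int \vsig_j = 0$ for $j>0$ becomes a standard mean-zero condition at the unit scale. Summing dyadically in $j$ and undoing the rescaling, a standard Littlewood--Paley telescoping argument yields
\begin{equation*}
|K_T(x,y)| \lesssim \frac{1}{\Vol{\B{X}{d}{x}{\rho(x,y)}}}, \qquad |K_T(x,y) - K_T(x',y)| \lesssim \frac{\rho(x,x')^\epsilon}{\rho(x,y)^\epsilon\, \Vol{\B{X}{d}{x}{\rho(x,y)}}}
\end{equation*}
for $\rho(x,x') \le \tfrac{1}{2}\rho(x,y)$, together with the symmetric estimate in $y$. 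Combined with the $L^2$ boundedness from \cite{StreetMultiParameterSingRadonLt}, the Coifman--Weiss theory of singular integrals on spaces of homogeneous type then gives $L^p$ boundedness for $1 < p < \infty$.

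The main obstacle will be verifying the uniformity needed to sum the dyadic pieces: one must check that the rescaling map $\Phi$, which depends on the base point $x_0$, produces estimates uniform in $x_0 \in \K$ and in $j$, and that the mean-zero property $\int \vsig_j = 0$ indeed supplies the telescoping cancellation required for the H\"older smoothness of $K_T$ at scale $\rho(x,y)$. Since Theorem \ref{ThmMainCCThm} is designed precisely to supply such uniform rescaling, this reduces to a dyadic decomposition argument in the spirit of \cite{ChristNagelSteinWaingerSingularAndMaximalRadonTransforms}.
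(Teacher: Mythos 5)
Your overall strategy — rescale to unit scale via $\Phi$, prove uniform Schwartz kernel bounds for the dyadic pieces $T_j$, invoke the $L^2$ theory from \cite{StreetMultiParameterSingRadonLt}, and conclude by Calder\'on--Zygmund theory on a space of homogeneous type — is exactly the paper's argument in the case where $X_1,\ldots,X_q$ span the tangent space at each point of $\K$ (Section \ref{SectionCZSpan}). The telescoping H\"older estimates you state are a variant of (and equivalent to) the paper's differential inequalities \eqref{EqnToShowSmoothSingInt}, so that part of the proposal is sound.

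There is, however, a genuine gap when the vector fields do \emph{not} span. You correctly observe that Frobenius foliates a neighborhood of $\K$ and that $\gamma_t$ preserves leaves, so $T$ acts leafwise. But from there you assert that one can apply Coifman--Weiss ``on each leaf'' and obtain $L^p(\R^n)$ boundedness. This step is not justified: the CC balls $\B{X}{d}{x}{\delta}$ are $n_0(x)$-dimensional subsets of $\R^n$ (contained in the leaves), so they have \emph{zero} $\R^n$-Lebesgue measure, and the CC quasi-metric does not make the ambient set $\K$ into a space of homogeneous type for $dx$. Coifman--Weiss gives at best an estimate
\begin{equation*}
\int_{\text{leaf}} |Tf|^p \: d(\text{leaf measure}) \lesssim \int_{\text{leaf}} |f|^p \: d(\text{leaf measure}),
\end{equation*}
and you must then integrate this over the leaf space against the quotient measure, with uniform control on the constants across leaves, to recover $\LpN{p}{Tf}\lesssim\LpN{p}{f}$. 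This disintegration is nontrivial because the family of leaves need not be a fibration, the leaf dimension $n_0(x)$ can jump, and the ``quotient measure'' is not canonical. The paper addresses precisely this via Propositions \ref{PropIntOfAvgs} and \ref{PropLpAvgBound}: it defines the averaging operator $A_{\B{X}{d}{\cdot}{\delta}}$ over CC balls with the induced leaf measure, proves the pointwise bound $A_{\B{X}{d}{\cdot}{\xi'}}|Tf|^p \lesssim A_{\B{X}{d}{\cdot}{2\xi'}}|f|^p$ by pulling back to the unit-scale chart where the vector fields \emph{do} span, and then invokes the two-sided comparability $\int_{K_2} f \lesssim \int_{K_1} A_{\B{X}{d}{\cdot}{\xi'}} f \lesssim \int_{\K} f$ (Proposition 6.17 of \cite{StreetMultiParameterCCBalls}) to convert the pointwise bound into an $L^p(\R^n)$ bound. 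Your proposal names the foliation as the organizing structure but does not supply this crucial passage from leafwise to global estimates, and the naive ``Coifman--Weiss on each leaf'' route would not close without it.
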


In addition we will need a maximal theorem.  Take $\psi_1,\psi_2\in C_0^\infty\q( \R^n\w)$ supported on the interior of $\K$ with $\psi_1\geq 0$, and define
\begin{equation*}
\sM f\q( x\w) = \sup_{\delta\in \q(0,1\w]} \psi_1\q( x\w)\int_{\q|t\w|<a} \q| f\q( \gamma_{\delta t}\q( x\w)\w) \psi_2\q( \gamma_{\delta t}\q( x\w)\w)\w|\: dt.
\end{equation*}
Then, we have,
\begin{thm}\label{ThmSingleParamMax}
For $a>0$ sufficiently small, $\LpN{p}{\sM f} \lesssim \LpN{p}{f}$, for $1<p\leq\infty$.
\end{thm}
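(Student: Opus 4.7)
The plan is to dominate $\sM f\q(x\w)$ pointwise by the Hardy-Littlewood maximal function on the Carnot-Carath\'eodory leaf through $x$ -- which is a space of homogeneous type by Remark \ref{RmkSingleParamHomogType} -- and then pass from the resulting leafwise $L^p$ estimate to a bound on $L^p\q(\R^n\w)$ via a Frobenius-type Fubini argument.

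For the pointwise domination, fix $x_0\in \K$ and $\delta\in \q(0,1\w]$, and apply Theorem \ref{ThmMainCCThm} to the rescaled list $Z_j:=\delta^{d_j}X_j$ to obtain a chart $\Phi=\Phi_{x_0,\delta}:B^{n_0}\q(\eta_1\w)\to \R^n$, with image inside $\B{X}{d}{x_0}{\xi\delta}$, in which the pulled-back vector fields $Y_j$ are uniformly smooth (by \eqref{EqnRescaledSmooth}) and some subset $Y_{J_0}$ satisfies $\q|\det_{n_0\times n_0}Y_{J_0}\q(u\w)\w|\approx 1$ uniformly on $B^{n_0}\q(\eta_1\w)$ by \eqref{EqnRescaledSpan}. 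In these coordinates, $\gamma_{\delta t}\q(\Phi\q(0\w)\w)=\Phi\q(e^{t\cdot Y}\q(0\w)\w)$, and for $a$ admissibly small the curve $t\mapsto e^{t\cdot Y}\q(0\w)$ remains in $B^{n_0}\q(\eta_1/2\w)$. Writing $t=\q(t_{J_0},t_{J_0^c}\w)$, an application of Fubini combined with a change of variables in the $t_{J_0}$-slices -- whose Jacobian is controlled below by $\q|\det_{n_0\times n_0}Y_{J_0}\w|\gtrsim 1$ -- should yield
\begin{equation*}
\int_{\q|t\w|<a}\q|g\q(e^{t\cdot Y}\q(0\w)\w)\w|\:dt\lesssim \int_{B^{n_0}\q(Ca\w)}\q|g\q(u\w)\w|\:du
\end{equation*}
for any nonnegative $g$ on $B^{n_0}\q(\eta_1\w)$. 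Unraveling through $\Phi$, using $\Vol{\B{X}{d}{x_0}{\delta}}\approx \q|\det_{n_0\times n_0}\delta X\q(x_0\w)\w|$, and applying Proposition \ref{PropExtraCCStuff} to compare Euclidean balls in the chart with Carnot-Carath\'eodory balls, one then obtains the leafwise averaging bound
\begin{equation*}
\int_{\q|t\w|<a}\q|\q(f\psi_2\w)\q(\gamma_{\delta t}\q(x_0\w)\w)\w|\:dt\lesssim \frac{1}{\Vol{\B{X}{d}{x_0}{\delta}}}\int_{\B{X}{d}{x_0}{C\delta}}\q|f\psi_2\q(y\w)\w|\:dy,
\end{equation*}
where the right-hand integral is taken against the intrinsic Lebesgue measure on the leaf through $x_0$ inherited from the chart $\Phi$. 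Taking the supremum in $\delta$ and invoking the doubling estimate \eqref{EqnSingleParamHomogType} shows that $\sM f\q(x_0\w)$ is dominated by the Hardy-Littlewood maximal function of $\q|f\psi_2\w|$ on that leaf.

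The Coifman-Weiss maximal theorem on spaces of homogeneous type then yields $L^p$ boundedness leafwise for $1<p\leq \infty$, with a constant that is uniform across leaves because the doubling constants are admissible. To convert this into a bound on $L^p\q(\R^n\w)$, I would apply the Frobenius theorem to the (involutive, finitely generated) distribution generated by $X_1,\ldots,X_q$ to express $\R^n$ locally as a product of a leaf with a transversal, and then use Fubini, together with the comparability between the intrinsic leaf measure and the ambient Lebesgue measure in such flow-box coordinates. The main obstacle will be the possible variation across $\K$ of the leaf dimension $n_0\q(x_0\w)=\dspan{X_1\q(x_0\w),\ldots,X_q\q(x_0\w)}$: one must stratify $\K$ by rank (using lower semi-continuity of the rank together with the finite-generation hypothesis) and use a partition of unity adapted to this stratification, checking that the admissible constants -- in particular the doubling constants and the uniform bounds on the rescaled $Y_j$ -- remain controlled as one crosses stratum boundaries.
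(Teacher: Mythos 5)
Your proposal captures the correct conceptual spine — dominate pointwise by a leafwise Hardy--Littlewood maximal function via the rescaled charts from Theorem \ref{ThmMainCCThm}, then pass from leafwise bounds to an ambient $L^p$ bound — but it takes a different, and at a key point underdeveloped, route from the paper. The paper handles the non-spanning case by proving a pointwise estimate on \emph{averages}, namely
$A_{\B{X}{d}{\cdot}{\xi'}}\q|\sM f\w|^p\q(x\w)\lesssim A_{\B{X}{d}{\cdot}{2\xi'}}\q|f\w|^p\q(x\w)$, obtained by pulling $\sM$ back through the \emph{unit-scale} chart $\Phi_{x_0}$ to the leaf (where the pulled-back fields $Y$ span uniformly) and applying the spanning case; it then concludes by Proposition \ref{PropIntOfAvgs} (Proposition 6.17 of \cite{StreetMultiParameterCCBalls}), which asserts directly that integrating these leafwise averages over $\K$ is comparable to the ambient Lebesgue integral. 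The Coifman--Weiss maximal theorem is used exactly once, inside the spanning case; it is never applied leaf by leaf, and the $\delta$-rescaled charts $\Phi_{x_0,\delta}$ that you use live entirely inside that spanning-case argument.

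The genuine gap in your proposal is the last step — ``Frobenius + Fubini + stratification by rank.'' First, condition $\sD$ only gives that the module generated by $X_1,\ldots,X_q$ is finitely generated; it does not give constant rank, so the classical Frobenius theorem does not produce a flow-box straightening near points where $n_0\q(x\w)$ jumps, and the rank strata need not be smooth submanifolds in general. Second, even off the singular set, the assertion that the intrinsic leaf measure disintegrates ambient Lebesgue measure with uniformly comparable Jacobian is not a soft Fubini observation — it is precisely the quantitative content of Proposition \ref{PropIntOfAvgs}, and it is what makes the non-spanning case nontrivial. Your own caveat about ``checking that the admissible constants remain controlled as one crosses stratum boundaries'' is exactly where the argument would stall: the paper's whole point in quoting Proposition \ref{PropIntOfAvgs} is to avoid any decomposition by rank. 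If you replace the stratification-and-Fubini paragraph with an invocation of Proposition \ref{PropIntOfAvgs}, and recast your leafwise HL domination as the pointwise bound on $\xi'$-averages (via the unit-scale chart, as in Proposition \ref{PropLpAvgBound} and the remark closing Section \ref{SectionCZNoSpan}), the argument becomes the paper's proof.
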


Note that Theorems \ref{ThmSingleParamSingInt} and \ref{ThmSingleParamMax}
are special cases of Theorems \ref{ThmMainThmSecondPass} and
\ref{ThmMainMaxThm}, respectively (this is proved in Section 17.1
of \cite{StreetMultiParameterSingRadonLt}).  However, we will
see that Theorems \ref{ThmSingleParamSingInt} and
\ref{ThmSingleParamMax} can be proven by reduction to
the classical Calder\'on-Zygmund theory.  We will then use
these theorems to develop an appropriate Littlewood-Paley
theory, with which to prove Theorems \ref{ThmMainThmSecondPass}
and \ref{ThmMainMaxThm}.

We separate the proofs of Theorem \ref{ThmSingleParamSingInt} and
\ref{ThmSingleParamMax} into two cases.  In the first case, we prove
the results under the assumption that $X_1,\ldots, X_q$ span the tangent
space at each point of $\K$; this is covered in Section \ref{SectionCZSpan}.
Then, we use the results in Section \ref{SectionCZSpan} to prove
the more general case when $X_1,\ldots, X_q$ do not necessarily
span the tangent space at each point; this is covered in
Section \ref{SectionCZNoSpan}.

	\subsection{When the vector fields span}\label{SectionCZSpan}
In this section, we prove Theorems \ref{ThmSingleParamSingInt}
and \ref{ThmSingleParamMax} under the additional
assumption that $\inf_{x\in \K} \q|\det_{n\times n} X\q( x\w) \w|\gtrsim 1$.
We will then be able to apply this special case to each leaf,
in order to obtain the more general statement
of Theorems \ref{ThmSingleParamSingInt} and \ref{ThmSingleParamMax}.

\begin{rmk}
In this particular case, Theorems \ref{ThmSingleParamSingInt}
and \ref{ThmSingleParamMax} (and the methods
used in this section) are already well understood.
If fact, as we will see, when the vector fields span the operators
in Theorem \ref{ThmSingleParamSingInt} are just Calder\'on-Zygmund singular integrals corresponding
to the space of homogeneous type given by the balls $\B{X}{d}{x}{\delta}$.  The maximal function
$\sM$ is comparable to the usual maximal function associated to this space of homogeneous type.
See \cite{NagelRosaySteinWaingerEstimatesForTheBergmanAndSzegoKernels,KoenigOnMaximalSobolevAndHolderEstimatesForTheTangentialCR}
for similar methods.  
One could also use the methods of \cite{ChristNagelSteinWaingerSingularAndMaximalRadonTransforms} to prove the results
in this section, but those methods are much stronger than are needed for this simple special case.
In any case, we do not know of a reference that has these
results in the exact form we need them and so include
the short proof here.
\end{rmk}

We focus now on the proof of Theorem \ref{ThmSingleParamSingInt}
and will explain the proof of Theorem \ref{ThmSingleParamMax}
at the end of the section.
Thus, let $T$ be as in Theorem \ref{ThmSingleParamSingInt}.
We already know from the theory in \cite{StreetMultiParameterSingRadonLt}
that
\begin{equation*}
\LpOpN{2}{T}\lesssim 1.
\end{equation*}
Our goal is to show that $T$ is a Calder\'on-Zygmund singular
integral operator and it will follow that $T$ is bounded on
$L^p$ for $1<p\leq 2$.  Since the class of operators
discussed in Theorem \ref{ThmSingleParamSingInt} is self-adjoint 
it will follow that $T$ is bounded on $L^p$ for $1<p<\infty$.

\begin{rmk}
Actually, it is not hard to see that, instead of using the $L^2$ theory
in \cite{StreetMultiParameterSingRadonLt}, we could apply the $T\q( b\w)$
theorem to obtain the $L^p$ boundedness of $T$.  We leave this approach
to the interested reader.
\end{rmk}

Let $\rho\q( x,y\w)$ be the Carnot-Carath\'eodory metric corresponding
to the vector fields with formal degrees $\q(X_1,d_1\w),\ldots, \q(X_q,d_q\w)$.  That is,
\begin{equation*}
\rho\q( x,y\w) = \inf \q\{\delta>0 : y\in \B{X}{d}{x}{\delta}\w\}.
\end{equation*}
Let $\Kt\q( x,y\w)$ denote the Schwartz kernel of $T$.  We wish to show
that 
\begin{equation}\label{EqnToShowSingInt}
\int_{\B{X}{d}{y_1}{2\delta}^{c}} \q| \Kt\q(x,y_1 \w)-\Kt\q( x,y_2\w) \w|\: dx\lesssim 1, \text{ if }y_2\in \B{X}{d}{y_1}{\delta},
\end{equation}
and the $L^p$ boundedness ($1<p\leq 2$) of $T$ will follow from the classical
theory of Calder\'on-Zygmund singular integrals (see, e.g., Theorem 3
on page 19 of \cite{SteinHarmonicAnalysis}).  This uses the
fact that the balls $\B{X}{d}{\cdot}{\cdot}$ form a space of
homogeneous type, as discussed in Remark \ref{RmkSingleParamHomogType}. 

We now turn to proving \eqref{EqnToShowSingInt}.
As is well known, it suffices to prove the inequality,
\begin{equation}\label{EqnToShowSmoothSingInt}
\q|X_x^{\alpha} X_y^{\beta} \Kt\q( x,y\w)\w|\lesssim \frac{\rho\q( x,y\w)^{-\deg\q(\alpha\w)-\deg\q(\beta\w)} }{\Vol{\B{X}{d}{x}{\rho\q( x,y\w)} }},
\end{equation}
where $X_x$ denotes the list of vector fields $\q( X_1,\ldots, X_q\w)$ thought
of a partial differential operators in the $x$ variable, $\alpha$ denotes
an ordered multi-index, and
\begin{equation*}
\deg\q( \alpha\w) = \sum_{j=1}^q k_j d_j,
\end{equation*}
where $k_j$ is the number of times $j$ appears in the ordered multi-index
$\alpha$.
Similarly for $X_y$ and $\beta$.  Actually,
it would suffice to prove \eqref{EqnToShowSmoothSingInt} in the special
case $\q|\alpha\w|=0$, $\q|\beta\w|=1$, but this is no simpler to prove.

For $j\in \N$, let $T_j$ be the operator given by
\begin{equation*}
T_j f\q( x\w) = \psi_1\q( x\w) \int f\q( \gamma_t\q( x\w) \w) \psi_2\q( \gamma_t\q( x\w) \w) \kappa\q( t,x\w) \dil{\vsig}{2^j}\q( t\w) \: dt.
\end{equation*}
Let $\Kt_j\q( x,y\w)$ be the Schwartz kernel of $T_j$.  Thus,
$\Kt = \sum_{j\in \N} \Kt_j$.
To prove \eqref{EqnToShowSmoothSingInt}, it suffices to show that there
is an $a>0$ (independent of $j$) such that, when $\Kt_j$ is defined as above,
we have,
\begin{itemize}
\item $\Kt_j\q(x,y\w)$ is supported on $\q\{\q( x,y\w): \rho\q( x,y\w) \leq \xi_1 2^{-j}\w\}$, where $\xi_1$ is a constant, independent of $j$, and
\item $\q|\q(2^{-j}X_x \w)^\alpha \q(2^{-j} X_y \w)^{\beta} \Kt_j\q( x,y\w)\w|\lesssim \frac{1}{\Vol{\B{X}{d}{x}{2^{-j}}}}$, where, as usual, $\delta X$ denotes
the list of vector fields $\delta^{d_1} X_1,\ldots, \delta^{d_q} X_q$.
\end{itemize}

To prove the above we apply Theorem \ref{ThmMainCCThm} to the list of
vector fields $\q( 2^{-j}X,d\w)$ with $x_0\in \K$.  Note that all
of the assumptions in that section hold uniformly for $x_0\in \K$
and $j\in \N$.  Thus we obtain, $\eta_1,\xi_1>0$ and for each
$x_0\in \K$ and $j\in \N$ a map,
\begin{equation*}
\Phi_{j,x_0}:B^{n}\q( \eta_1\w) \rightarrow \B{X}{d}{x_0}{\xi 2^{-j}},
\end{equation*}
as in Theorem \ref{ThmMainCCThm}.
To prove the claim about the support of $\Kt_j$ it suffices to show
that for $x_0\in \K$ and $\q|t\w|\leq a$, we have,
\begin{equation}\label{EqnSingIntToShowSupport}
e^{t_1 2^{-jd_1} X_1+\cdots+ t_q 2^{-jd_q} X_q}x_0\in \B{X}{d}{x_0}{\xi_1 2^{-j}}.
\end{equation}
Let $Y_1,\ldots, Y_q$ denote the pullbacks of $X_1,\ldots, X_q$ via
$\Phi_{j,x_0}$.
Pulling \eqref{EqnSingIntToShowSupport} back via $\Phi_{j,x_0}$ it suffices to show,
\begin{equation*}
e^{t_1 Y_1+\cdots+t_q Y_q} 0 \in \B{Y}{d}{x_0}{\xi_1}.
\end{equation*}
Take $\eta'>0$ so small that $B^{n}\q( \eta'\w) \subseteq \B{Y}{d}{0}{\xi_1}$.
It is easy to see that this is possible, since $\xi_1\gtrsim 1$ and $\inf_{u\in B^{n}\q( \eta_1\w) }\q|\det_{n\times n} Y\q( u\w)\w|\gtrsim 1$.
Since $Y_1,\ldots, Y_q\in C^\infty$ uniformly in $j$ and $x_0$ (see Theorem \ref{ThmMainCCThm}),
it is follows that for $\q|t\w|\leq a$, with $a>0$ sufficiently small,
we have,
\begin{equation*}
e^{t_1Y_1+\cdots+ t_qY_q}0\in B^{n}\q( \eta'\w) \subseteq \B{Y}{d}{0}{\xi_1};
\end{equation*}
which completes the proof of the support of $\Kt_j$.

Since $\q|\det d\Phi_{j,x_0}\q( u\w)\w|\approx \Vol{\B{X}{d}{x_0}{2^{-j}}}$
for $u\in B^{n}\q( \eta_1\w)$ (and in light of the support of $\Kt_j$),
to prove the differential inequalities on $\Kt_j$, it suffices to show,
\begin{equation*}
\q|\q(\det d\Phi_{j,x_0}\q( v\w) \w) Y_u^\alpha Y_v^\beta \Kt_j\q(\Phi_{j,x_0}\q( u\w), \Phi_{j,x_0}\q( v\w) \w)\w|\lesssim 1,
\end{equation*}
where $u,v\in B^{n}\q( \eta_1\w)$.  Using that $Y_1,\ldots, Y_q$ and
$\Phi_{j,x_0}$ are $C^\infty$ uniformly in any relevant parameters, it suffices
to show that for all multi-indices $\alpha$ and $\beta$ (no longer ordered),
\begin{equation*}
\q|\partial_u^{\alpha} \partial_v^{\beta} \q( \Kt_j\q(\Phi_{j,x_0}\q( u\w), \Phi_{j,x_0}\q( v\w) \w)  \det d\Phi_{j,x_0}\q( v\w)\w) \w|\lesssim 1;
\end{equation*}
that is, that $ \Kt_j\q(\Phi_{j,x_0}\q( u\w), \Phi_{j,x_0}\q( v\w) \w)  \det d\Phi_{j,x_0}\q( v\w)$ is $C^\infty$ uniformly in any relevant parameters.

Let $\Phi^{\#}$ denote the map $\Phi^{\#} g = g\circ \Phi$.
Then, $ \Kt_j\q(\Phi_{j,x_0}\q( u\w), \Phi_{j,x_0}\q( v\w) \w) \det d\Phi_{j,x_0}\q( v\w)$
is the Schwartz kernel of the map
\begin{equation*}
\Tt_j = \Phi_{j,x_0}^{\#} T_j \q(\Phi_{j,x_0}^{\#}\w)^{-1}.
\end{equation*}
It is easy to see that,
\begin{equation*}
\Tt_j g\q( u\w) = \psi_1\q( \Phi_{j,x_0}\q( u\w) \w) \int g\q( \gt_t\q( u\w) \w) \psi_2\q( \gt_t\q( u\w)\w) \kappa\q( 2^{-j} t, \Phi_{j,x_0}\q( u\w) \w) \vsig_j\q( t\w) \: dt,
\end{equation*}
where $\gt_t\q( u\w) = e^{t_1Y_1+\cdots + t_qY_q} u$.
From Theorem \ref{ThmMainCCThm}, we have that,
\begin{equation*}
\q|\det Y_{J_1}\q( 0\w) \w|\gtrsim 1,
\end{equation*}
for some $J_1\in \sI{n}{q}$.  Without loss of generality, by reordering
the coordinates, we may assume $J_1=\q( 1,\ldots, n\w)$.
Recall that $\vsig_j$ is supported in $\Q^q\q( a\w)$.
For each $u$ and $t_{n+1},\ldots, t_q$ fixed, define the map,
\begin{equation*}
\Psi_{u,t_{n+1},\ldots, t_q}\q( t_1,\ldots, t_n\w)= e^{t_1 Y_1+\cdots +t_q Y_q}u.
\end{equation*}
Using the $C^\infty$ bounds for $Y_1,\ldots, Y_q$, we have that for
$\q|t\w|\leq a$ (with $a>0$ sufficiently small),
\begin{equation*}
\q|\det d \Psi_{u,t_{n+1},\ldots,t_q}\q( t_1,\ldots, t_n\w)  \w|\approx 1.
\end{equation*}
Applying the change of variables $v=\Psi_{u,t_{n+1},\ldots,t_q}\q( t_1,\ldots, t_n\w)$,
it is immediate to see that the Schwartz kernel of $\Tt_j$ is $C^\infty$
uniformly in any relevant parameters.
This completes the proof of Theorem \ref{ThmSingleParamSingInt}
in the case when $X_1,\ldots, X_q$ span the tangent space.

The proof of Theorem \ref{ThmSingleParamMax}, in this case, is merely
a simpler reprise of the above.  Indeed, the standard Calder\'on-Zygmund
theory shows that the maximal function,
\begin{equation*}
\sMt f\q( x\w) = \sup_{\delta\in \q(0,1\w]} \psi_1\q( x\w)\frac{1}{\Vol{\B{X}{d}{x}{\delta}} }\int_{y\in \B{X}{d}{x}{\delta}} \q|f\q( y\w) \psi_2\q( y\w)\w|\: dy,
\end{equation*}
is bounded on $L^p$ ($1<p\leq \infty$).
Hence we need only show that pointwise bound,
\begin{equation}\label{EqnToShowSpanMax}
\sM f\q( x\w) \lesssim \sMt f\q( x\w).
\end{equation}
Let 
\begin{equation*}
A_\delta f\q( x\w) = \psi_1\q( x\w) \int_{\q|t\w|\leq a} f\q( \gamma_{\delta t}\q( x\w)\w) \q|\psi_2\q( \gamma_{\delta t}\q( x\w)\w) \w|\: dt,
\end{equation*}
so that $\sM f\q( x\w) = \sup_{\delta\in \q(0,1\w]} A_\delta \q|f\w|$. 
To show \eqref{EqnToShowSpanMax}, it suffices to show, for $a>0$ sufficiently
small, independent of $\delta$,
\begin{itemize}
\item If $\Kt_\delta\q( x,y\w)$ is the Schwartz kernel of $A_\delta$, then $\Kt_\delta\q( x,y\w)$ is supported on $\q( x,y\w)$ such that
$y\in \B{X}{d}{x}{\delta}$.
\item $\q|\Kt_\delta\q( x,y\w)\w|\lesssim \frac{1}{\Vol{\B{X}{d}{x}{\delta} } }$.
\end{itemize}
This follows just as above.

	\subsection{When the vector fields do not span}\label{SectionCZNoSpan}
In this section, we complete the proof of Theorems
\ref{ThmSingleParamSingInt} and \ref{ThmSingleParamMax}
by proving the case when $X_1,\ldots, X_q$ do not span
the tangent space.
The idea, as outlined in Remark \ref{RmkSingleParamHomogType},
is to use the fact that the involutive
distribution generated by $X_1,\ldots, X_q$ is finitely generated
as a $C^\infty$ module.  In fact, in light of
$\sD\q( \K, \q[0,1\w], \Omega, \xi\w)$, $X_1,\ldots, X_q$ are
generators of this distribution (as a $C^\infty$ module).  
Because of this, the classical Frobenius theorem applies to foliate the ambient space into leaves;
$X_1,\ldots, X_q$ spanning the tangent space to each leaf.
The goal is to apply
the theory of Section \ref{SectionCZSpan} to each leaf.
We will be able to do this by utilizing the coordinate charts
on each leaf given to us by Theorem \ref{ThmMainCCThm}.

Let $n_0\q( x\w) = \dim\Span{X_1\q( x\w),\ldots X_q\q( x\w)}$.
Then, there exist $\eta_1, \xi_1>0$ such that 
for each $x\in \K$, we obtain a map
$$\Phi_{x}:B^{n_0 \q(x\w)}\q( \eta_1\w)\rightarrow \B{X}{d}{x}{\xi},$$
as in Theorem \ref{ThmMainCCThm}, by applying Theorem \ref{ThmMainCCThm}
to the vector fields $\q( Z,\dt\w) =\q( X,d\w)$.

Let $K_2\Subset K_0$ be the support of $\psi_1$,
and let $K_1$ be such that $K_2\Subset K_1\Subset K_0$.  Here $A \Subset B$
denotes that $A$ is a relatively compact subset of the interior of $B$.
For a function $f$ defined on $\Omega$, $\delta\leq \xi$, and $x\in \K$, let
\begin{equation*}
A_{\B{X}{d}{\cdot}{\delta}} f\q( x\w) = \frac{1}{\Vol{\B{X}{d}{x}{\delta}} } \int_{\B{X}{d}{x}{\delta}} f\q( y\w) \: dy,
\end{equation*}
where $\Vol{\B{X}{d}{x}{\delta}}$ denotes the induced Lebesgue measure 
of $\B{X}{d}{x}{\delta}$ on the leaf in
which $x$ lies.

We restate Proposition 6.17 of \cite{StreetMultiParameterCCBalls}.
\begin{prop}[Proposition 6.17 of \cite{StreetMultiParameterCCBalls}]\label{PropIntOfAvgs}
There exists a constant $\xi_0>0$, $\xi_0<\xi$, such that
for every $\xi'\leq \xi_0$ and every measurable function $f$ with $f\geq 0$,
we have
\begin{equation*}
\int_{K_2} f\q( x\w) \: dx\lesssim \int_{K_1} A_{\B{X}{d}{\cdot}{\xi'}} f\q( x\w) \: dx \lesssim \int_{\K} f\q( x\w) \: dx,
\end{equation*}
where the implicit constants may depend on a lower bound for $\xi'$.
\end{prop}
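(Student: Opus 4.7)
The inequalities compare ambient integrals with integrals of leaf-averages, so the natural strategy is to use the foliation produced by Frobenius and apply Fubini in adapted local coordinates. Under condition $\sD\q(\K,\q[0,1\w],\Omega',\xi\w)$ the involutive distribution generated by $X_1,\ldots,X_q$ is finitely generated as a $C^\infty$ module, and Theorem \ref{ThmMainCCThm} produces canonical charts $\Phi_x$ straightening each leaf; a transverse completion then yields ambient charts in which leaves are horizontal slices.

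I would first localize $K_1$ by a finite open cover subordinate to a partition of unity. On each piece, pick a base point $x_0$ at which $n_0\q(x_0\w)=\dim\Span{X_1\q(x_0\w),\ldots,X_q\q(x_0\w)}$ is locally maximal, and complete $\Phi_{x_0}:B^{n_0}\q(\eta_1\w)\to \B{X}{d}{x_0}{\xi_1}$ to an ambient diffeomorphism by prepending $n-n_0$ constant vector fields transverse to the leaf through $x_0$. In the resulting coordinates $\q(u,v\w)\in \R^{n_0}\times \R^{n-n_0}$, leaves through nearby points are slices $\q\{v=\mathrm{const}\w\}$, the ambient Jacobian is comparable to $\q|\det_{n_0\times n_0} X\q(x_0\w)\w|$ (Theorem \ref{ThmMainCCThm}), and Proposition \ref{PropExtraCCStuff} identifies the leaf ball $\B{X}{d}{x}{\xi'}$ with the image under $\Phi_x$ of a Euclidean ball of radius $\approx \xi'$ and leaf-volume $\approx \Vol{\B{X}{d}{x}{\xi'}}$.

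With these coordinates in place, a Fubini swap between the base point $x$ and the running point $y\in \B{X}{d}{x}{\xi'}$ rewrites the middle quantity as $\int f\q(y\w)\omega_{\xi'}\q(y\w)\: dy$ for a weight $\omega_{\xi'}$ obtained by integrating in the $x$ variable along the leaf of $y$. The right inequality then reduces to the pointwise bound $\omega_{\xi'}\q(y\w)\lesssim 1$, which follows from the symmetry of the Carnot--Carath\'eodory balls ($y\in \B{X}{d}{x}{\xi'}\Leftrightarrow x\in \B{X}{d}{y}{\xi'}$) combined with the doubling identity \eqref{EqnSingleParamHomogType}. The left inequality reduces to $\omega_{\xi'}\q(y\w)\gtrsim_{\xi'} 1$ for $y\in K_2$; because $K_2\Subset K_1$, I would choose $\xi_0$ small enough that every leaf ball $\B{X}{d}{y}{\xi_0}$ with $y\in K_2$ lies inside $K_1$ on the leaf of $y$, whence the integrand defining $\omega_{\xi'}\q(y\w)$ takes its full value on a set of leaf measure $\approx \Vol{\B{X}{d}{y}{\xi'}}$, and this cancels the normalizing factor $\Vol{\B{X}{d}{x}{\xi'}}^{-1}$ by doubling.

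The main obstacle is that $n_0\q(x\w)$ need not be constant on $\K$, so the transverse completion above is smooth only where the rank is locally maximal. I would address this by stratifying $\K$ by the rank of $\q(X_1,\ldots,X_q\w)$, which is lower semicontinuous, and running the Fubini argument on each relatively open stratum separately; since the bounds on $\omega_{\xi'}$ are pointwise in $y$ and do not depend on the stratum, they glue to give the claimed global two-sided inequality.
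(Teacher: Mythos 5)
The proposition is quoted from Proposition 6.17 of \cite{StreetMultiParameterCCBalls}; the present paper does not prove it, so there is no in-paper argument to compare against. Judged on its own merits, your Fubini-plus-doubling-plus-symmetry outline is the natural and almost certainly the correct strategy: swap the roles of the center $x$ and the running point $y$, use exact symmetry of the sub-unit ball $B_X(x_0)\ni y \Leftrightarrow B_X(y)\ni x_0$, and cancel the normalizing volume against the transverse integral via the doubling estimate \eqref{EqnSingleParamHomogType} and the Jacobian comparability $\q|\det_{n_0\times n_0} d\Phi\w|\approx\q|\det_{n_0\times n_0}X\q(x_0\w)\w|$ from Theorem \ref{ThmMainCCThm}.

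Two places need more care before this becomes a proof. First, the ``transverse completion'' step: writing $\Theta\q(u,v\w)=e^{u\cdot X_{J_0}}\bigl(e^{v\cdot T}x_0\bigr)$ (transverse flow applied first) does give a local diffeomorphism whose slices $\q\{v=\mathrm{const}\w\}$ are everywhere tangent to $\mathrm{span}\q(X_{J_0}\w)$, but those slices are open pieces of leaves \emph{only} where the distribution has locally constant rank; moreover $J_0=J_0\q(x_0\w)$ is chosen by maximizing a determinant and need not vary continuously with the base point, so the chart must be localized to a region where one $J_0$ is uniformly quasi-optimal. Second, and more seriously, the stratification step is where the actual difficulty of the proposition lives. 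On the closure of a fixed-rank stratum the foliation chart degenerates, and the Fubini change of variables involves a ratio of the ambient Jacobian to the leaf Jacobian whose uniform control across strata is exactly what needs proving; it is not automatic from ``the bounds are pointwise.'' You would need to show that the comparability $\omega_{\xi'}\q(y\w)\approx 1$ is \emph{admissible} in the sense of Section \ref{SectionCCGeomII} — i.e., its constants depend only on the quantities controlled by condition $\sD\q(\K,\q[0,1\w],\Omega',\xi\w)$ and not on the local rank — and then invoke that $\int_{K_1}A_{\B{X}{d}{\cdot}{\xi'}}f\,dx$ decomposes measurably over strata. That uniformity is precisely what the admissible-constant machinery of Theorem \ref{ThmMainCCThm} (and Proposition \ref{PropExtraCCStuff}) is designed to deliver, so the gap is closeable, but as written the proposal asserts the gluing rather than deriving it.
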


We will prove,
\begin{prop}\label{PropLpAvgBound}
Let $\xi'=\frac{\xi_1}{4}\wedge \frac{\xi_0}{2}$, then we have the pointwise
bound for $1<p<\infty$, $x\in \K$,
\begin{equation*}
A_{\B{X}{d}{\cdot}{\xi'}} \q| T f\w|^p \q( x\w) \lesssim A_{\B{X}{d}{\cdot}{2\xi'}} \q|f\w|^p\q( x\w),
\end{equation*}
where the implicit constant may depend on $p$, and we have taken $a>0$
sufficiently small, in the definition of $T$.
\end{prop}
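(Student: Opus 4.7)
The strategy is to localize to the leaf of the involutive distribution through $x$ using the chart $\Phi_x$ provided by Theorem \ref{ThmMainCCThm}, where the pulled-back vector fields span the tangent space, and then invoke the span case established in Section \ref{SectionCZSpan}.

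Fix $x \in \K$, set $n_0 = \dspan{X_1\q(x\w),\ldots,X_q\q(x\w)}$, and apply Theorem \ref{ThmMainCCThm} to $\q(X,d\w)$ at base point $x$ to obtain admissible constants $\eta_1, \xi_1 > 0$ and the map $\Phi = \Phi_x : B^{n_0}\q(\eta_1\w) \to \B{X}{d}{x}{\xi_1}$, with pulled-back vector fields $Y_1,\ldots,Y_q$ on $B^{n_0}\q(\eta_1\w)$ that are $C^\infty$ uniformly in $x$ and span the tangent space uniformly by \eqref{EqnRescaledSmooth} and \eqref{EqnRescaledSpan}. Since $\xi' \leq \xi_1/4$, Theorem \ref{ThmMainCCThm} already yields $\B{X}{d}{x}{\xi'} \subseteq \Phi\q(B^{n_0}\q(\eta_1\w)\w)$; two applications of Proposition \ref{PropExtraCCStuff} furnish admissible $\eta^* \in \q(0,\eta_1\w)$ and $\xi^{**} \in \q(0,2\xi'\w)$ such that $\Phi\q(B^{n_0}\q(\eta^*\w)\w) \subseteq \B{X}{d}{x}{2\xi'}$ and $\B{X}{d}{x}{\xi^{**}} \subseteq \Phi\q(B^{n_0}\q(\eta^*\w)\w)$. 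Because $\q(X,d\w)$ controls $\gamma_t$, the vector field $W\q(t,\cdot\w)$ lies in $\Span{X_1,\ldots,X_q}$, so each curve $\epsilon \mapsto \gamma_{\epsilon t}\q(y\w)$ stays on the leaf through $y$; moreover, for $|t| \leq a$ with $a$ chosen admissibly small, $\gamma_t\q(y\w)$ lies at Carnot-Carath\'eodory distance much less than $\xi^{**}$ from $y$. Picking a cutoff $\chi \in C_0^\infty\q(\Phi\q(B^{n_0}\q(\eta^*\w)\w)\w)$ equal to $1$ on a set containing $\B{X}{d}{x}{\xi'}$, we have $Tf = T\q(\chi f\w)$ pointwise on $\B{X}{d}{x}{\xi'}$.

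Define $\widetilde{T} := \Phi^{\#} T \q(\Phi^{\#}\w)^{-1}$, acting on functions on $B^{n_0}\q(\eta_1\w)$. The same computation used in Section \ref{SectionCZSpan} to pass from $T_j$ to $\Tt_j$ shows that $\widetilde{T}$ is a single-parameter singular Radon transform of the form treated by Theorem \ref{ThmSingleParamSingInt}, now built from the uniformly spanning vector fields $\q(Y,d\w)$, with all relevant data admissible. Hence the span case proved in Section \ref{SectionCZSpan} gives $L^p\q(B^{n_0}\q(\eta_1\w)\w)$-boundedness of $\widetilde{T}$ with an admissible constant. Translating back through $\Phi$ and using $|\det d\Phi| \approx \Vol{\B{X}{d}{x}{\xi_1}} \approx \Vol{\B{X}{d}{x}{\xi'}} \approx \Vol{\B{X}{d}{x}{2\xi'}}$ (Theorem \ref{ThmMainCCThm} and Remark \ref{RmkSingleParamHomogType}) converts this into
$$\int_{\B{X}{d}{x}{\xi'}} |Tf\q(y\w)|^p\, dy \lesssim \int_{\Phi\q(B^{n_0}\q(\eta^*\w)\w)} |f\q(y\w)|^p\, dy \leq \int_{\B{X}{d}{x}{2\xi'}} |f\q(y\w)|^p\, dy,$$
and dividing by the common volume yields the desired pointwise inequality.

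The chief obstacle is uniformity in $x$: every constant entering the reduction --- the admissible quantities of Theorem \ref{ThmMainCCThm}, the radii $\eta^*,\xi^{**}$ from Proposition \ref{PropExtraCCStuff}, the threshold $a$, and the $L^p$-bound for $\widetilde{T}$ --- must be independent of $x \in \K$, despite possible jumps in $n_0\q(x\w)$ along nearby leaves. This is precisely what the admissibility calculus of Section \ref{SectionCCGeomII}, together with the parameter-free nature of the calculation in Section \ref{SectionCZSpan}, is designed to guarantee.
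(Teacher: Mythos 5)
Your proof is correct and follows essentially the same route as the paper: fix $x$, pull back through the chart $\Phi_x$ of Theorem \ref{ThmMainCCThm} to get uniformly spanning, uniformly smooth vector fields $Y$, invoke the span case from Section \ref{SectionCZSpan} for $\Phi_x^{\#}T(\Phi_x^{\#})^{-1}$, and convert the ball averages by the change of variables $x=\Phi_{x_0}(u)$ using the Jacobian comparability from Theorem \ref{ThmMainCCThm}. The paper packages the change-of-variables step as Lemma \ref{LemmaCZNospan} and phrases the reduced estimate as a restricted $L^p$ operator bound, whereas you insert an explicit cutoff $\chi$ and a pair of inner/outer ball inclusions via Proposition \ref{PropExtraCCStuff} to justify the localization; these are equivalent ways of organizing the same localization argument, and your remark on uniformity in $x$ across leaves of varying dimension is precisely the point the admissibility calculus addresses.
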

Before we prove Proposition \ref{PropLpAvgBound}, let us first see
how it yields Theorem \ref{ThmSingleParamSingInt}.

\begin{proof}[Proof of Theorem \ref{ThmSingleParamSingInt} given Propositions \ref{PropIntOfAvgs} and \ref{PropLpAvgBound}]
Letting $\xi'$ be as in Proposition \ref{PropLpAvgBound}, we have,
\begin{equation*}
\begin{split}
\LpN{p}{Tf}^p &= \int_{K_2} \q|T f\q( x\w)\w|^p \: dx\\
&\lesssim \int_{K_1} A_{\B{X}{d}{\cdot}{\xi'}} \q| T f\w|^p \q( x\w)\: dx\\
&\lesssim \int_{K_1} A_{\B{X}{d}{\cdot}{2\xi'}} \q|f\w|^p\q( x\w)\: dx\\
&\lesssim \int_{K_0} \q|f\q( x\w)\w|^p \: dx\\
&\lesssim \LpN{p}{f}^p,
\end{split}
\end{equation*}
completing the proof.
\end{proof}

We now turn to the proof of Proposition \ref{PropLpAvgBound}.
It suffices to show that for each $x_0\in \K$, we have,
\begin{equation*}
A_{\B{X}{d}{\cdot}{\xi'}}\q|Tf\w|^p \q( \Phi_{x_0}\q( 0\w)\w) \lesssim A_{\B{X}{d}{\cdot}{2\xi'}}\q|f\w|^p\q( \Phi_{x_0}\q( 0\w)\w),
\end{equation*}
since $\Phi_{x_0}\q( 0\w) = x_0$.

Fix $x_0$ and let $Y_1,\ldots, Y_q$ be the pullbacks of $X_1,\ldots, X_q$
via the map $\Phi_{x_0}$ to $B^{n_0\q( x_0\w)}\q( \eta_1\w)$.  We have,
\begin{lemma}\label{LemmaCZNospan}
For $f\geq 0$ a measurable function defined on $\Omega$,
\begin{equation*}
A_{\B{X}{d}{\cdot}{\xi'}} f \q( x_0\w) \approx \int_{\B{Y}{d}{\cdot}{\xi'}} f\circ \Phi_{x_0} \q( u\w)\: du.
\end{equation*}
\end{lemma}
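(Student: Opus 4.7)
The plan is to deduce the lemma from a change of variables via $\Phi_{x_0}$, after establishing the set identity $\Phi_{x_0}^{-1}\q(\B{X}{d}{x_0}{\xi'}\w) = \B{Y}{d}{0}{\xi'}$ and then invoking the Jacobian and volume estimates of Theorem \ref{ThmMainCCThm}. Here I read the ``$\cdot$'' in $\B{Y}{d}{\cdot}{\xi'}$ as $\Phi_{x_0}^{-1}(x_0)=0$.

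First I would verify the set identity above. The inclusion $\supseteq$ is almost immediate: since $Y_1,\ldots,Y_q$ are the pullbacks of $X_1,\ldots,X_q$ under $\Phi_{x_0}$, any sub-unit curve for $\xi' Y$ in $B^{n_0}(\eta_1)$ starting at $0$ pushes forward to a sub-unit curve for $\xi' X$ in $\Phi_{x_0}\q(B^{n_0}(\eta_1)\w)\subseteq \Omega$ starting at $x_0$. The $\subseteq$ direction is the delicate one, and here I would use a rescaling trick: if $\gamma:\q[0,1\w]\rightarrow\Omega$ is a sub-unit curve for $\xi' X$ starting at $x_0$, then for each $s\in\q[0,1\w]$ the reparametrization $\tau\mapsto\gamma(s\tau)$ is a sub-unit curve for $\xi' X$ (its coefficient vector $s\,a(s\tau)$ still has $L^\infty$ norm strictly less than $1$), showing $\gamma(s)\in \B{X}{d}{x_0}{\xi'}$. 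Hence the image of $\gamma$ lies entirely in $\B{X}{d}{x_0}{\xi'}\subseteq \B{X}{d}{x_0}{\xi_1}\subseteq \Phi_{x_0}\q(B^{n_0}(\eta_1)\w)$ (using $\xi'\leq \xi_1/4\leq \xi_1$ and the containment in Theorem \ref{ThmMainCCThm}). Consequently $\Phi_{x_0}^{-1}\circ\gamma$ is a well-defined sub-unit curve for $\xi' Y$ in $B^{n_0}(\eta_1)$ starting at $0$.

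Next I would apply the change-of-variables formula for maps into submanifolds (as recalled in the notation section) to write
\begin{equation*}
\int_{\B{X}{d}{x_0}{\xi'}} f\q(y\w)\: dy = \int_{\B{Y}{d}{0}{\xi'}} f\q(\Phi_{x_0}\q(u\w)\w)\q|\Det{n_0}d\Phi_{x_0}\q(u\w)\w|\: du.
\end{equation*}
By Theorem \ref{ThmMainCCThm}, $\q|\Det{n_0}d\Phi_{x_0}\q(u\w)\w|\approx \q|\Det{n_0}X\q(x_0\w)\w|$ uniformly in $u\in B^{n_0}(\eta_1)$, and $\Vol{\B{X}{d}{x_0}{\xi_1}}\approx \q|\Det{n_0}X\q(x_0\w)\w|$. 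Since $\xi'$ is an admissible constant comparable to $\xi_1$, a bounded (admissible) number of iterations of the doubling estimate \eqref{EqnSingleParamHomogType} on the leaf through $x_0$ gives $\Vol{\B{X}{d}{x_0}{\xi'}}\approx \Vol{\B{X}{d}{x_0}{\xi_1}}\approx \q|\Det{n_0}X\q(x_0\w)\w|$. Substituting these comparisons into the identity above and dividing through by $\Vol{\B{X}{d}{x_0}{\xi'}}$ yields the claim.

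I expect the main obstacle to be the clean set identity $\Phi_{x_0}^{-1}\q(\B{X}{d}{x_0}{\xi'}\w)=\B{Y}{d}{0}{\xi'}$, since this is what forces \emph{every} $X$-curve of radius $\xi'$ to land inside the chart $\Phi_{x_0}\q(B^{n_0}(\eta_1)\w)$; the rescaling trick combined with the threshold $\xi'\leq \xi_1/4$ is exactly what makes this work, and the remaining Jacobian and volume comparisons are then essentially bookkeeping applications of Theorem \ref{ThmMainCCThm}.
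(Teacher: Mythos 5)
Your proof is correct and follows the same approach as the paper's: a change of variables via $\Phi_{x_0}$ based on the set identity $\Phi_{x_0}\q(\B{Y}{d}{0}{\xi'}\w)=\B{X}{d}{x_0}{\xi'}$, combined with the Jacobian and volume comparisons supplied by Theorem \ref{ThmMainCCThm}. The only difference is one of exposition: the paper states the set identity without proof and refers to (B.2) of \cite{StreetMultiParameterCCBalls} for the change-of-variables details, whereas you reconstruct the set identity from scratch via the reparametrization argument $\tau\mapsto\gamma(s\tau)$, which is precisely the mechanism that underlies the cited fact.
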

\begin{proof}
We apply a change of variables $x=\Phi_{x_0}\q( u\w)$, using that $\Phi_{x_0}\q( \B{Y}{d}{0}{\xi'}\w) = \B{X}{d}{x_0}{\xi'}$ and that $\q|\det_{n_0\q( x\w)\times n_0\q( x\w)} d\Phi_{x_0}\q( u\w)\w|\approx \Vol{\B{X}{d}{x_0}{\xi}}$.  See (B.2) of
\cite{StreetMultiParameterCCBalls} for details on this sort of change of variables.  It follows that,
\begin{equation*}
\begin{split}
\int_{\B{Y}{d}{0}{\xi'}} f\q( \Phi_{x_0}\q( u\w)\w) \: du &\approx \frac{1}{\Vol{\B{X}{d}{x_0}{\xi'}}} \int_{\B{X}{d}{x_0}{\xi'}} f\q( x\w) \: dx\\
& =A_{\B{X}{d}{\cdot}{\xi'}} f\q( x_0\w),
\end{split}
\end{equation*}
completing the proof.
\end{proof}

\begin{proof}[Completion of the proof of Proposition \ref{PropLpAvgBound}]
In light of Lemma \ref{LemmaCZNospan}, it suffices to show
\begin{equation*}
\int_{\B{Y}{d}{0}{\xi'}} \q|\q[\Phi_{x_0}^{\#} T \q(\Phi_{x_0}^{\#}\w)^{-1}\w] \Phi_{x_0}^{\#} f \q( u\w)\w|^p\: du \lesssim \int_{\B{Y}{d}{0}{2\xi'}} \q| \Phi_{x_0}^{\#} f\q( u\w)\w|^p\: du.
\end{equation*}
This will follow from,
\begin{equation}\label{EqnCZNospanToShowPulledBack}
\q\| \Phi_{x_0}^{\#} T \q(\Phi_{x_0}^{\#}\w)^{-1}  \w\|_{L^p\q(\B{Y}{d}{0}{\xi'} \w)\rightarrow L^p\q(\B{Y}{d}{0}{2\xi'} \w)}\lesssim 1,
\end{equation}
for $1<p<\infty$, with the implicit constant independent of $x_0$.

To prove \eqref{EqnCZNospanToShowPulledBack}, we apply the theory
in Section \ref{SectionCZSpan} to the operator $ \Phi_{x_0}^{\#} T \q(\Phi_{x_0}^{\#}\w)^{-1} $.
Note that,
\begin{equation*}
 \Phi_{x_0}^{\#} T \q(\Phi_{x_0}^{\#}\w)^{-1} g \q( u\w) = \psi_1\q( \Phi_{x_0}\q( u\w)\w) \int g\q( \gt_t\q( u\w) \w) \psi_2\q( \gt_t\q( u\w)\w) \kappa\q( t,\Phi\q( u\w)\w) K\q( t\w) \: dt,
\end{equation*}
where $\gt_t\q( u\w) = e^{t_1Y_1+\cdots+t_qY_q}u$.
We have, from Theorem \ref{ThmMainCCThm}, that
$\q|\det_{n_0\q( x\w) \times n_0\q( x\w)} Y\q( u\w)\w|\approx 1,$ for
$u\in B^{n_0\q( x\w)}\q( \eta_1\w)$.  That is, that $Y_1,\ldots, Y_q$
span the tangent space (uniformly in $x_0$).
It is easy to see that the methods in Section \ref{SectionCZSpan}
apply to the operator $\Phi_{x_0}^{\#} T \q(\Phi_{x_0}^{\#}\w)^{-1}$
uniformly in $x_0$, establishing \eqref{EqnCZNospanToShowPulledBack}
and completing the proof of Proposition \ref{PropLpAvgBound}.
\end{proof}

The proof of Theorem \ref{ThmSingleParamMax} follows by a simpler reprise
of the above.  See also Section 6.2 of \cite{StreetMultiParameterCCBalls}.

\section{Auxiliary operators}\label{SectionAuxOps}
In this section, we introduce a number of operators, which will be
useful in the proof of
Theorems \ref{ThmMainThmSecondPass} and \ref{ThmMainMaxThm}.
Before we begin, we pick four $C_0^\infty$ cut-off functions
$\denum{\psi}{0},\denum{\psi}{-1},\denum{\psi}{-2},\denum{\psi}{-3}\geq 0$, supported on the interior of $\K$ with
\begin{equation*}
\psi_1,\psi_2\prec \denum{\psi}{0}\prec \denum{\psi}{-1} \prec \denum{\psi}{-2}\prec \denum{\psi}{-3}.
\end{equation*}

In the statement of Theorem \ref{ThmMainThmSecondPass},
we took $K\in \sK\q( N,e,a,\nu,\nu\w)$ (recall, we are first
presenting the proof in the case $\mu_0=\nu$, and in Section \ref{SectionMoreKernels}
will present the necessary modifications to treat general $\mu_0$).  Thus,
\begin{equation*}
K\q( t\w) = \sum_{j\in \N^\nu} \dil{\vsig_j}{2^j}\q( t\w),
\end{equation*}
where $\q\{\vsig_j\w\}\subseteq C_0^\infty \q( \Q^N\q( a\w)\w)$
is a bounded set and the $\vsig_j$ satisfy certain
cancellation conditions (see Section \ref{SectionKernels} for details).
Hence, there is a corresponding decomposition of $T$.
We define, for $j\in \N^\nu$,
\begin{equation*}
T_j f\q( x\w) = \psi_1\q( x\w)  \int f\q( \gamma_t\q( x\w)\w) \psi_2\q( \gamma_t\q( x\w)\w) \kappa\q( t,x\w) \dil{\vsig_j}{2^j}\q( t\w) \: dt.
\end{equation*}
We have,
\begin{equation*}
\sum_{j\in \N^\nu} T_j = T.
\end{equation*}

We now turn to the operators which we will use to construct our Littlewood-Paley
theory.  For each $\mu$, $1\leq \mu\leq \nu$, we obtain a 
list of vector fields with single-parameter formal degrees $\q( X^\mu, d^\mu\w)$,
by letting $X^\mu_1,\ldots, X^\mu_{q_\mu}$ be those vector fields
$X_j$ such that $d_j$ is non-zero in only the $\mu$th component.
We then assign the formal degree to be $d_j^\mu$ (i.e., the value
of the non-zero component).  Using this definition,
\begin{equation*}
\q( \delta_\mu X^\mu, d^\mu\w) = \q( \deltah X, \sd\w),
\end{equation*}
where $\deltah$ is $\delta_\mu$ in the $\mu$th component and $0$
in all other components, and we have suppressed the vector fields
that are equal to $0$.  As a consequence, $\q( X^\mu, d^\mu\w)$
satisfies $\sD\q( \K, \q[0,1\w], \Omega', \xi\w)$, since
$\q( X,d\w)$ satisfies $\sD\q( \K, \q[0,1\w]^\nu, \Omega', \xi\w)$.

We define (single-parameter) dilations on $\R^{q_\mu}$ by,
\begin{equation}\label{EqnRqmuDil}
\delta \q( t_1,\ldots, t_{q_\mu}\w) = \q(\delta^{d_1^\mu}t_1, \ldots,\delta^{d_{q_\mu}^\mu} t_{q_\mu} \w).
\end{equation}
Let $\phi_\mu\in C_0^\infty\q( \Q^{q_\mu}\q( a\w) \w)$ be such that
$\int \phi_\mu =1$, and assume $\phi_\mu\geq 0$.  Define,
\begin{equation*}
\phi_{\mu,j} =
\begin{cases}
\phi_\mu & \text{if }j=0,\\
\dil{\phi_\mu}{2}-\phi_\mu & \text{if }j>0.
\end{cases}
\end{equation*}
Here, as usual,
$\dil{\phi_\mu}{2^{j}}\q( t\w) = 2^{j\q( d_1^\mu+\cdots+d_{q_\mu}^\mu\w)}\phi_\mu\q( 2^j t\w)$.  Define,
\begin{equation*}
\gh_{\q( t_1,\ldots, t_{q_\mu}\w)}^\mu \q( x\w) = e^{t_1 X_1^\mu + \cdots + t_{q_\mu} X_{q_\mu}^\mu }x.
\end{equation*}

For $j\in \N$, define,
\begin{equation*}
D_j^\mu f\q( x\w) = \denum{\psi}{-3}\q( x\w) \int f\q(\gh_t^\mu\q( x\w)\w) \denum{\psi}{-3}\q( \gh_t^\mu\q( x\w) \w) \dil{\phi_{\mu,j}}{2^j}\q( t\w)\: dt;
\end{equation*}
so that
$\sum_{j\in \N} D_j^\mu = \denum{\psi}{-3}^2.$
For $j=\q( j_1,\ldots, j_\nu\w) \in \N^\nu$, define,
\begin{equation}\label{EqnDefnD}
D_j = D_{j_1}^1 D_{j_2}^2\cdots D_{j_\nu}^\nu,
\end{equation}
so that,
\begin{equation*}
\sum_{j\in \N^\nu} D_j = \denum{\psi}{-3}^{2\nu}.
\end{equation*}
In Section \ref{SectionSquareFunc}, we will use the operators $D_j$
to create an appropriate Littlewood-Paley square function.

Now we turn to the operators which will be at the
basis of the study of the maximal function.
The study of the maximal function will proceed
by induction on the number of parameters ($\nu$),
with the base case being the trivial
case $\nu=0$ (we will explain this more in what follows).
In what follows, we introduce operators that will
facilitate this induction.

Let $\Ninf = \N\cup \q\{\infty\w\}$.
For a subset $E\subseteq \nuset$ and 
$j=\q( j_1,\ldots, j_\nu\w) \in \N^\nu$,
define $j_E\in \Ninf^\nu$ to be equal to $j_\mu$
in those components $\mu\in E$, and equal to $\infty$
in the rest of the components.
For $t\in \R^N$, we dilate $2^{-j_E} t$ in the usual way,
where we identify $2^{-\infty}=0$; thus,
$2^{-j_E} t$ is zero in every coordinate $t_j$ such
that $e_j^\mu\ne 0$ for some $\mu\in E^{c}$.
We may think of these dilations as $\q|E\w|$-parameter
dilations acting on a lower dimensional space
consisting of those coordinates which are not mapped
to $0$ under this dilation.
Notice that $j_{\nuset}=j$ and $j_{\emptyset} =\q(\infty,\infty,\cdots,\infty \w)$.

Let $\sigma\in C_0^\infty\q( \Q^N\q( a\w)\w)$ satisfy
$\sigma\geq 0$ and $\sigma\geq 1$ on a neighborhood of $0$.
We assume, further, that $\sigma$ is of the form,
\begin{equation*}
\sigma\q( t_1,\ldots, t_N\w) = \sigma_0\q( t_1\w)\cdots \sigma_0\q( t_N\w).
\end{equation*}
where $\sigma_0\in C_0^\infty\q( \R\w)$, is supported near $0$, is $\geq 0$,
and is $\geq 1$ on a neighborhood of $0$.
We define for $j\in \Ninf^\nu$,
\begin{equation*}
M_{j} f\q( x\w) = \denum{\psi}{0}\q( x\w) \int f\q(\gamma_{2^{-j}t }\q( x\w) \w) \denum{\psi}{0}\q( \gamma_{2^{-j} t}\q(x\w)\w) \sigma\q( t\w) \: dt.
\end{equation*}
Notice, 
\begin{equation}\label{EqnMemptyset}
M_{j_{\emptyset}} f\q( x\w) = \denum{\psi}{0}^2\q( x\w) \q[\int \sigma\q( t\w) \: dt \w] f\q( x\w).
\end{equation}
It is immediate to see,
\begin{equation}\label{EqnsMBoundDisc}
\sM f\q( x\w) \lesssim \sup_{j\in \N^\nu} M_j \q|f\w|\q( x\w) +  \denum{\psi}{0}\q( x\w) \int_{\q|t\w|\leq a} \q|f\q( \gamma_t\q( x\w)\w)\w|\denum{\psi}{0}\q( \gamma_t\q( x\w)\w) \: dt.
\end{equation}
The second term on the left hand side of \eqref{EqnsMBoundDisc}
is easy to control, and so to prove
Theorem \ref{ThmMainMaxThm}, it suffices to prove
the following proposition.
\begin{prop}\label{PropDiscMaxBound}
\begin{equation*}
\LpN{p}{\sup_{j\in \N^{\nu}} \q|M_j f\w|}\lesssim \LpN{p}{f},
\end{equation*}
for $1<p<\infty$.
\end{prop}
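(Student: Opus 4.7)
The plan is to induct on the ``number of active parameters''. For each subset $E\subseteq \nuset$ introduce
\[
\sM_E f\q(x\w) := \sup_{j\in \N^\nu}\q|M_{j_E} f\q(x\w)\w|,
\]
so that the proposition is the statement that $\sM_{\nuset}$ is bounded on $L^p$. The base case $E=\emptyset$ is immediate from \eqref{EqnMemptyset}, since $\sM_\emptyset f(x)$ equals a bounded multiple of $\denum{\psi}{0}^2\q(x\w)\q|f\q(x\w)\w|$. For the inductive step, assume $\sM_{E'}$ is bounded on $L^p$ for every $E'\subsetneq E$, pick any $\mu\in E$ and set $E' = E\setminus\q\{\mu\w\}$. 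The goal will be the pointwise inequality
\[
\sM_E f\q(x\w) \lesssim \sM^\mu\q[\sM_{E'}\q|f\w|\w]\q(x\w),
\]
where $\sM^\mu$ denotes the single-parameter maximal operator in the $\mu$th direction built from the list $\q(X^\mu,d^\mu\w)$. Since the hypotheses on $\q(X^\mu,d^\mu\w)$ noted after \eqref{EqnRqmuDil} are exactly those required by Theorem \ref{ThmSingleParamMax}, $\sM^\mu$ is bounded on $L^p$ for $1<p\leq \infty$. Combined with the inductive hypothesis, this yields the bound on $\sM_E$.

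To prove the key pointwise inequality, decompose the variable $t\in\R^N$ as $t=\q(t^\mu,t^{\mu c}\w)$, where $t^\mu$ collects the coordinates $t_j$ with $e_j^\mu\ne 0$ and $t^{\mu c}$ collects the rest. Because $\sigma$ is a tensor product of one-dimensional bumps, $\sigma\q(t\w)=\sigma^\mu\q(t^\mu\w)\sigma^{\mu c}\q(t^{\mu c}\w)$. Note also that scaling by $2^{-j_E}$ leaves $t^{\mu c}$ dependent only on $j_{E'}$ while $t^\mu$ is additionally scaled by $2^{-j_\mu\,\cdot}$. Freezing $t^{\mu c}$ and $j_{E'}$, the inner integral
\[
\int f\q(\gamma_{2^{-j_E}\q(t^\mu,t^{\mu c}\w)}\q(x\w)\w) \denum{\psi}{0}\q(\gamma_{2^{-j_E}\q(t^\mu,t^{\mu c}\w)}\q(x\w)\w) \sigma^\mu\q(t^\mu\w)\: dt^\mu
\]
is, with $j_\mu$ varying, a one-parameter family of averages along a curve in the $t^\mu$-direction. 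The control assumption on $\gamma_t$ (Definition \ref{DefnControlCurve}) combined with the way $\q(X,d\w)$ generates $\q(X^\mu,d^\mu\w)$ shows that this partial curve is controlled by the single-parameter list $\q(X^\mu,d^\mu\w)$ uniformly in the remaining data. A variant of Theorem \ref{ThmSingleParamMax} applied in the $t^\mu$-direction then dominates the supremum over $j_\mu\in\N$ pointwise by $\sM^\mu$ applied to the residual operator obtained by setting $j_\mu=\infty$, i.e., the operator $M_{j_{E'}}\q|f\w|$ after integrating out $t^{\mu c}$. Taking sup over $j_{E'}\in\N^{\nu-1}$ inside $\sM^\mu$ (which is legal by monotonicity) delivers the desired bound $\sM^\mu\q[\sM_{E'}\q|f\w|\w]$.

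The main obstacle I anticipate is precisely the verification of this ``partial control'' statement: the curve $t^\mu\mapsto \gamma_{2^{-j_E}\q(t^\mu,t^{\mu c}\w)}\q(x\w)$ must be shown to be controlled by the vector fields $\q(2^{-j_\mu d_k^\mu}X_k^\mu\w)$ with $C^\infty$ estimates uniform in $t^{\mu c}$ and in the frozen parameters $j_{E'}$. This requires carefully revisiting the Taylor decomposition \eqref{EqnDefnXjalpha} of $W$, separating pure-power in $\mu$ from mixed terms, and verifying that the non-pure powers as well as the $t^{\mu c}$-dependence contribute controlled remainders. A second subtlety is that Theorem \ref{ThmSingleParamMax} was stated for $\gamma$ of the exponential form, whereas we need it for a general one-parameter family; either the scaling-and-rescaling argument of Section \ref{SectionCZNoSpan} can be invoked on the ``frozen'' single-parameter family, or one establishes a more general single-parameter maximal theorem (whose proof is an easy adaptation of that of Theorem \ref{ThmSingleParamMax}) asserting $L^p$ boundedness for any $\gamma$ controlled by $\q(X^\mu,d^\mu\w)$. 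Once these two points are in place, the induction closes and Proposition \ref{PropDiscMaxBound} follows.
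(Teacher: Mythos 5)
The pointwise inequality at the heart of your argument,
\begin{equation*}
\sM_E f\q(x\w) \lesssim \sM^\mu\q[\sM_{E'}\q|f\w|\w]\q(x\w),
\end{equation*}
is the step that fails, and the ``partial control'' claim you flag as the main obstacle is not a technical loose end but a genuine obstruction. With $t^{\mu c}$ frozen, the curve $t^\mu\mapsto\gamma_{2^{-j_E}(t^\mu,t^{\mu c})}(x)$ is controlled by the \emph{full} scaled list $(2^{-j_E}X,\sd)$, which contains vector fields $X_l$ with $d_l$ nonzero in several components. Those mixed-degree vector fields arise from commutators of pure-degree ones and are in general \emph{transversal} to the foliation generated by $X^\mu$. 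Consequently the partial curve does not stay on the $X^\mu$-leaf through $x$ (nor on any fixed leaf), and the maximal operator $\sM^\mu$, which averages only within such leaves, cannot dominate the corresponding averages, even after post-composing with $\sM_{E'}$. Already on the Heisenberg group with $\gamma_{s,u}(\xi)=e^{sX+uY}\xi$, the curve $s\mapsto e^{\delta_1 sX+\delta_2 uY}\xi$ deviates from $e^{\delta_2 uY}e^{\delta_1 sX}\xi$ by a $T$-shift of order $\delta_1\delta_2$, and $T$ does not appear in either single-parameter sublist; so $\sM^1\circ\sM^2$ does not control the strong maximal average pointwise. Separating pure from non-pure Taylor coefficients in \eqref{EqnDefnXjalpha}, as you propose, will not salvage this: it is precisely the non-pure coefficients that produce the transversal drift, and they cannot be absorbed as ``controlled remainders'' relative to $(X^\mu,d^\mu)$.

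This is exactly why the paper takes the longer route. After introducing the auxiliary operators $A_j^\mu$, it forms the alternating sum $B_j=\sum_{E\subseteq\nuset}(-1)^{|E|}A_{j_{E^c}}M_{j_E}$ (so that the inductive hypothesis \eqref{EqnInductHypWithA} handles every term with $E\subsetneq\nuset$), and then reduces Proposition~\ref{PropDiscMaxBound} to showing that $\sup_j|B_jf|$ is $L^p$-bounded. That is established not pointwise, but via the almost-orthogonality estimates of Theorem~\ref{ThmL2Thm}, the square function and reproducing formula of Section~\ref{SectionSquareFunc}, and the interpolation/bootstrapping argument of Lemma~\ref{LemmasPProp}. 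The paper also addresses your proposed approach directly in Section~\ref{SectionSingularIntegrals}: even for the special exponential $\gamma_t(x)=e^{t\cdot X}x$ one needs an $M$-fold iteration $\sMt\lesssim(\sMt_1\cdots\sMt_\nu)^M$ with $M$ large (not a single composition), and the authors remark that even this does not extend to the general $\gamma$ of Theorem~\ref{ThmMainMaxThm}. So the gap in your argument is real and cannot be fixed by sharpening the partial-control lemma; a structurally different mechanism (cancellation at the $L^2$ level, fed into a square function) is needed.
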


Indeed, to deduce Theorem \ref{ThmMainMaxThm} merely apply Proposition
\ref{PropDiscMaxBound} to $\q|f\w|$
and use \eqref{EqnsMBoundDisc}.
The difficulty in Proposition \ref{PropDiscMaxBound} is that,
unlike the operators $T_j$, the operators
$M_j$ do not have any cancellation
to take advantage of.  We now turn to reducing
Proposition \ref{PropDiscMaxBound} to an equivalent result
where there will be cancellation
to take advantage of.

We begin by explaining our induction.  Given $E\subseteq \nuset$,
separate $t\in \R^N$ into two variables $t=\q( t_1^E, t_2^E\w)$:
$t_2^E$ will be those coordinates that are mapped to $0$ under
$2^{-j_E}t$, and $t_1^E$ will be the rest of the coordinates.
I.e., $t_2^E$ are those coordinates $t_j$ such that $e_j^\mu\ne 0$
for some $\mu\in E^c$.
With an abuse of notation, we write $2^{-j_E} t_1^E$ as the $t_1^E$
coordinate of $2^{-j_E} t$, and so $2^{-j_E} t_1^E$ defines
$\q|E\w|$-parameter dilations on $t_1^E$.
Furthermore, with another abuse of notation, we write
$\sigma\q( t\w) = \sigma\q( t_1^E\w)\sigma\q( t_2^E\w)$,
where $\sigma\q( t_1^E\w)$ is a product of $\sigma_0\q( t_j\w)$
such that $t_j$ is a coordinate of $t_1^E$, and similarly
for $t_2^E$.
We may rewrite $M_{j_E}$ as follows,
\begin{equation*}
M_{j_E} f\q( x\w) = \q[\denum{\psi}{0}\q( x\w) \int f\q( \gamma_{2^{-j_E}t_1^E}\q( x\w) \w) \denum{\psi}{0}\q(\gamma_{2^{-j_E} t_1^E}\q( x\w)  \w) \sigma\q( t_1^E\w) \: dt_1^E\w] \q[\int \sigma\q(t_2^E \w)\: dt_2^E\w].
\end{equation*}

The term $\int \sigma\q(t_2^E \w)\: dt_2^E$ is a constant.  It is 
easy to see from our assumptions that
$\gamma_{2^{-j_E} t_1^E}$ is of the same form as $\gamma_{2^{-j}t}$
with $\nu$ replaced by $\q|E\w|$.  I.e., $\gamma_{t_1^E}$ satisfies
the hypotheses of Theorem \ref{ThmMainMaxThm} with $\nu$
replaced by $\q|E\w|$.
As a consequence, $M_{j_E}$ is a constant times an operator
of the same form as $M_j$, with $\nu$ replaced
by $\q|E\w|$.

We will prove Proposition \ref{PropDiscMaxBound} by induction
on $\nu$.  Due to the above discussion, our inductive hypothesis implies,
\begin{equation}\label{EqnInductHypo}
\LpN{p}{\sup_{j\in \N^{\nu}}\q|M_{j_E} f\w| }\lesssim \LpN{p}{f},
\end{equation}
for $E\subsetneq \nuset$ and $1<p\leq\infty$.
The base case of our induction will correspond to $E=\emptyset$.
In light of \eqref{EqnMemptyset}, the base case is trivial.

For each $\mu$, $1\leq \mu\leq \nu$, and each $j\in \Ninf$, define
the operator,
\begin{equation*}
A_j^\mu f\q( x\w) = \denum{\psi}{-1}\q( x\w) \int_{t\in\R^{q_\mu}} f\q(\gh_{2^{-j}t}^\mu\q( x\w) \w) \denum{\psi}{-1}\q(\gh_{2^{-j}t}^\mu\q( x\w) \w)\sigma\q( t\w) \: dt,
\end{equation*}
where we have used the dilations on $\R^{q_\mu}$ defined
in \eqref{EqnRqmuDil} and we have identified $2^{-\infty}=0$;
so that $A_\infty^\mu  = \q[\int \sigma\q( t\w)\: dt\w]\denum{\psi}{-1}^2$.
Here we have abused notation and viewed $\sigma$ as a function on
$\R^{q_{\mu}}$.  By this we mean,
$\sigma\q( t_1,\ldots, t_{q_\mu}\w)= \prod_{j=1}^{q_{\mu}} \sigma_0\q( t_j\w)$.
Define the maximal operator,
\begin{equation*}
\sM^\mu f\q( x\w) = \sup_{\delta\in \q[0,1\w]} \denum{\psi}{-3}\q( x\w) \int_{\q|t\w|\leq a} \q|f\q(\gh_{\delta t}^\mu\q( x\w)\w)\w| \denum{\psi}{-3}\q(\gh_{\delta t}^\mu\q( x\w) \w) \: dt.
\end{equation*}
Note that Theorem \ref{ThmSingleParamMax} shows that,
\begin{equation*}
\LpN{p}{\sM^\mu f}\lesssim \LpN{p}{f}, \quad 1<p\leq \infty.
\end{equation*}
Also it is elementary to verify the pointwise inequality
\begin{equation}\label{EqnSupAjBound}
\sup_{j\in \Ninf} \q|A_j^\mu f\q( x\w) \w|\lesssim \sM^\mu f\q( x\w),
\end{equation}
and so we have,
\begin{equation*}
\LpN{p}{\sup_{j\in \Ninf} \q|A_j^\mu f\q( x\w)\w|}\lesssim \LpN{p}{f}, \quad 1<p\leq \infty.
\end{equation*}

For $j=\q(j_1,\ldots, j_\nu \w)\in \Ninf^\nu$ define
\begin{equation}\label{EqnDefnA}
A_j = A_{j_1}^1 A_{j_2}^2\cdots A_{j_\nu}^\nu.
\end{equation}
Notice that 
\begin{equation*}
A_{\q(\infty, \infty,\cdots, \infty \w)} = \q[\int \sigma\q( t\w) \: dt\w]^{\nu} \denum{\psi}{-1}^{2\nu}.
\end{equation*}
And since $\denum{\psi}{-1} M_j= M_j=M_{j_{\nuset}}$, we see
to prove Proposition \ref{PropDiscMaxBound} it suffices to prove,
\begin{equation}\label{EqnMaxSTSWithA}
\LpN{p}{\sup_{j\in \N}\q|A_{j_\emptyset} M_{j_{\nuset}}f\w|}\lesssim \LpN{p}{f},\quad 1\leq p\leq \infty.
\end{equation}

For $E\subsetneq \nuset$, combining \eqref{EqnInductHypo} and \eqref{EqnSupAjBound}, we see,
\begin{equation}\label{EqnInductHypWithA}
\LpN{p}{\sup_{j\in \N} \q|A_{j_{E^c}} M_{j_{E}} f \w| }\lesssim \LpN{p}{f},\quad 1<p\leq \infty.
\end{equation}
For $j\in \N^\nu$, define the operator,
\begin{equation*}
B_j = \sum_{E\subseteq \nuset} \q( -1\w)^{\q|E\w|} A_{j_{E^c}} M_{j_E}.
\end{equation*}
From \eqref{EqnInductHypWithA} we see that to prove \eqref{EqnMaxSTSWithA}
(and hence to prove Proposition \ref{PropDiscMaxBound} and Theorem \ref{ThmMainMaxThm}) 
it suffices
to prove,
\begin{prop}
\begin{equation*}
\LpN{p}{\sup_{j\in \N^\nu} \q|B_j f\w|}\lesssim \LpN{p}{f},
\end{equation*}
for $1<p\leq \infty$.
\end{prop}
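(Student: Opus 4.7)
The plan is to deduce the proposition from an $L^p$-decay estimate for each individual $B_j$ that is summable over $j\in\N^\nu$, thereby dominating the supremum by a convergent sum. A trivial uniform bound is immediate: each of the $2^\nu$ summands $A_{j_{E^c}}M_{j_E}$ making up $B_j$ is a composition of averaging operators whose Schwartz kernels have uniformly bounded $L^1$ mass in either variable, so $\LpOpN{p}{B_j}\leq C$ for $1\leq p\leq\infty$, uniformly in $j$. In particular $\sup_j \q|B_j f\q(x\w)\w|\leq C\LpN{\infty}{f}$ pointwise, which settles the case $p=\infty$ of the proposition.

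The heart of the argument is an $L^2$ decay estimate of the form
\[
\LpOpN{2}{B_j}\leq C\,2^{-\epsilon\q(j_1+\cdots+j_\nu\w)},\qquad j\in\N^\nu,
\]
for some $\epsilon>0$. The cancellation responsible for this decay is built into the very definition of $B_j$ as a $\nu$-fold inclusion--exclusion: were the multi-parameter averages $M_{j_E}$ to factor exactly as products of single-parameter averages along the commuting flows $\gh^\mu$ in such a way that these single-parameter factors coincided with the $A_{j_\mu}^\mu$, then $B_j$ would be the identically zero product of differences in each coordinate direction. The actual obstruction to such a factorization is precisely the non-pure-power Taylor coefficients $X_\alpha$ of the vector field $W$ defined in Section \ref{SectionCurves}; by the hypotheses of that section, these non-pure-power coefficients lie in the multi-parameter distribution $\sD_\delta$ generated (at the natural scales) by the pure-power coefficients, and can therefore be controlled by the multi-parameter $L^2$ theory from \cite{StreetMultiParameterSingRadonLt} that was advertised in the introduction as one of the main technical lemmas of this paper.

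To conclude, Riesz--Thorin interpolation between the $L^2$ decay above and the uniform bounds $\LpOpN{1}{B_j},\LpOpN{\infty}{B_j}\leq C$ yields, for each $p\in\q(1,\infty\w)$, an exponent $\epsilon_p>0$ with $\LpOpN{p}{B_j}\leq C_p\,2^{-\epsilon_p\q(j_1+\cdots+j_\nu\w)}$. Minkowski's inequality then gives
\[
\LpN{p}{\sup_{j\in\N^\nu}\q|B_j f\w|}\leq \sum_{j\in\N^\nu}\LpN{p}{B_j f}\lesssim \LpN{p}{f}\sum_{j\in\N^\nu}2^{-\epsilon_p\q(j_1+\cdots+j_\nu\w)}\lesssim \LpN{p}{f}.
\]
The main obstacle is the $L^2$ decay itself: quantitatively converting the inclusion--exclusion cancellation into operator-norm gain requires a careful decomposition of $B_j$ that isolates, in each coordinate direction $\mu$, a genuine difference operator whose oscillation can be estimated using the $\sD_\delta$-subordination of the non-pure-power Taylor coefficients of $W$, combined with the multi-parameter $L^2$ machinery from the first paper of the series.
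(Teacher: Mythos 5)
Your argument hinges on the claimed absolute decay estimate
\begin{equation*}
\LpOpN{2}{B_j}\lesssim 2^{-\epsilon\q(j_1+\cdots+j_\nu\w)},
\end{equation*}
and this is the gap: such an estimate is not true in general, and it is not what the paper proves. The operators $A_{j_{E^c}}M_{j_E}$ entering the inclusion--exclusion sum are all averaging operators with $\LpOpN{2}{A_{j_{E^c}}M_{j_E}}\approx 1$ uniformly in $j$, and there is no mechanism that makes the alternating sum itself small in \emph{operator norm} as $\q|j\w|\to\infty$. The inclusion--exclusion produces a genuine difference in each coordinate direction, but the magnitude of that difference, applied to a test function oscillating at exactly the scale $2^{-j}$, stays of size $O\q(1\w)$. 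Concretely, already when $\nu=1$, $B_j$ is (up to a constant and cutoffs) $A_j-M_j$: the difference between a ``thick'' average along the exponentiated flow $\gh^\mu$ and a surface average along $\gamma_{2^{-j}t}$. For $f$ concentrated at scale $2^{-j}$ the two averages both have size $\approx 1$ but are not close, so $\LpOpN{2}{B_j}$ does not decay. What \emph{is} true, and what the paper proves as Theorem \ref{ThmL2Thm}, is the \emph{relative} almost-orthogonality estimate
\begin{equation*}
\LpOpN{2}{B_j D_k}\lesssim 2^{-\epsilon_2 \diam{j,k}},
\end{equation*}
which gives no gain when $k=j$ and therefore cannot be summed in $j$ by Minkowski.

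Because the correct decay is only relative to a Littlewood--Paley localization $D_k$, the proof cannot proceed by $\sup_j\q|B_j f\w|\leq\sum_j\q|B_j f\w|$ and termwise estimation. The paper instead introduces the vector-valued operators $\sB_k\q\{f_j\w\}=\q\{B_j D_{j+k}f_j\w\}$, uses the reproducing formula $\denum{\psi}{-2}=\denum{\psi}{-2}U_M V_M$ and the square function of Section \ref{SectionSquareFunc} to reduce $\sup_j\q|B_j f\w|$ to $\LplqOpN{p}{2}{\sB_k}$ with decay in $\q|k\w|$, and then establishes this decay for $1<p\leq 2$ by a bootstrapping interpolation (Lemma \ref{LemmasPProp}): the $L^2\q(\ell^2\w)$ bound comes from Theorem \ref{ThmL2Thm}, the $L^p\q(\ell^p\w)$ bound from the trivial kernel estimate, and the $L^q\q(\ell^\infty\w)$ bound requires the $L^q$ boundedness of $\sM_{\nuset}$ itself and of the lower-dimensional maximal operators $\sM_E$, hence the inductive/bootstrapping structure. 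The case $p=\infty$ is trivial as you observe, and $2<p<\infty$ follows by interpolation with $p=2$. Your final Minkowski summation is sound \emph{given} absolute decay, but absent that, the whole Littlewood--Paley and bootstrapping machinery is essential, not an inessential refinement.
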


\section{Preliminary $L^2$ results}
In this section we describe some $L^2$ results concerning
the operators defined in Section \ref{SectionAuxOps}.
These results, along with the results in Section \ref{SectionCZOps},
make up the main technical results on which our theory is based.
All of the results in this section will follow
from the results
in \cite{StreetMultiParameterSingRadonLt} (after some reductions).

\begin{thm}\label{ThmL2Thm}
For $j_1,\ldots, j_r\in \N^\nu$, define,
\begin{equation*}
\diam{j_1,\ldots, j_r} = \max_{1\leq l,m\leq r} \q|j_l-j_m\w|.
\end{equation*}
If we take $a>0$ sufficiently small,\footnote{Recall, all
of the operators in Section \ref{SectionAuxOps} were defined in terms
of some small $a>0$.}
 then there exists $\epsilon_2>0$ such that,
\begin{itemize}
\item $\LpOpN{2}{B_{j_1}D_{j_2}}\lesssim 2^{-\epsilon_2\diam{j_1,j_2}}$,
\item $\LpOpN{2}{D_{j_1}T_{j_2}D_{j_3}}\lesssim 2^{-\epsilon_2 \diam{j_1,j_2,j_3}}$,
\item $\LpOpN{2}{D_{j_1}^{*}D_{j_2}^{*}D_{j_3}D_{j_4}}\lesssim 2^{-\epsilon_2\diam{j_1,j_2,j_3,j_4}}$,
\item $\LpOpN{2}{D_{j_1}D_{j_2}D_{j_3}^{*}D_{j_4}^{*}}\lesssim 2^{-\epsilon_2\diam{j_1,j_2,j_3,j_4}}$.
\end{itemize}
Here, $j_1,j_2,j_3$, and $j_4$ are arbitrary elements of $\N^\nu$.
\end{thm}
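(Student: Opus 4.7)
The plan is to deduce all four estimates from the multi-parameter $L^2$ almost-orthogonality machinery for singular Radon transforms developed in \cite{StreetMultiParameterSingRadonLt}, applied to the compositions of $T_j$, $D_j$, $D_j^*$, and $B_j$ constructed in Section \ref{SectionAuxOps}. The unifying observation is that each of these operators can be rewritten as an operator associated to $\gamma$ via a kernel of the form $\sum_{j} \dil{\vsigt_j}{2^j}$ where $\vsigt_j$ has vanishing $t_\mu$-integral in every parameter direction $\mu$ active at scale $j$. Once this is done, the $L^2$ theory of \cite{StreetMultiParameterSingRadonLt} yields the gain $2^{-\epsilon_2 \diam{\cdot}}$ through a Cotlar--Stein-type argument in which cancellation at the smaller scale is paired against smoothness (measured by the rescaled vector fields of Section \ref{SectionCCGeomII}) at the larger scale.

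The first step is a cancellation inventory for each factor. For $T_{j_2}$, Definition \ref{DefnsK} gives $\int \vsig_{j_2}\q(t\w)\,dt_\mu=0$ whenever $(j_2)_\mu>0$ (the exceptional clause is vacuous in the case $\mu_0=\nu$ we are treating). For $D_j = D_{j_1}^1\cdots D_{j_\nu}^\nu$, the factor $D_{j_\mu}^\mu$ is built from $\phi_{\mu,j_\mu}$, which satisfies $\int \phi_{\mu,j_\mu}=0$ whenever $j_\mu>0$; duality transfers the same cancellation to $D_j^*$. For $B_j = \sum_{E\subseteq \nuset} \q(-1\w)^{\q|E\w|} A_{j_{E^c}} M_{j_E}$, the inclusion--exclusion structure is engineered so that $B_j$ carries cancellation in every parameter direction simultaneously: pairing the summands indexed by $E$ with those indexed by $E\cup\q\{\mu\w\}$, and using that $A_\infty^\mu$ is multiplication by the constant $\int \sigma$ times a smooth cutoff while the coordinates $\mu\in E^c$ of $M_{j_E}$ are set to zero, one writes the kernel of $B_j$ as a sum of pieces $\dil{\vsigt_j}{2^j}$ with vanishing $t_\mu$-integral for every $\mu$.

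With the cancellation properties in hand, each of the four compositions in the theorem is of precisely the form to which the multi-parameter $L^2$ almost-orthogonality theorem of \cite{StreetMultiParameterSingRadonLt} applies, yielding the $2^{-\epsilon_2 \diam{\cdot}}$ bound. The regime of small diameter (say $\leq 1$) is handled trivially by noting that each individual factor is uniformly bounded on $L^2$; for $T_j$ and $D_j$ this is standard, while for $B_j$ the uniform $L^2$ bound comes from the $L^2$ boundedness of each $A_j^\mu$ together with the inductive hypothesis on $\nu$ for the maximal operators $M_{j_E}$ from Section \ref{SectionAuxOps}.

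The main obstacle is the cancellation claim for $B_j$. Unlike $T_j$ and $D_j$, the operators $M_{j_E}$ do not factor as tensor products of single-parameter operators along the parameter decomposition (their dependence on $\gamma_{2^{-j_E} t_1^E}$ couples the surviving coordinates), so the vanishing $t_\mu$-moments of the kernel cannot be read off factor by factor. Extracting the cancellation therefore requires a careful pairing within the inclusion--exclusion sum, together with an appeal to the equivalence of various cancellation formulations from \cite{StreetMultiParameterSingRadonLt}. Once this is in place, the four estimates follow directly from the $L^2$ machinery already established there.
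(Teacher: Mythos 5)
The overall shape of your argument — identify where the cancellation lives in each factor and then invoke the $L^2$ machinery of \cite{StreetMultiParameterSingRadonLt} — matches the paper's strategy, but your claim about how that cancellation is packaged is not correct, and the steps that actually make the machinery applicable are missing.

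First, the unifying claim that each of $T_j$, $D_j$, $D_j^{*}$, $B_j$ can be rewritten as a single operator ``associated to $\gamma$'' with kernel $\dil{\vsigt_j}{2^j}$ having vanishing $t_\mu$-moments fails already for $D_j$. By construction $D_j = D_{j_1}^1\cdots D_{j_\nu}^\nu$ is a \emph{composition} of operators attached to the different exponential maps $\gh^\mu$ (not to $\gamma$), and $B_j$ mixes those with the $M_{j_E}$, which are attached to $\gamma$ itself. The Schwartz kernel of such a composition is not of the form $\dil{\vsigt_j}{2^j}$ for any reasonable $\vsigt_j$. More importantly, the inclusion--exclusion structure of $B_j$ does \emph{not} produce a vanishing-moment kernel. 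Pairing $E$ with $E\cup\{\mu_1\}$ yields the operator difference $A_{j_{\q(E\cup\{\mu_1\}\w)^c}}M_{j_{E\cup\{\mu_1\}}} - A_{j_{E^c}}M_{j_E}$, and the two summands are built from genuinely different underlying maps; there is no single kernel whose $t_{\mu_1}$-integral vanishes. What the paper does instead is realize each such difference as $R_1-R_2 = R^{\zeta}-R^{0}$ in the sense of Theorem \ref{ThmGenL2Thm}, by constructing an auxiliary map $\gt_{t,s}$ interpolating between the two summands and identifying the small parameter $\zeta = 2^{-c_0\iinf}$. This is not an ``equivalent cancellation formulation''; it is a different mechanism, and it only produces cancellation in the \emph{one} direction $\mu_1$ where $\iinf = \q|j_1-j_2\w|_\infty$ is realized, not in every direction simultaneously.

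Second, Theorem \ref{ThmGenL2Thm} only yields a gain when the cancellation factor $(R_1-R_2)$ sits at the \emph{end} of the product $S_1\cdots S_L(R_1-R_2)$, and when all factors are controlled at unit scale by a common list $(Z,\dt)=(2^{-\jz}X,\sd)$ with $\jz = j_1\wedge j_2$ (or the coordinatewise minimum over all indices). Getting the cancellation to the end from either $B_{j_1}$ (which sits at the front) or from $D_{k_{\mu_1}}^{\mu_1}$ (which sits inside a product) requires the $T^{*}T$-iteration steps in the paper's proof — passing to $(T^{*}T)^2$ and peeling off bounded factors to isolate the right piece. Your proposal skips this entirely, and with it the case analysis on whether $\iinf = k_{\mu_1}-j_{\mu_1}$ (cancellation from the $D$-factor) or $\iinf = j_{\mu_1}-k_{\mu_1}$ (cancellation from the $B$-factor, with the further split into the $A$-difference and the $M$-difference). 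Without these reductions there is no form to which the cited $L^2$ theorem applies, so ``the four estimates follow directly'' is not justified as stated.
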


The rest of this section is devoted to the proof of Theorem \ref{ThmL2Thm}.
We will see that each part of Theorem \ref{ThmL2Thm} follows from
an application of the same general result.  This result
is proved in \cite{StreetMultiParameterSingRadonLt},
and we review the statement of the result in
in Section \ref{SectionGenL2}.  In Section
\ref{SectionL2Reduce} we show how to reduce each part of
Theorem \ref{ThmL2Thm} to the result in Section \ref{SectionGenL2}.

\begin{rmk}
Using methods similar to the ones in this section, one
can prove $\LpOpN{2}{T_{j_1}^{*}T_{j_2}}, \LpOpN{2}{T_{j_1}T_{j_2}^{*}}\lesssim 2^{-\epsilon \diam{j_1,j_2}}$.
This shows, via the Cotlar-Stein lemma, that $T$ is bounded on $L^2$.
This is the proof used in \cite{StreetMultiParameterSingRadonLt}.
\end{rmk}

	\subsection{A general $L^2$ result}\label{SectionGenL2}
In this section, we review the main technical result
from \cite{StreetMultiParameterSingRadonLt}; this result
will imply Theorem \ref{ThmL2Thm}.

The setting is as follows.  We are given operators
$S_1,\ldots, S_L$, and $R_1$, $R_2$, and a real number
$\zeta\in \q[0,1\w]$.  We will present conditions
on these operators such that there
exists $\epsilon>0$ with,
\begin{equation}\label{EqnToShowGenL2}
\LpOpN{2}{S_1\cdots S_L \q( R_1-R_2\w)}\lesssim \zeta^\epsilon.
\end{equation}
In Section \ref{SectionL2Reduce}, we will show that
the assumptions of this section hold uniformly
in $j_1,j_2,j_3,j_4$ for the operators
in Theorem \ref{ThmL2Thm} (with an appropriate choice
of $\zeta$), and Theorem \ref{ThmL2Thm} will follow.
The term $R_1-R_2$ is how we make use of the cancellation
implicit in the operators in Theorem \ref{ThmL2Thm}.

To describe the operators above, suppose we are given
$C^\infty$ vector fields $Z_1,\ldots, Z_q$ on $\Omega$
with {\it single}-parameter formal degrees
$\dt_1,\ldots, \dt_q$.  We will be taking
$\q( Z,\dt\w) = \q( \delta X, \sd\w)$ for some
$\delta\in \q[0,1\w]^\nu$, to prove Theorem \ref{ThmL2Thm}.
We assume that $\q( Z,\dt\w)$ satisfies the
assumptions of Theorem \ref{ThmMainCCThm} uniformly
for $x_0\in \K$, for some fixed $\xi>0$.

We assume that $r$ of the vector fields $Z_1,\ldots, Z_r$
generate $Z_1,\ldots, Z_q$ in the sense that there is an\footnote{The implicit
constants in \eqref{EqnToShowGenL2} may depend on
$M_1$ and $\xi$.}
 $M_1$
such that for every $j$, $r+1\leq j\leq q$,
$Z_j$ may be written in the form,
\begin{equation*}
Z_j = \ad{Z_{l_1}} 
\ad{Z_{l_2}} \cdots \ad{Z_{l_m}} Z_{l_{m+1}}, \quad 1\leq l_k\leq r, \quad 0\leq m\leq M_1-1.
\end{equation*}

\begin{defn}
Let $\gh:\Q^N\q( \rho\w)\times \Omega''\rightarrow \Omega$ be a
$C^\infty$ function, satisfying $\gh_0\q( x\w) \equiv x$.  We say
that $\gh$ is controlled by $\q( Z,\dt\w)$ at the unit scale
if the following holds.
Define the vector field $\Wh\q( t,x\w)$ by,
\begin{equation*}
\Wh\q( t,x\w) = \frac{d}{d\epsilon}\bigg|_{\epsilon=1} \gh_{\epsilon t}\circ \gh_{t}^{-1}\q( x\w).
\end{equation*}
We suppose, there exist $\rho_1,\tau_1>0$ such that for every $x_0\in \K$,
\begin{itemize}
\item $\Wh\q( t,x\w) = \sum_{l=1}^q c_l\q( t,x\w) Z_l\q( x\w)$, on $\B{Z}{\dt}{x_0}{\tau_1}$,
\item $\sum_{\q|\alpha\w|+\q|\beta\w|\leq m} \CjN{Z^\alpha \partial_t^\beta c_l}{0}{\Q^N\q(\rho_1\w)\times \B{Z}{\dt}{x_0}{\tau_1}}<\infty$, for every $m$.
\end{itemize}
\end{defn}

\begin{rmk}
Note that the assumption that $\q( X,d\w)$ controls $\gamma$ can
be restated as $\q( \delta X, \sd\w)$ controls $\gamma_{\delta t}$
at the unit scale
for every $\delta\in \q[0,1\w]^\nu$, uniformly in $\delta$.
\end{rmk}

We now turn to defining the operators $S_j$.  We assume, for each $j$,
we are given a $C^\infty$ function $\gh_j:\Q^{N_j}\q(\rho\w)\times \Omega''\rightarrow \Omega$ with $\gh_j\q(0,x\w)\equiv x$, and that this
function is controlled by $\q( Z,\dt\w)$ at the unit scale.
As usual, we restrict our attention to $\rho>0$ small,
so that $\gh_{j,t}^{-1}$ makes sense wherever we use it.
We suppose we are given $\psi_{j,1},\psi_{j,2}\in C_0^\infty\q( \R^n\w)$
supported on the interior of $\K$ and
$\kappa_j\in C^\infty\q(\overline{\Q^{N_j}\q( a\w)}\times \overline{\Omega' } \w)$.
Finally, we suppose we are given $\vsig_j\in C_0^\infty\q( \Q^{N_j}\q( a\w)\w)$.
We define,
\begin{equation*}
S_j f\q( x\w) = \psi_{j,1}\q( x\w) \int f\q( \gh_{j,t}\q( x\w)\w) \psi_{j,2}\q(\gh_{j,t}\q( x\w)\w) \kappa_j\q( t,x\w) \vsig_j\q( t\w)\: dt.
\end{equation*}

\begin{defn}
If $S_j$ is of the above form, we say $S_j$ is controlled by $\q( Z,\dt\w)$
at the unit scale.
\end{defn}

\begin{rmk}\label{RmkAdjointControl}
If $S_j$ is controlled by $\q( Z,\dt\w)$ at the unit scale, then so
is $S_j^{*}$.  This is shown in
\cite{StreetMultiParameterSingRadonLt}.
Furthermore, a simple change of variables shows that if $S_j$
is controlled at the unit scale by $\q( Z,\dt\w)$, then
$\LpOpN{2}{S_j}\lesssim 1$.
\end{rmk}

We assume further, that for each $l$, $1\leq l\leq r$, there is a $j$,
$1\leq j\leq L$, and a multi index $\alpha$ (with $\q|\alpha\w|\leq M_2$
for some\footnote{The implicit constants
in \eqref{EqnToShowGenL2} are allowed to depend on $M_2$.}
$M_2$), such that,
\begin{equation*}
Z_l\q( x\w) = \frac{1}{\alpha!} \frac{\partial}{\partial t}^\alpha\bigg|_{t=0} \frac{d}{d\epsilon}\bigg|_{\epsilon=1} \gh_{j,\epsilon t}\circ \gh_{j,t}^{-1}\q( x\w).
\end{equation*}
This concludes our assumptions on $S_1,\ldots, S_L$.

We now turn to the operators $R_1$ and $R_2$.  It is here
where $\zeta$ plays a role.
We assume we are given a $C^\infty$ function $\gt_{t,s}$ 
which is controlled by $\q( Z,\dt\w)$ at the unit scale:
\begin{equation*}
\gt_{t,s}\q( x\w) : \Q^{\Nt}\q( \rho\w)\times \q[-1,1\w]\times \Omega''\rightarrow \Omega, \quad \gt_{0,0}\q( x\w) \equiv x.
\end{equation*}

\begin{rmk}
Here we are thinking of $\q( t,s\w)$ as playing the role of the
$t$ variable in the definition of control.
\end{rmk}

We suppose we are given $\kapt\q( t,s,x\w) \in C^{\infty}\q(\overline{\Q^{\Nt}\q( a\w)} \times \q[-1,1\w]\times \Omega'' \w)$,
$\vsigt\in L^1\q( \Q^N\q( a\w)\w)$, and $\psit_1,\psit_2\in C^\infty_0\q( \R^n\w)$ supported on the interior of $\K$.  We define, for $\xi\in \q[-1,1\w]$,
\begin{equation*}
R^{\xi} f\q(x\w) = \psit_1\q( x\w) \int f\q( \gt_{t,\xi}\q( x\w)\w) \psit_2\q( \gt_{t,\xi}\q( x\w)\w) \kapt\q( t,\xi,x\w) \vsigt\q( t\w)\: dt.
\end{equation*}
We set $R_1=R^{\zeta}$ and $R_2=R^{0}$.

\begin{thm}[Theorem 14.5 of \cite{StreetMultiParameterSingRadonLt}]\label{ThmGenL2Thm}
In the above setup, if $a>0$ is sufficiently small, we have,
\begin{equation*}
\LpN{2}{S_1\cdots S_L \q( R_1-R_2\w)}\lesssim \zeta^{\epsilon},
\end{equation*}
for some $\epsilon>0$.
\end{thm}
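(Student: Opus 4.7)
The plan is to proceed by the fundamental theorem of calculus in the parameter $\xi$. Writing
\[
R^\zeta - R^0 = \int_0^\zeta \partial_\xi R^\xi \, d\xi,
\]
the task reduces to establishing a uniform bound $\LpOpN{2}{S_1 \cdots S_L \partial_\xi R^\xi} \lesssim 1$ for $\xi \in [-1,1]$; combined with the trivial uniform bound $\LpOpN{2}{S_1 \cdots S_L R^\xi} \lesssim 1$ from Remark \ref{RmkAdjointControl}, this is already more than enough to furnish the stated $\zeta^\epsilon$ decay (indeed with $\epsilon = 1$).

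Differentiating $R^\xi f$ in $\xi$ forces a derivative onto $f$ via the chain rule. Since $\gt_{t,\xi}$ is controlled by $(Z,\dt)$ at the unit scale, the vector field $Y^\xi_t := \partial_\xi \gt_{t,\xi} \circ \gt_{t,\xi}^{-1}$ admits a smooth expansion $Y^\xi_t(x) = \sum_{l=1}^q b_l(t,\xi,x) Z_l(x)$ with the $b_l$ uniformly controlled in every $C^m$ norm, and $\partial_\xi (f \circ \gt_{t,\xi})(x) = (Y^\xi_t f)(\gt_{t,\xi}(x))$. Substituting this expansion, and collecting the $\xi$-derivatives of the remaining smooth factors ($\psit_2 \circ \gt$, $\kapt$), yields a decomposition
\[
\partial_\xi R^\xi = \sum_{l=1}^q Q^\xi_l \circ Z_l + E^\xi,
\]
in which each $Q^\xi_l$ is again an operator of the type defining $R^\xi$, and $E^\xi$ is a lower-order error of the same type; both are controlled at the unit scale and hence uniformly $L^2$-bounded by Remark \ref{RmkAdjointControl}.

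The main obstacle is then the unbounded factor $Z_l$ acting directly on $f$. The resolution is to commute $Z_l$ leftward through the chain $S_1 \cdots S_L$ and absorb it into some $S_j$. For a generator index $1 \leq l \leq r$, the hypothesis supplies some $j$ and a multi-index $\alpha$ (with $|\alpha| \leq M_2$) such that
\[
Z_l = \frac{1}{\alpha!} \partial_t^\alpha\Big|_{t=0} \frac{d}{d\epsilon}\Big|_{\epsilon=1} \gh_{j,\epsilon t} \circ \gh_{j,t}^{-1}.
\]
Integrating by parts in the $t$-variable inside the integral defining $S_j$, this identity lets one rewrite the composition $S_j \circ Z_l$ as a new operator $S_j'$ of the same controlled type, in which $\vsig_j$ and $\kappa_j$ have been differentiated in $t$. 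For a non-generator index $l > r$, the commutator representation $Z_l = \ad{Z_{l_1}} \cdots \ad{Z_{l_m}} Z_{l_{m+1}}$ (with $m \leq M_1 - 1$) reduces matters to the generator case by iterated application, the intermediate commutators also being controlled at the unit scale.

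The hardest part is the combinatorial bookkeeping that sustains this reduction across many passes: each invocation of the identity above moves a $Z_l$ across other $S_i$'s and rewrites the product as a finite sum of new compositions, and one must verify that every such composition remains a finite product of operators controlled by $(Z,\dt)$ at the unit scale (with a uniformly bounded total derivative count). Once this invariant is maintained, each resulting factor is uniformly $L^2$-bounded with constants depending only on $M_1$, $M_2$, $\xi$, and the $C^\infty$ data of the problem, yielding $\LpOpN{2}{S_1 \cdots S_L \partial_\xi R^\xi} \lesssim 1$ and completing the proof.
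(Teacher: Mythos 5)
The theorem you are asked to prove is quoted verbatim from \cite{StreetMultiParameterSingRadonLt} (Theorem 14.5); the present paper supplies no proof but simply refers to that reference, so there is no in-paper argument to compare against. Your sketch must therefore be assessed on its own merits.

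The high-level plan (FTC in $\xi$, decompose $\partial_\xi R^\xi$ into controlled operators composed with vector fields $Z_l$, then absorb the $Z_l$'s via integration by parts using the generator hypothesis) is a natural and plausible strategy, and the way the hypotheses involving $M_1$ and $M_2$ bound commutator depth and Taylor order strongly suggests a bounded induction of roughly this shape. However, the sketch leaves the two load-bearing steps unargued. First, the assertion that $Y^\xi_t := \partial_\xi \gt_{t,\xi}\circ\gt_{t,\xi}^{-1}$ expands as $\sum_l b_l(t,\xi,x)Z_l(x)$ with controlled coefficients does not follow directly from the definition of control: that hypothesis constrains $\Wh = \frac{d}{d\epsilon}\big|_{\epsilon=1}\gt_{\epsilon t,\epsilon\xi}\circ\gt_{t,\xi}^{-1} = \sum_j t_j\,\partial_{t_j}\gt\circ\gt^{-1} + \xi\,\partial_\xi\gt\circ\gt^{-1}$, and extracting the pure $\partial_\xi$ piece from the controlled expansion of $\Wh$ requires a nontrivial lemma in the spirit of Christ--Nagel--Stein--Wainger's analysis; it is not automatic. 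Second, and more seriously, your entire argument rests on commuting $Z_l$ leftward through $Q_l^\xi$ and all $S_i$ with $i>j$ to bring it adjacent to the $S_j$ supplied by the generator hypothesis, yet the sketch offers no actual argument that this is possible. The commutator $[Z_l, S_i]$ involves the pushforward of $Z_l$ along the diffeomorphisms $\gh_{i,t}$ at variable base points; one must show via control that this pushforward re-expands in the $Z_m$'s with controlled coefficients, that the new operators so produced are still controlled at the unit scale, and that the resulting tree of terms closes up after boundedly many passes, with every surviving $Z$ ultimately landing at an $S_j$ where it can be killed. That verification is where the entire content of the theorem lives, and labeling it ``combinatorial bookkeeping'' that one maintains as an ``invariant'' is not a proof. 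Finally, the claimed $\epsilon=1$ would of course follow if the uniform bound on $S_1\cdots S_L\,\partial_\xi R^\xi$ were established without loss, but you should treat that as a consequence to be proved rather than an output of the plan; the theorem's stated ``some $\epsilon>0$'' leaves open whether the cited argument in fact proceeds this cleanly.
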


\begin{rmk}\label{RmkUnifAssump}
It is important in our applications of Theorem \ref{ThmGenL2Thm}
that the various constants can be chosen independent
of any relevant parameters.
I.e., that if all of the hypotheses of this section
hold ``uniformly'' then so does Theorem \ref{ThmGenL2Thm}.
Indeed, this is the case, and is discussed further
and made precise in \cite{StreetMultiParameterSingRadonLt}.  
In this paper, we merely say that
in our proof of Theorem \ref{ThmL2Thm}, all of our applications
of Theorem \ref{ThmGenL2Thm} will satisfy the hypotheses of
this section uniformly in the appropriate sense, and we
leave the straight-forward verification of this fact to the reader.
\end{rmk}

	\subsection{Reduction to the general $L^2$ result}\label{SectionL2Reduce}
This section is devoted
to proving Theorem \ref{ThmL2Thm} by applying
Theorem \ref{ThmGenL2Thm}.
We will be implicitly choosing $a>0$ by choosing it small
enough that Theorem \ref{ThmGenL2Thm} applies.
Since the assumptions of Theorem \ref{ThmGenL2Thm} will hold
uniformly in $j_1,j_2,j_3,j_4$, we will have that $a>0$, $\epsilon>0$, and
the implicit constant in Theorem \ref{ThmGenL2Thm} can all
be chosen independent of $j_1,j_2,j_3,j_4\in \N^\nu$.
See Remark \ref{RmkUnifAssump} and \cite{StreetMultiParameterSingRadonLt} for more details on this.

Recall the list of vector fields
$\q( X,d\w) = \q( X_1,d_1\w),\ldots, \q( X_q,d_q\w)$
satisfying $\sD\q( \K, \q[0,1\w]^\nu\w)$
defined in Section \ref{SectionCurves}.

Our assumptions on $\gamma$ can be restated (by
possibly reordering $\q( X_1,d_1\w),\ldots, \q( X_q,d_q\w)$)
as there exists $r\leq q$ such that,
\begin{enumerate}
\item $\q( X,d\w)$ controls $\gamma$.
\item For $1\leq l\leq r$, $d_l$ is nonzero in only one component.
\item Every $\q( X_j,d_j\w)$, with $r<j\leq q$, can be written as,
\begin{equation*}
X_j = \ad{X_{l_1}}\ad{X_{l_2}}\cdots \ad{X_{l_m}}X_{l_{m+1}},
\end{equation*}
\begin{equation*}
d_j= d_{l_1}+d_{l_2}+\cdots+d_{l_{m+1}},
\end{equation*}
with $1\leq l_k\leq r$.

\item Every $\q( X_l, d_l\w)$, $1\leq l\leq r$, is of the form,
\begin{equation}\label{EqnHowTheXjsCome}
X_l = \frac{1}{\alpha!}\frac{\partial}{\partial t}^{\alpha}\bigg|_{t=0} \frac{d}{d\epsilon}\bigg|_{\epsilon=1} \gamma_{\epsilon t}\circ \gamma_t^{-1}\q( x\w),
\end{equation}
\begin{equation}\label{EqnHowThedjsCome}
d_l = \deg\q( \alpha\w).
\end{equation}
\end{enumerate}

We now describe how the above assumptions come into play in what follows.
Take $\delta\in \q[0,1\w]$.  Define the list of vector
fields with single-parameter formal degree $\q( Z,\dt\w)=\q( \delta X,\sd\w)$.
Note that $Z_1,\ldots, Z_r$ generate $Z_1,\ldots, Z_q$, in the
sense of that every $Z_j$ ($r+1\leq j\leq q$) can be written in the form,
\begin{equation*}
Z_j = \ad{Z_{l_1}}\ad{Z_{l_2}}\cdots \ad{Z_{l_m}}Z_{l_{m+1}},
\end{equation*}
with $1\leq l_k\leq r$ for every $k$.

Fix $\delta_1=\q( \delta_1^1,\ldots, \delta_{1}^\nu\w), \delta_2=\q( \delta_2^1,\ldots, \delta_2^\nu\w)\in \q[0,1\w]^\nu$ and assume
$\delta_1^\mu\leq \delta_2^\mu$ for every $\mu$.
Then, using the fact that $\q( X,d\w)$ controls $\gamma$, 
we have $\q( \delta_2 X, \sd\w)$ controls $\gamma_{\delta_1 t}$
at the unit scale, uniformly in $\delta_1,\delta_2$.
Furthermore, suppose that $\delta_1^\mu=\delta_2^\mu$ for some
fixed $\mu$.  Suppose further that for $j_0$ fixed ($j_0\leq r$),
$d_{j_0}$ is nonzero in only the $\mu$th coordinate.
Define $\gt_t = \gamma_{\delta_1 t}$.  We then have,
\begin{equation*}
\delta_2^{d_{j_0}} X_{j_0} = \delta_1^{d_{j_0}} X_{j_0} 
=\frac{1}{\alpha!}\frac{\partial}{\partial t}^{\alpha}\bigg|_{t=0} \frac{d}{d\epsilon}\bigg|_{\epsilon=1} \gt_{\epsilon t}\circ \gt_t^{-1}\q( x\w),
\end{equation*}
for some $\alpha$.

We now turn to the proof of Theorem \ref{ThmL2Thm}.  We describe, in detail,
the proof for $B_j D_k$ as it is the most complicated (here we have
replaced $j_1$ with $j$ and $j_2$ with $k$ for notational convenience).
We then indicate the modifications necessary to study all of the other operators.
Let $\jz=j\wedge k$.
Define $\q( Z,d\w) = \q( 2^{-\jz} X,\sd\w)$, and
$\iinf = \q|j-k\w|_\infty$.  Our goal is to show that
there exists $\epsilon>0$ such that,
\begin{equation}\label{EqnToShowBjDk}
\LpOpN{2}{B_j D_k}\lesssim 2^{-\epsilon \iinf}.
\end{equation}
Note that this is trivial if $\iinf=0$ and so we assume $\iinf>0$
in what follows.

We separate the proof into two cases.  The first case
is when there is a $\mu_1$ ($1\leq \mu_1\leq \nu$)
such that $\iinf = k_{\mu_1}-j_{\mu_1}$.
In this case, we need only use the cancellation in the operator $D_k$,
and so it suffices to prove,
\begin{equation}\label{EqnToShowL21}
\LpOpN{2}{A_{j_{E^c}} M_{j_{E}} D_{k}}\lesssim 2^{-\epsilon \iinf},
\end{equation}
for every $E\subseteq \nuset$.

To prove \eqref{EqnToShowL21}, it suffices to prove,
\begin{equation}\label{EqnToShowL22}
\LpOpN{2}{ \q[ D_{k}^{*} M_{j_E}^{*} A_{j_{E^c}}^{*} A_{j_{E^c}} M_{j_E} D_{k}   \w]^2 }\lesssim 2^{-\epsilon\iinf},
\end{equation}
where we have changed $\epsilon$.

Using that $\LpOpN{2}{D_k^{*}}, \LpOpN{2}{M_{j_E}^{*}}, \LpOpN{2}{A_{j_{E^c}}^{*}}\lesssim 1$, to prove \eqref{EqnToShowL22} it suffices to show,
\begin{equation}\label{EqnToShowL23}
\LpOpN{2}{A_{j_{E^c}} M_{j_E} D_k D_k^{*} M_{j_E}^{*}A_{j_{E^c}}^{*}  A_{j_{E^c}} M_{j_E} D_k}\lesssim 2^{-\epsilon \iinf}.
\end{equation}
We now expand the last $D_k$ in \eqref{EqnToShowL23} into
$D_k = D_{k_1}^1 D_{k_2}^2\cdots D_{k_\nu}^\nu$.
Using that $\LpOpN{2}{D_{k_\mu}^\mu}\lesssim 1$ for every $\mu$,
to prove \eqref{EqnToShowL23} it suffices to show,
\begin{equation}\label{EqnToShowL24}
\LpOpN{2}{A_{j_{E^c}} M_{j_E} D_k D_k^{*} M_{j_E}^{*}A_{j_{E^c}}^{*}  A_{j_{E^c}} M_{j_E} D_{k_1}^1 D_{k_2}^2\cdots D_{k_{\mu_1}}^{\mu_1}}\lesssim 2^{-\epsilon \iinf}.
\end{equation}

We will prove \eqref{EqnToShowL24} by applying Theorem \ref{ThmGenL2Thm}
with,
\begin{equation*}
S_1\cdots S_L = A_{j_{E^c}} M_{j_E} D_k D_k^{*} M_{j_E}^{*}A_{j_{E^c}}^{*}  A_{j_{E^c}} M_{j_E} D_{k_1}^1 D_{k_2}^2\cdots D_{k_{\mu_1-1}}^{\mu_1-1},
\end{equation*}
\begin{equation*}
R_1=D_{j_{\mu_1}}^{\mu_1}, \quad R_2=0.
\end{equation*}
In the above, we are thinking of $D_{j}$ and $A_{j_{E^{c}}}$
as a product of terms (see \eqref{EqnDefnD} and \eqref{EqnDefnA}),
and assigning an $S_l$ to each term in the product, similarly
for the adjoints.

First we verify that each $S_l$ is controlled at the unit
scale by $\q( Z,\dt\w)$.
We will begin by showing that $A_{j_\mu}^\mu$, $A_\infty^\mu$, and $D_{k_\mu}^\mu$
are controlled at the unit scale.  We will also
show that $M_{j_E}^E$ is controlled at the unit scale.
It will then follow that $D_{k}^{*}$, $M_{j_E}^{*}$,
and $A_{j_{E^c}}^{*}$ are all products of operators
which are controlled at the unit scale, since if
$S_l$ is controlled at the unit scale, so is $S_l^{*}$ (Remark \ref{RmkAdjointControl}).

Consider,
\begin{equation*}
A_{j_\mu}^{\mu} f\q( x\w) =\denum{\psi}{-1}\q( x\w) \int_{t\in \R^{q_\mu}} f\q( \gh_{2^{-j_\mu} t}^\mu \q( x\w) \w) \denum{\psi}{-1}\q( \gh_{2^{-j_\mu}t}^\mu\q( x\w)\w) \sigma\q( t\w) \: dt,
\end{equation*}
and so to show that $A_{j_\mu}^\mu$ is controlled at the unit scale by $\q( Z,\dt\w)$,
it suffices to show that $\gh_{2^{-j_\mu}}^\mu$ is controlled
at the unit scale by $\q( Z,\dt\w)$.
However,
\begin{equation*}
\gh_{2^{-j_\mu} t}^\mu \q( x\w) = \exp\q( 2^{-j_\mu d_1^\mu} t_1 X_1^\mu  + \cdots + 2^{-j_\mu d_{q_\mu}^\mu} t_{q_\mu} X_{q_\mu}^\mu \w)x.
\end{equation*}
By definition, the list of vector fields $\q( 2^{-j_\mu} X^\mu, d^\mu\w)$
is a sublist of the list of vector fields
$\q( 2^{-j} X,\sd\w)$.  Since $j\geq \jz$ coordinatewise, it follows immediately from
Lemma 12.18 of \cite{StreetMultiParameterSingRadonLt}
that $\gh_{2^{-j_\mu} t}^{\mu}$ is controlled at the unit scale
by $\q( Z,d\w) = \q( 2^{-\jz} X,\sd\w)$.
It is trivial that $A_{\infty}^\mu$ is controlled at the unit scale
by $\q( Z,\dt\w)$ since $\gh_t\q( x\w) \equiv x$ is controlled
at the unit scale by $\q(Z,\dt\w)$, trivially.
The proof that $D_{k_\mu}^\mu$ is controlled at the unit scale
by $\q( Z,\dt\w)$ follows just as the proof for $A_{j_\mu}^\mu$.

We now turn to $M_{j_E}$.  Since,
\begin{equation*}
M_{j_E} f\q( x\w) = \denum{\psi}{0}\q( x\w) \int f\q( \gamma_{2^{-j_E}t}\q( x\w)\w) \denum{\psi}{0}\q( \gamma_{2^{-j_E}t}\q( x\w)\w) \sigma\q( t\w)\: dt,
\end{equation*}
we need only show that $\gamma_{2^{-j_E}t}$ is controlled at the unit
scale by $\q( Z,\dt\w)$.  As discussed at the beginning of this section,
$\gamma_{2^{-j}t}$ is controlled by $\q( Z,\dt\w)$.  Since
$\gamma_{2^{-j_E}t}$ is the same as $\gamma_{2^{-j}t}$ except
with some of the coordinates of $t$ set to $0$, the result follows.

This completes the proof that each $S_p$ is controlled at the unit
scale by $\q( Z,\dt\w)$.
To complete our discussion of the $S_p$, we need to show that for
each $l$, $1\leq l\leq r$, $Z_l$ is of the form,
\begin{equation}\label{EqnToShowZAppears}
Z_l\q( x\w) = \frac{1}{\alpha!}\frac{\partial}{\partial t}^\alpha\bigg|_{t=0}\frac{d}{d\epsilon}\bigg|_{\epsilon=0} \gt_{\epsilon t}\circ \gt_t^{-1}\q( x\w),
\end{equation} 
for some $\alpha$, where $\gt$ is one of the functions
defining the maps $S_1,\ldots, S_L$.

Fix $l$, $1\leq l\leq r$.  Recall, $Z_l= 2^{-\jz \cdot d_l} X_l$,
and $d_l$ is nonzero in precisely one component.
Let us suppose that $d_l$ is nonzero in only the $\mu_2$
component.  Thus, $Z_l= 2^{-\jz_{\mu_2} d_l^{\mu_2}} X_l$.
There are two possibilities.  Either $\jz_{\mu_2}= j_{\mu_2}$
or $\jz_{\mu_2}=k_{\mu_2}$.  We deal with the second case first.

Suppose $\jz_{\mu_2}= k_{\mu_2}$.  Pick $p$ such that
$S_p = D_{k_{\mu_2}}^{\mu_2}$.
We have,
\begin{equation*}
D_{k_{\mu_2}}^{\mu_2} f\q( x\w) = \denum{\psi}{-3}\q( x\w) \int f\q( \gt_t\q( x\w)\w) \denum{\psi}{-3}\q( \gt_t\q( x\w)\w) \phi_{\mu_2,k_{\mu_2}}\q( t\w) \: dt,
\end{equation*}
where,
\begin{equation*}
\gt_t\q( x\w) = \gh_{2^{-k_{\mu_2}} t}^{\mu_2}\q( x\w) = \exp\q( 2^{-k_{\mu_2} d_1^{\mu_2}} t_1 X_1^{\mu_2} + \cdots + 2^{-k_{\mu_2} d_{q_{\mu_2}}^{\mu_2}} t_{q_{\mu_2}} X_{q_{\mu_2}}^{\mu_2}\w)x.
\end{equation*}
By the definition of $\q( X^{\mu_2}, d^{\mu_2}\w)$, $\q( X_l, d_l^{\mu_2}\w)$
appears in the list $\q( X^{\mu_2},d^{\mu_2}\w)$ (this uses the fact
that $d_l$ is nonzero in only the $\mu_2$ coordinate).
Hence, $Z_l$ is of the form 
$2^{-\jz_{\mu_2} d_{m}^{\mu_2}} X_m^{\mu_2}=2^{-k_{\mu_2} d_{m}^{\mu_2}} X_m^{\mu_2}$ for some $m$.
It follows that,
\begin{equation*}
Z_l \q( x\w) = \frac{\partial}{\partial t_m}\bigg|_{t=0} \frac{d}{d\epsilon}\bigg|_{\epsilon =1} \gt_{\epsilon t}\circ \gt_{t}^{-1} \q(x \w),
\end{equation*}
which completes the proof of \eqref{EqnToShowZAppears}
in this case.

We now turn to the case when $\jz_{\mu_2}=j_{\mu_2}$.  We separate
this case into two cases:  when $\mu_2\in E$ and when $\mu_2\in E^{c}$.
First we deal with the case when $\mu_2\in E^{c}$.  In that case,
we use $p$ such that $S_p=A_{j_{\mu_2}}^{\mu_2}$,
and the proof of \eqref{EqnToShowZAppears} proceeds
just as in the case for $D_{k_{\mu_2}}^{\mu_2}$ above.

We turn to the case when $\mu_2\in E$.  In this case,
we use $p$ such that $S_p= M_{j_E}$.
We have,
\begin{equation*}
M_{j_E} f\q( x\w) = \denum{\psi}{0}\q( x\w)\int f\q( \gamma_{2^{-j_E}t}\q( x\w)\w) \denum{\psi}{0}\q( \gamma_{2^{-j_E}t}\q( x\w)\w) \sigma\q(t\w)\: dt.
\end{equation*}
Hence, we take $\gt_t= \gamma_{2^{-j_E}t}$.  Note, if
$\gt_t$ were instead taken to be equal to $\gamma_{2^{-j}t}$,
then \eqref{EqnToShowZAppears} would follow immediately from \eqref{EqnHowTheXjsCome}
and \eqref{EqnHowThedjsCome}.
$\gamma_{2^{-j_E}t}$ is just $\gamma_{2^{-j}t}$ with some of the coordinates
set to $0$.  Thus, to prove \eqref{EqnToShowZAppears}, we need
only show that $\partial_t^{\alpha}$ in \eqref{EqnHowTheXjsCome}
only involves those coordinates of $2^{-j_E}t$ which are not
identically $0$.  Let $\alpha$ be the multi-index from \eqref{EqnHowTheXjsCome}.
We know that $\deg\q( \alpha\w) = d_l$ and therefore is nonzero
in only the $\mu_2$ coordinate.  Thus if $t_m$ is a coordinate
appearing in $\partial_t^{\alpha}$, then $e_m$ must be nonzero
in only the $\mu_2$ coordinate.  Since $j_E$ is $\infty$
only in those coordinates $\mu\in E^{c}$, it follows
that $2^{-j_E}t$ is not identically $0$ in any of the coordinates
appearing in $\partial_t^{\alpha}$.  \eqref{EqnToShowZAppears} follows.

In conclusion, the operators $S_1,\ldots, S_L$ satisfy
all the hypotheses of Theorem \ref{ThmGenL2Thm}.
We now turn to $R_1,R_2$.
Recall, we are presently considering the case
$\iinf = k_{\mu_1}-j_{\mu_1}$.
We have,
\begin{equation*}
\begin{split}
D_{k_\mu}^\mu f\q( x\w) &= \denum{\psi}{-3}\q( x\w) \int f\q( \gh_{2^{-k_{\mu_1}}t}^{\mu_1}\q( x\w)\w) \denum{\psi}{-3}\q(\gh_{2^{-k_{\mu_1}}t}^{\mu_1}\q( x\w) \w) \phi_{\mu_1,k_{\mu_1}} \q( t\w) \: dt\\
&\quad -\denum{\psi}{-3}\q( x\w) \int f\q( \gh_{0}^{\mu_1}\q( x\w)\w) \denum{\psi}{-3}\q(\gh_{0}\q( x\w) \w) \phi_{\mu_1,k_{\mu_1}} \q( t\w) \: dt\\
&=:R_1f\q( x\w) - R_2f\q(x\w),
\end{split}
\end{equation*}
where we have used that $\int \phi_{\mu_1,k_{\mu_1}}=0$ (since $k_{\mu_1}>0$, which follows from our assumption that $\iinf>0$) and therefore
$R_2=0$.

Let $c_0=\min_{1\leq m\leq q_{\mu}} d_{m}^{\mu}>0$.
Define $\zeta=2^{-c_0 \iinf}$.  Let,
\begin{equation*}
\gt_{t,s}\q( x\w) = \exp\q( 2^{-\jz_{\mu_1} d_1^{\mu_1}} 2^{-\q(k_{\mu_1}-\jz_{\mu_1}\w) \q(d_1^{\mu_1}-c_0\w) } s t_1 + \cdots+2^{-\jz_{\mu_1} d_{q_{\mu_1}}^{\mu_1}} 2^{-\q(k_{\mu_1}-\jz_{\mu_1}\w) \q(d_{q_{\mu_1}}^{\mu_1}-c_0\w) } st_{q_{\mu_1}}  \w)x.
\end{equation*}
It is simple to verify that $\q( Z,\dt\w)$ controls
$\gt_{t,s}$ at the unit scale.  In particular, this follows from
the fact that $\q( Z,\dt\w)$ controls $\gh_{2^{-\jz_{\mu_1}} t}^{\mu_1}$
at the unit scale (see Lemma 12.18 of \cite{StreetMultiParameterSingRadonLt}), and $\gt_t$ is just $\gh_{2^{-\jz_{\mu_1}} t}$
with $t_m$ replaced by $2^{-\q(k_{\mu_1}-\jz_{\mu_1}\w) \q(d_m^{\mu_1}-c_0\w) } s t_m$.

Let $\zeta = 2^{-c_0\q(k_{\mu_1}-\jz_{\mu_1} \w)}=2^{-c_0\iinf}$.
Note that,
\begin{equation*}
\gh_{2^{-k_{\mu_1}}t}^{\mu_1}= \gt_{t, \zeta}.
\end{equation*}
We therefore have,
\begin{equation*}
\begin{split}
\q( R_1-R_2\w) f\q( x\w) &= \denum{\psi}{-3}\q( x\w) \int f\q( \gt_{t,\zeta}\q( x\w)\w) \denum{\psi}{-3}\q(\gt_{t,\zeta}\q( x\w) \w) \phi_{\mu_1,k_{\mu_1}} \q( t\w) \: dt\\
&\quad -\denum{\psi}{-3}\q( x\w) \int f\q( \gt_{t,0}\q( x\w)\w) \denum{\psi}{-3}\q(\gt_{t,0}\q( x\w) \w) \phi_{\mu_1,k_{\mu_1}} \q( t\w) \: dt.
\end{split}
\end{equation*}
This completes the proof that $R_1-R_2$ has the desired form.

Theorem \ref{ThmGenL2Thm} applies to show that there exists $\epsilon>0$
such that,
\begin{equation*}
\LpOpN{2}{S_1\cdots S_L\q( R_1-R_2\w)}\lesssim \zeta^{\epsilon} = 2^{-\epsilon' \iinf},
\end{equation*}
for some $\epsilon'>0$.
This completes the proof of \eqref{EqnToShowL24}
and therefore shows,
\begin{equation*}
\LpOpN{2}{B_j D_k}\lesssim 2^{-\epsilon'' \q|j-k\w|},
\end{equation*}
in this case.

We return to deal with the second case:  there is a $\mu_1$
such that $\iinf = j_{\mu_1}-k_{\mu_1}$.
We wish to show,
\begin{equation*}
\LpOpN{2}{B_j D_k}\lesssim 2^{-\epsilon\iinf}.
\end{equation*}
Applying the triangle inequality, it suffices to show
for every $E\subseteq \nuset\setminus \q\{\mu_1\w\}$,
\begin{equation*}
\LpOpN{2}{ \q[A_{j_{\Ewmuc}}M_{j_{\Ewmu}} - A_{j_{E^c}}M_{j_E} \w]D_k }\lesssim 2^{-\epsilon \iinf}.
\end{equation*}
Let $O_{j,k,E}  =\q[A_{j_{\Ewmuc}}M_{j_{\Ewmu}} - A_{j_{E^c}}M_{j_E} \w]D_k $.
Thus, we wish to show,
\begin{equation*}
\LpOpN{2}{O_{j,k,E}^{*} O_{j,k,E}}\lesssim 2^{-\epsilon \iinf}.
\end{equation*}
Applying the triangle inequality to the term $O_{j,k,E}^{*}$,
we see that it suffices to show for every $F\subseteq \nuset$,
\begin{equation*}
\LpOpN{2}{D_{k}^{*} M_{j_F}^{*} A_{j_{F^c}}^{*} O_{j,k,E}}\lesssim 2^{-\epsilon \iinf}.
\end{equation*}
Using that $\LpOpN{2}{O_{j,k,E}}\lesssim 1$, we see,
\begin{equation*}
\begin{split}
\LpOpN{2}{D_k^{*} M_{j_F}^{*} A_{j_{F^c}}^{*} O_{j,k,E}}^2 &= \LpOpN{2}{O_{j,k,E}^{*} A_{j_{F^{c}}} M_{j_F} D_k D_k^{*} M_{j_F}^{*} A_{j_{F^c}}^{*} O_{j,k,E} }\\
&\lesssim \LpOpN{2}{ A_{j_{F^{c}}} M_{j_F} D_k D_k^{*} M_{j_F}^{*} A_{j_{F^c}}^{*} O_{j,k,E}}.
\end{split}
\end{equation*}
Thus, it suffices to show,
\begin{equation}\label{EqnToShowL2PO}
\LpOpN{2}{P_{j,k,F} O_{j,k,E}}\lesssim 2^{-\epsilon \iinf},
\end{equation}
where $P_{j,k,F}=  A_{j_{F^{c}}} M_{j_F} D_k D_k^{*} M_{j_F}^{*} A_{j_{F^c}}^{*} $.

We now use that $\LpOpN{2}{D_k}, \LpOpN{2}{M_{j_E}}, \LpOpN{2}{M_{j_{\Ewmu}}}\lesssim 1$, to see,
\begin{equation*}
\begin{split}
\LpOpN{2}{P_{j,k,F}O_{j,k,E}} \lesssim &\LpOpN{2}{P_{j,k,F} \q[ A_{j_{\Ewmuc }}-A_{j_{E^c}}\w] }\\
&+\sum_{G\in \q\{E^{c}, \Ewmuc \w\} } \LpOpN{2}{P_{j,k,F} A_{j_G}\q[M_{j_{\Ewmu}}-M_{j_E}\w] }
\end{split}
\end{equation*}

Thus it suffices to show, for every $F,G\subseteq \nuset$ and every
$E\subseteq \nuset\setminus \q\{\mu_1\w\}$,
\begin{equation}\label{EqnToShowL2MCancel}
\LpOpN{2}{P_{j,k,F} A_{j_G} \q[M_{j_{\Ewmu}}-M_{j_E}\w]}\lesssim 2^{-\epsilon \iinf},
\end{equation}
\begin{equation}\label{EqnToShowL2ACancel}
\LpOpN{2}{P_{j,k,F}\q[A_{j_{\Ewmu}} - A_{j_E}\w]}\lesssim 2^{-\epsilon \iinf},
\end{equation}
where we have reversed the roles of $E$ and $E^c$ in \eqref{EqnToShowL2ACancel}.

We begin with \eqref{EqnToShowL2ACancel}.
Write $j_E=\q( j_E^1,\ldots, j_E^\nu\w) \in \Ninf^{\nu}$.
Note that,
\begin{equation*}
A_{j_{\Ewmu}}-A_{j_E} = A_{j_E^1}^1 A_{j_E^2}^2\cdots A_{j_{E}^{\mu_1-1}}^{\mu_1-1} \q[ A_{j_{\mu_1}}^{\mu_1} - A_{\infty}^{\mu_1}\w] A_{j_E^{\mu_1+1}}^{\mu_1+1}\cdots A_{j_E^\nu}^\nu.
\end{equation*}
Using the fact that $\LpOpN{2}{A_{j_E^{\mu}}^{\mu}}\lesssim 1$ for every $\mu$,
to prove \eqref{EqnToShowL2ACancel} it suffices to show,
\begin{equation}\label{EqnToShowL2ACancel2}
\LpOpN{2}{P_{j,k,F} A_{j_E^1}^1 \cdots A_{j_{E}^{\mu_1-1}}^{\mu_1-1} \q[ A_{j_{\mu_1}}^{\mu_1} - A_{\infty}^{\mu_1}\w]}\lesssim 2^{-\epsilon \iinf}.
\end{equation}

To prove \eqref{EqnToShowL2ACancel2} we will apply Theorem
\ref{ThmGenL2Thm} with,
\begin{equation*}
S_1\cdots S_L = P_{j,k,F} A_{j_E^1}^1 \cdots A_{j_{E}^{\mu_1-1}}^{\mu_1-1} 
=A_{j_{F^c}} M_{j_F} D_k D_k^{*} M_{j_F}^{*} A_{j_{F^c}}^{*} A_{j_E^1}^1 \cdots A_{j_{E}^{\mu_1-1}}^{\mu_1-1},
\end{equation*}
\begin{equation*}
R_1 = A_{j_{\mu_1}}^{\mu_1},\quad R_2 = A_\infty^{\mu_1}.
\end{equation*}
As before, we take $\q( Z,\dt\w) = \q( 2^{-\jz}X, \sd\w)$.
The proof that the term $S_1\cdots S_L$ is of the proper
form follows just as before.  We, therefore, concern
ourselves only with showing that $R_1$ and $R_2$
have the proper form.

We have,
\begin{equation*}
R_1 f\q( x\w) = \denum{\psi}{-1}\q( x\w) \int f\q( \gh_{2^{-j_\mu}t}^\mu\q( x\w)\w) \denum{\psi}{-1}\q( \gh_{2^{-j_\mu} t}\q(x\w)\w)\sigma\q( t\w) \: dt,
\end{equation*}
\begin{equation*}
R_2 f\q( x\w) = \denum{\psi}{-1}^2\q( x\w)\q[\int \sigma\w] f\q( x\w)=\denum{\psi}{-1}\q( x\w) \int f\q( \gh_{0}^\mu\q( x\w)\w) \denum{\psi}{-1}\q( \gh_{0}\q(x\w)\w)\sigma\q( t\w) \: dt.
\end{equation*}
Setting $c_0=\min_{1\leq m\leq q_{\mu_1}} d_l^{\mu_1}$ and
$\zeta=2^{-c_0\iinf}$, the proof that $R_1-R_2$ is of the proper
form follows just as in the previous case.
Theorem \ref{ThmGenL2Thm} applies to show
\eqref{EqnToShowL2ACancel2}, thereby establishing
\eqref{EqnToShowL2ACancel}.

We turn, finally, to showing \eqref{EqnToShowL2MCancel}.  We will
apply Theorem \ref{ThmGenL2Thm} with
\begin{equation*}
S_1\cdots S_L = P_{j,k,F} A_{j_G} = A_{j_{F^c}} M_{j_F} D_k D_k^{*} M_{j_F}^{*} A_{j_{F^c}}^{*} A_{j_G},
\end{equation*}
\begin{equation*}
R_1 = M_{j_{\Ewmu}}, \quad R_2=M_{j_E}.
\end{equation*}
As before, we take $\q( Z,\dt\w)= \q( 2^{-\jz} X,\sd\w)$.
That $S_1\cdots S_L$ has the proper form to apply
Theorem \ref{ThmGenL2Thm} follows just as before.
We therefore only concern ourselves with $R_1$ and $R_2$.

Consider,
\begin{equation*}
M_{j_E} f\q( x\w) = \denum{\psi}{0}\q( x\w) \int f\q( \gamma_{2^{-j_E}t}\q( x\w)\w) \denum{\psi}{0}\q( \gamma_{2^{-j_E}t}\q( x\w)\w) \sigma\q( t\w)\: dt,
\end{equation*}
with a similar formula for $M_{j_{\Ewmu}}$.
For $t\in \R^N$, separate $t$ into two variables: $t=\q( t_1,t_2\w)$.
$t_1$ will consist of those coordinates $t_l$ such that 
$e_l^{\mu_1}\ne 0$, and $t_2$ will denote the rest of the coordinates.
Write $t_1=\q( t_{l_1},\ldots, t_{l_{N_1}}\w)$.  
Let $i\in \Ninf^{\nu}$ be given by,
\begin{equation*}
i_\mu =
\begin{cases}
j_E^{\mu} & \text{if }\mu\ne \mu_1,\\
k_{\mu_1} & \text{if }\mu=\mu_1.
\end{cases}
\end{equation*}
Notice,
\begin{equation*}
2^{-j_{\Ewmu}} \q( t_1,t_2\w) = 2^{-i}\q( \q( 2^{\q(k_{\mu_1}-j_{\mu_1}\w) e_{l_1}^{\mu_1} }t_{l_1},\cdots, 2^{\q(k_{\mu_1}-j_{\mu_1}\w) e_{l_{N_1}}^{\mu_1} }t_{l_{N_1}} \w), t_2 \w),
\end{equation*}
\begin{equation*}
2^{-j_{E}} \q( t_1,t_2\w) = 2^{-i}\q( 0, t_2\w).
\end{equation*}
Define $c_0=\min\q\{e_{l_1}^{\mu_1},\cdots, e_{l_{N_1}}^{\mu_1}\w\}>0$,
and $\zeta= 2^{-c_0\q(j_{\mu_1}-k_{\mu_1}\w)} = 2^{-c_0\iinf}$.
Thus,
\begin{equation*}
2^{-j_{\Ewmu}} \q( t_1,t_2\w) = 2^{-i}\q( \q( 2^{\q(k_{\mu_1}-j_{\mu_1}\w) \q(e_{l_1}^{\mu_1}-c_0\w) }\zeta t_{l_1},\cdots, 2^{\q(k_{\mu_1}-j_{\mu_1}\w) \q(e_{l_{N_1}}^{\mu_1}-c_0\w) }\zeta t_{l_{N_1}} \w), t_2 \w).
\end{equation*}

Define,
\begin{equation*}
\gt\q(\q( t_1,t_2,s\w) x\w) = \gamma\q( 2^{-i}\q( 
\q( 2^{\q(k_{\mu_1}-j_{\mu_1}\w) \q(e_{l_1}^{\mu_1}-c_0\w) }s t_{l_1},\cdots, 2^{\q(k_{\mu_1}-j_{\mu_1}\w) \q(e_{l_{N_1}}^{\mu_1}-c_0\w) }s t_{l_{N_1}} \w), t_2 
 \w) , x\w).
\end{equation*}
We claim that $\q( Z,\dt\w)$ controls $\gt$ at the unit scale.
Indeed, we know $\q(Z,\dt\w)$ controls
$\gamma_{2^{-\jz} t}$ at the unit scale,\footnote{As discussed before,
this follows directly from our assumptions on $\gamma$.}
 and since $i\geq \jz$ coordinatewise,
we have that $\q( Z,\dt\w)$ controls $\gamma_{2^{-i}t}$ at the unit scale.
Now the result follows easily from the definition of control, since $\q(k_{\mu_1}-j_{\mu_1}\w)\q(e_{l_m}^{\mu_1}-c_0\w)<0$ for every $m$.

Note that,
\begin{equation*}
R_1 f\q( x\w) = \denum{\psi}{0}\q( x\w) \int f\q( \gt_{t,\zeta}\q( x\w)\w) \denum{\psi}{0}\q(\gt_{t,\zeta}\q( x\w) \w)\sigma\q( t\w) \: dt,
\end{equation*}
\begin{equation*}
R_2 f\q( x\w) = \denum{\psi}{0}\q( x\w) \int f\q( \gt_{t,0}\q( x\w)\w) \denum{\psi}{0}\q(\gt_{t,0}\q( x\w) \w)\sigma\q( t\w) \: dt.
\end{equation*}
Thus, $R_1-R_2$ has the proper form for Theorem \ref{ThmGenL2Thm}.
Theorem \ref{ThmGenL2Thm} applies to show,
\begin{equation*}
\LpOpN{2}{S_1\cdots S_L\q( R_1-R_2\w)}\lesssim \zeta^{\epsilon}= 2^{-\epsilon' \iinf},
\end{equation*}
which establishes \eqref{EqnToShowL2MCancel} and completes
the proof of
\eqref{EqnToShowBjDk}.

We now make comments on the modifications of the above
necessary to deal with the other parts
of Theorem \ref{ThmL2Thm}.  When considering
$D_{j_1}T_{j_2}D_{j_3}$ one takes
$\jz=j_1\wedge j_2\wedge j_3$ and $\iinf = \max_{1\leq k,l\leq 3} \q|j_k-j_l\w|_\infty\approx \diam{j_1,j_2,j_3}$.
Also, we set $\q( Z,\dt\w) =\q( 2^{-\jz} X,\sd\w)$.  One proceeds
in essentially the same manner as $B_{j_1}D_{j_2}$.  The $D_j$
terms behave just as before.  When $T_{j_2}$ appears
as a $S_p$ for some $p$, it can be treated
just as $M_{j_{\nuset}}$ was treated above.  When
$\iinf=j_2^{\mu_1}-j_1^{\mu_1}$ or $\iinf=j_2^{\mu_1}-j_3^{\mu_1}$, for some $\mu_1$,
$T_{j_2}$ must also be used as $R_1-R_2$ in the above argument.
In that case, one uses that $\int \vsig_{j_2}\q( t\w) d t_{\mu_1}=0$
and setting $R_2=0$, one can
write $T_{j_2}=R_1=R_1-R_2$ in a form that works just as
$M_{j_{\nuset}} - M_{j_{\nuset\setminus \q\{\mu_1\w\}}}$ did above.
See also \cite{StreetMultiParameterSingRadonLt} for details on this.

When considering, instead, $D_{j_1}D_{j_2}D_{j_3}^{*}D_{j_4}^{*}$
or $D_{j_1}^{*} D_{j_2}^{*} D_{j_3} D_{j_4}$,
one takes $\jz=j_1\wedge j_2\wedge j_3\wedge j_4$ and
$\iinf = \max_{1\leq k,l\leq 4} \q| j_k-j_l \w|_\infty \approx \diam{j_1,j_2,j_3,j_4}$.
Also, one takes $\q( Z,\dt\w) =\q( 2^{-\jz}X,\sd\w)$.  With these
choices everything proceeds as above with simple modifications.
We leave the details to the interested reader.

\section{Square functions and the reproducing formula}\label{SectionSquareFunc}
Using the operators $D_j$ defined in Section \ref{SectionAuxOps}
we develop, in this section, a Littlewood-Paley square function
and a Calder\'on-type ``reproducing formula'' which will
be essential to our proof of Theorems
\ref{ThmMainThmSecondPass} and \ref{ThmMainMaxThm}.

Recall,
\begin{equation*}
\sum_{j\in \N^{\nu}} D_j = \denum{\psi}{-3}^{2\nu}.
\end{equation*}
For notational convenience, we define $D_j=0$ for $j\in \Z^{\nu}\setminus \N^{\nu}$.
For $M\in \N$, define,
\begin{equation*}
U_M = \sum_{\substack{j\in \N^{\nu}\\ \q|l\w|\leq M}} D_j D_{j+l},
\end{equation*}
\begin{equation*}
R_M = \sum_{\substack{j\in \N^{\nu}\\ \q|l\w|> M}} D_j D_{j+l};
\end{equation*}
so that $U_M+R_M=\denum{\psi}{-3}^{4\nu}$.  The main results
of this section are:
\begin{thm}[A Calder\'on-type ``reproducing formula'']\label{ThmReproduce}
Fix $p_0$, $1<p_0<\infty$.  There exists $M=M\q( p_0\w)$,
and a bounded map $V_M:L^{p_0}\rightarrow L^{p_0}$ such that,
\begin{equation*}
\denum{\psi}{-2} U_M V_M = \denum{\psi}{-2} = V_M U_M \denum{\psi}{-2}.
\end{equation*}
\end{thm}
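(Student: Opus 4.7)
The plan is to invert the identity $U_M + R_M = \denum{\psi}{-3}^{4\nu}$ via a Neumann series. Since $\denum{\psi}{-2}\prec\denum{\psi}{-3}$, pointwise multiplication by $\denum{\psi}{-3}^{4\nu}$ acts as the identity on every function in the support of, or with output in the support of, $\denum{\psi}{-2}$; thus as operators $\denum{\psi}{-2}\denum{\psi}{-3}^{4\nu}=\denum{\psi}{-3}^{4\nu}\denum{\psi}{-2}=\denum{\psi}{-2}$. Writing $U_M=\denum{\psi}{-3}^{4\nu}-R_M$ yields the key identities
\begin{equation*}
\denum{\psi}{-2} U_M = \denum{\psi}{-2}\q(I-R_M\w),\qquad U_M \denum{\psi}{-2} = \q(I-R_M\w)\denum{\psi}{-2}.
\end{equation*}
Consequently, if one can choose $M=M\q(p_0\w)$ so that $\LpOpN{p_0}{R_M}<1$, then $V_M:=\q(I-R_M\w)^{-1}=\sum_{k\geq 0}R_M^k$ defines a bounded operator on $L^{p_0}$, and a direct substitution verifies both $\denum{\psi}{-2} U_M V_M = \denum{\psi}{-2}(I-R_M)(I-R_M)^{-1}=\denum{\psi}{-2}$ and $V_M U_M \denum{\psi}{-2}=(I-R_M)^{-1}(I-R_M)\denum{\psi}{-2}=\denum{\psi}{-2}$.

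The theorem therefore reduces to the quantitative estimate $\LpOpN{p_0}{R_M}\to 0$ as $M\to\infty$. I would prove this by grouping terms according to the relative shift: letting $T_l:=\sum_{j\in\N^\nu}D_j D_{j+l}$, so that $R_M=\sum_{\q|l\w|>M}T_l$, the central claim is that for some $\epsilon=\epsilon\q(p_0\w)>0$ and every $l\in\Z^\nu$,
\begin{equation*}
\LpOpN{p_0}{T_l}\lesssim 2^{-\epsilon\q|l\w|}.
\end{equation*}
The $L^2$ version follows from Theorem~\ref{ThmL2Thm}: the bound $\LpOpN{2}{D_{j_1}D_{j_2}D_{j_3}^{*}D_{j_4}^{*}}\lesssim 2^{-\epsilon_2 \diam{j_1,j_2,j_3,j_4}}$ applied with $\q(j_1,j_2,j_3,j_4\w)=\q(j,j+l,j'+l,j'\w)$ shows that the pieces $S_j:=D_jD_{j+l}$ satisfy $\LpOpN{2}{S_j S_{j'}^{*}}\lesssim 2^{-\epsilon_2 \q(\q|l\w|+\q|j-j'\w|\w)/2}$ and similarly for $S_j^{*}S_{j'}$; Cotlar--Stein then yields $\LpOpN{2}{T_l}\lesssim 2^{-c\q|l\w|}$ for some $c>0$.

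To upgrade from $L^2$ to $L^{p_0}$, I would interpolate with a bound on $T_l$ \emph{uniform} in $l$, valid on $L^p$ for $p$ on either side of $p_0$. Such uniform bounds are not immediate, because no multi-parameter Calder\'on--Zygmund theory is available \emph{a priori} (cf.\ Remark~\ref{RmkNoCZTheory}); the argument must exploit the product structure $D_j=D_{j_1}^1\cdots D_{j_\nu}^\nu$ and iterate the single-parameter theory of Section~\ref{SectionCZOps}. Parameter by parameter, the single-parameter pieces $\sum_{j_\mu}D_{j_\mu}^\mu D_{j_\mu+l_\mu}^\mu$ are controlled by Theorem~\ref{ThmSingleParamSingInt} together with single-parameter Littlewood--Paley and Fefferman--Stein vector-valued inequalities; composing across the $\nu$ parameters produces $\LpOpN{p}{T_l}\lesssim 1$ uniformly. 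Interpolating this with the $L^2$ decay above yields the desired geometric decay for $T_l$ on $L^{p_0}$, and summing $\sum_{\q|l\w|>M}2^{-\epsilon\q|l\w|}$ makes $\LpOpN{p_0}{R_M}<1$ for $M=M\q(p_0\w)$ large enough, closing the argument.

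The principal obstacle is this last step: the uniform-in-$l$ $L^p$ bound on $T_l$ in the multi-parameter setting. It is precisely what forces $M$ to depend on $p_0$ and what motivates developing the multi-parameter Littlewood--Paley theory by iteration from the single-parameter Calder\'on--Zygmund theory of Section~\ref{SectionCZOps} rather than quoting a ready-made result.
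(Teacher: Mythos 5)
Your architecture is the same as the paper's: write $U_M = \denum{\psi}{-3}^{4\nu} - R_M$, use $\denum{\psi}{-2}\prec\denum{\psi}{-3}$ to turn this into a Neumann-invertible perturbation of the identity against $\denum{\psi}{-2}$, and reduce to showing $\LpOpN{p_0}{R_M}\to 0$, which you get by interpolating an $L^2$ bound with geometric decay in $M$ against a uniform $L^p$ bound. Your $L^2$ step via Cotlar--Stein applied to $S_j:=D_jD_{j+l}$ using the fourth estimate of Theorem~\ref{ThmL2Thm} is exactly the content of the paper's Lemma~\ref{LemmaGoodL2RM} (the paper applies Cotlar--Stein directly to $R_M=\sum_{|l|>M}T_l$ and gets $\LpOpN{2}{R_M}\lesssim 2^{-\epsilon M}$; you do it per-$l$ and then sum, which is equivalent). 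Your Neumann series $V_M=(I-R_M)^{-1}$ is a slight simplification of the paper's $V_M=\sum_{m\geq 0}\psi(R_M\psi)^m$; both work once $\LpOpN{p_0}{R_M}<1$.

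The one place you go off the rails is the uniform $L^p$ bound on $T_l=\sum_j D_jD_{j+l}$. Your description --- decompose $T_l$ ``parameter by parameter'' into single-parameter pieces $\sum_{j_\mu}D^\mu_{j_\mu}D^\mu_{j_\mu+l_\mu}$ and compose --- does not literally work, since the factors $D^\mu_{j_\mu}$ for different $\mu$ do not commute, so $D_jD_{j+l}=D^1_{j_1}\cdots D^\nu_{j_\nu}D^1_{j_1+l_1}\cdots D^\nu_{j_\nu+l_\nu}$ cannot be rearranged into a product over $\mu$. What actually gives the bound (and is the paper's Lemma~\ref{LemmaBadLpRM}) is duality: $|\ip{g}{T_l f}| = |\sum_j\ip{D_j^*g}{D_{j+l}f}| \le \int\q(\sum_j|D_j^*g|^2\w)^{1/2}\q(\sum_j|D_{j+l}f|^2\w)^{1/2}$, and then the multi-parameter square function estimate \eqref{EqnSquareLessf} (Lemma~\ref{LemmaSquareLessf}) applied to each factor. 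That square function estimate is itself proved by the Khintchine/random-signs reduction, which is where the product structure $D_j=D^1_{j_1}\cdots D^\nu_{j_\nu}$ enters cleanly (since the random-sign operator factors, one only needs Theorem~\ref{ThmSingleParamSingInt} parameter by parameter) --- so your instinct that everything is built by iterating the single-parameter theory is right, but the iteration happens inside the square function estimate, not inside $T_l$ itself. With Lemma~\ref{LemmaBadLpRM} in place, the rest of your interpolation closes exactly as the paper's does.
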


\begin{rmk}
Strictly speaking, Theorem \ref{ThmReproduce} does not give a reproducing
formula (one would need $V_M=I$ for it to be a reproducing formula).
However, we will use it in the same way that one often
uses the Calder\'on reproducing formula, which is why we have
labeled it such.
\end{rmk}

\begin{thm}[The Littlewood-Paley square function]\label{ThmSquare}
For every $p$, $1<p<\infty$,
\begin{equation}\label{EqnfLessSquare}
\LpN{p}{\denum{\psi}{-2} f} \lesssim \LpN{p}{\q(\sum_{j\in \N^\nu}\q|D_j\denum{\psi}{-2} f\w|^2\w)^{\frac{1}{2}}},
\end{equation}
\begin{equation}\label{EqnSquareLessf}
\LpN{p}{\q(\sum_{j\in \N^\nu}\q|D_j f\w|^2\w)^{\frac{1}{2}}}\lesssim \LpN{p}{f}.
\end{equation}
\end{thm}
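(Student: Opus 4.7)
The plan is to prove \eqref{EqnSquareLessf} first via iterated single-parameter Littlewood-Paley theory, and then deduce \eqref{EqnfLessSquare} by duality using the reproducing formula of Theorem \ref{ThmReproduce}.

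For \eqref{EqnSquareLessf}, I would exploit the product structure $D_j = D_{j_1}^1 D_{j_2}^2 \cdots D_{j_\nu}^\nu$ coming from \eqref{EqnDefnD}. For each fixed $\mu$, the single-parameter family $\{D_{j_\mu}^\mu\}_{j_\mu\in \N}$ is built from the bumps $\phi_{\mu,j_\mu}$, which form a bounded subset of $C_0^\infty$ with $\int \phi_{\mu,j_\mu}=0$ for $j_\mu>0$. A Rademacher randomization $\sum_{j_\mu}r_{j_\mu}(\omega)D_{j_\mu}^\mu$ has kernel in $\sK(q_\mu,d^\mu,a,1,1)$ uniformly in the signs, hence is bounded on $L^p$ by Theorem \ref{ThmSingleParamSingInt}; Khintchine's inequality then yields the single-parameter square function bound
\[
\LpN{p}{\q(\sum_{j_\mu}\q|D_{j_\mu}^\mu g\w|^2\w)^{1/2}} \lesssim \LpN{p}{g}.
\]
Upgrading this to its vector-valued Fefferman--Stein form (via Calder\'on--Zygmund theory on the space of homogeneous type of Remark \ref{RmkSingleParamHomogType}) and applying the upgraded estimate once for each $\mu = 1,\ldots,\nu$---peeling off the sums over $j_\mu$ one index at a time---yields \eqref{EqnSquareLessf}.

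For \eqref{EqnfLessSquare}, I would apply Theorem \ref{ThmReproduce} with $p_0=p$ to write $\denum{\psi}{-2} f = V_M U_M \denum{\psi}{-2} f = \sum_{|l|\leq M,\,j} V_M D_j D_{j+l} \denum{\psi}{-2} f$, and dualize against a unit function $g\in L^{p'}$:
\[
\ip{\denum{\psi}{-2} f}{g} = \sum_{|l|\leq M}\sum_j \ip{D_{j+l}\denum{\psi}{-2} f}{D_j^* V_M^* g}.
\]
Pointwise Cauchy--Schwarz in $j$ followed by H\"older in $x$ bounds each $l$-slice by the product $\LpN{p}{\q(\sum_j \q|D_j \denum{\psi}{-2} f\w|^2\w)^{1/2}} \cdot \LpN{p'}{\q(\sum_j \q|D_j^* V_M^* g\w|^2\w)^{1/2}}$, where the second factor is controlled by $\LpN{p'}{g}$: this uses \eqref{EqnSquareLessf} applied to the adjoint family $\{D_j^*\}$ (which shares the same structural properties by Remark \ref{RmkAdjointControl}) together with the boundedness of $V_M^*$ on $L^{p'}$ from Theorem \ref{ThmReproduce}. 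Summing the finitely many $|l|\leq M$ terms and taking supremum over $g$ gives \eqref{EqnfLessSquare}.

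The principal technical obstacle is the vector-valued Fefferman--Stein upgrade of Theorem \ref{ThmSingleParamSingInt}, which is not literally stated there but follows by repeating the Calder\'on--Zygmund argument of Section \ref{SectionCZOps} for Hilbert-space-valued inputs, using the charts $\Phi$ to reduce to classical vector-valued Calder\'on--Zygmund theory on Euclidean space. A secondary ordering issue is that Theorem \ref{ThmReproduce} is stated alongside Theorem \ref{ThmSquare}: the proof would first establish \eqref{EqnSquareLessf}, then use it together with Theorem \ref{ThmL2Thm} to prove Theorem \ref{ThmReproduce} (by showing $\LpOpN{p}{R_M} \to 0$ as $M\to\infty$ so that a Neumann-series construction of $V_M$ succeeds on the range of $\denum{\psi}{-2}$), and only then deduce \eqref{EqnfLessSquare} via the duality above.
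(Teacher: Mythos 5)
Your overall architecture is right and the second half of your argument (the duality proof of \eqref{EqnfLessSquare} using Theorem \ref{ThmReproduce}, Cauchy--Schwarz, H\"older, the adjoint square function, and boundedness of $V_M^{*}$) matches the paper exactly, including the dependency ordering \eqref{EqnSquareLessf} $\Rightarrow$ Theorem \ref{ThmReproduce} $\Rightarrow$ \eqref{EqnfLessSquare} that you flag at the end: the paper proves \eqref{EqnSquareLessf} as Lemma \ref{LemmaSquareLessf}, uses it in Lemma \ref{LemmaBadLpRM}, interpolates with the Cotlar--Stein $L^2$ decay of Lemma \ref{LemmaGoodL2RM} (via Theorem \ref{ThmL2Thm}) to obtain $\LpOpN{p_0}{R_M}\to 0$, builds $V_M$ by Neumann series, and only then deduces \eqref{EqnfLessSquare}.

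Where you genuinely diverge is in the proof of \eqref{EqnSquareLessf}. You randomize over a single parameter index $j_\mu$, invoke Theorem \ref{ThmSingleParamSingInt} to get the scalar one-parameter square function, and then try to iterate over $\mu$ by upgrading to a vector-valued Fefferman--Stein inequality, reducing to Hilbert-space-valued Calder\'on--Zygmund theory on the leafwise space of homogeneous type. The paper instead randomizes over \emph{all} indices simultaneously: it reduces \eqref{EqnSquareLessf} to showing that
\[
\sum_{j_1,\ldots,j_\nu\in\N}\epsilon^1_{j_1}\cdots\epsilon^\nu_{j_\nu}D_{(j_1,\ldots,j_\nu)}
\]
is bounded on $L^p$ uniformly in the $\pm 1$ sequences $\q\{\epsilon^\mu_{j_\mu}\w\}$, and this operator factors exactly as the product $\q(\sum_{j_1}\epsilon^1_{j_1}D^1_{j_1}\w)\cdots\q(\sum_{j_\nu}\epsilon^\nu_{j_\nu}D^\nu_{j_\nu}\w)$, with each factor handled by the \emph{scalar} Theorem \ref{ThmSingleParamSingInt}. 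The multi-parameter Khintchine inequality then gives \eqref{EqnSquareLessf} with no vector-valued extension needed. Your route can also be made to work, but the Fefferman--Stein upgrade you correctly identify as your ``principal technical obstacle'' is real, unstated in the paper, and not the cheapest way through: if you insist on iterating one index at a time, you can still avoid vector-valued Calder\'on--Zygmund theory entirely by applying Khintchine in \emph{both} the new index $j_\mu$ and the bookkeeping index carried from previous steps, which again factors the randomized operator into a scalar $\sum_{j_\mu}\epsilon_{j_\mu}D^\mu_{j_\mu}$ times a randomized sum of the vector data---and that is essentially the tensor-Rademacher argument the paper uses all at once. In short, the paper's choice spares you both the vector-valued Calder\'on--Zygmund reprise of Section \ref{SectionCZOps} and the iteration bookkeeping, at the cost of citing the multi-parameter Khintchine reduction (which it references to Stein's books).
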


The rest of this section is devoted to the proofs
of Theorems \ref{ThmReproduce} and \ref{ThmSquare}.
We begin with the proof of \eqref{EqnSquareLessf}:
\begin{lemma}\label{LemmaSquareLessf}
For every $p$, $1<p<\infty$, we have:
\begin{equation}\label{EqnLemmaSquareLessf}
\LpN{p}{\q(\sum_{j\in \N^\nu}\q|D_j f\w|^2\w)^{\frac{1}{2}}}\leq C_p\LpN{p}{f}.
\end{equation}
The same result holds with $D_j$ replaced with $D_j^{*}$.
\end{lemma}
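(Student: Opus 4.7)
The plan is to randomize the operators $D_j^\mu$ via Khintchine, reduce each parameter direction to the single-parameter Calder\'on--Zygmund result of Theorem \ref{ThmSingleParamSingInt}, and then iterate across the $\nu$ factors in the product $D_j=D_{j_1}^1\cdots D_{j_\nu}^\nu$. The adjoint case will follow by the same steps, since the operator class in Theorem \ref{ThmSingleParamSingInt} is closed under taking $L^2$-adjoints.

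First I would fix $\mu$ and a Rademacher sequence $\q\{r_j\q(\omega\w)\w\}_{j\in \N}$, and consider $T_\omega^\mu:=\sum_{j\in \N} r_j\q(\omega\w)D_j^\mu$. This is precisely of the form treated in Section \ref{SectionCZOps}, with cutoffs $\psi_1=\psi_2=\denum{\psi}{-3}$, trivial amplitude $\kappa\equiv 1$, curves $\gh^\mu$, and kernel $K_\omega\q(t\w)=\sum_j r_j\q(\omega\w)\dil{\phi_{\mu,j}}{2^j}\q(t\w)$. Since $\q|r_j\q(\omega\w)\w|=1$ and $\int\phi_{\mu,j}=0$ for $j>0$, the bumps $\q\{r_j\q(\omega\w)\phi_{\mu,j}\w\}_j\subset C_0^\infty\q(\Q^{q_\mu}\q(a\w)\w)$ are uniformly bounded with the correct vanishing, so $K_\omega\in \sK\q(q_\mu,d^\mu,a,1,1\w)$ uniformly in $\omega$. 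Theorem \ref{ThmSingleParamSingInt} then supplies $\LpN{p}{T_\omega^\mu g}\lesssim \LpN{p}{g}$ uniformly in $\omega$, and a pointwise application of Khintchine yields the scalar single-parameter square function
\[
\LpN{p}{\q(\sum_{j\in \N}\q|D_j^\mu g\w|^2\w)^{1/2}}^p\approx_p \mathbb{E}_\omega \LpN{p}{T_\omega^\mu g}^p\lesssim \LpN{p}{g}^p.
\]

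To iterate across $\mu$ I need the $\ell^2$-valued analogue of this bound. The argument of Sections \ref{SectionCZSpan}--\ref{SectionCZNoSpan} actually exhibits $T_\omega^\mu$ as a scalar Calder\'on--Zygmund operator on the space of homogeneous type furnished by the Carnot--Carath\'eodory balls (after pullback by the charts $\Phi_{x_0}$ from Theorem \ref{ThmMainCCThm} and the leaf-by-leaf reduction of Section \ref{SectionCZNoSpan}), with kernel estimates depending only on admissible constants; the classical vector-valued Calder\'on--Zygmund theory therefore extends $T_\omega^\mu$ to a bounded operator $L^p\q(\ell^2\w)\rightarrow L^p\q(\ell^2\w)$ uniformly in $\omega$. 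Another application of Khintchine then gives the $\ell^2$-valued single-parameter square function
\[
\LpN{p}{\q(\sum_{j\in \N,\, k\in \N}\q|D_j^\mu g_k\w|^2\w)^{1/2}}\lesssim \LpN{p}{\q(\sum_{k\in \N}\q|g_k\w|^2\w)^{1/2}}.
\]
Applying this estimate successively for $\mu=1,2,\ldots,\nu$, with each step's input the $\ell^2$-valued family $\q\{D_{j_{\mu+1}}^{\mu+1}\cdots D_{j_\nu}^\nu f\w\}$, collapses the sum $\sum_{j\in \N^\nu}\q|D_jf\w|^2$ one factor at a time and yields \eqref{EqnLemmaSquareLessf}. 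The same argument with $\q(D_j^\mu\w)^*$ in place of $D_j^\mu$ gives the adjoint statement.

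The main obstacle is the vector-valued extension invoked in the third paragraph: one must verify that after pullback by $\Phi_{x_0}$ and the foliation reduction, the Schwartz kernel of $T_\omega^\mu$ satisfies standard $\ell^2$-valued Calder\'on--Zygmund kernel estimates, uniformly in the base point $x_0$, the leaf in which it sits, and the Rademacher sign pattern $\omega$. This is essentially a matter of bookkeeping inside the proof of Theorem \ref{ThmSingleParamSingInt}, but it is the one place where one must look past the statement of that theorem and into its proof.
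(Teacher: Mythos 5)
Your proposal is correct in outline, but it is more roundabout than the paper's own argument, and the extra step it invokes (vector-valued Calder\'on--Zygmund bounds) is precisely what the paper's approach is structured to avoid.

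The paper's proof exploits the tensor structure $D_{(j_1,\ldots,j_\nu)}=D_{j_1}^1\cdots D_{j_\nu}^\nu$ directly. By the multi-parameter Khintchine inequality (citing Stein), the square-function bound reduces to showing the operator $\sum_j \epsilon_{j_1}^1\cdots\epsilon_{j_\nu}^\nu D_j$ is bounded on $L^p$ uniformly in the $\pm 1$ sequences. This operator \emph{factors} as $\bigl(\sum_{j_1}\epsilon_{j_1}^1 D_{j_1}^1\bigr)\cdots\bigl(\sum_{j_\nu}\epsilon_{j_\nu}^\nu D_{j_\nu}^\nu\bigr)$, a composition of $\nu$ single-parameter operators each of the form covered by Theorem \ref{ThmSingleParamSingInt} \emph{as stated}, with constants uniform in the signs. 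No vector-valued theory is needed, and no step requires looking inside the proof of the single-parameter theorem. Your scheme instead iterates one parameter at a time, which forces you through an $\ell^2$-valued single-parameter square function estimate; as you correctly flag, that requires establishing vector-valued kernel estimates by reopening the proof of Theorem \ref{ThmSingleParamSingInt} (the $\ell^2$-valued extension is indeed routine for a diagonal operator whose scalar kernel satisfies the standard bounds, but it is an extra verification the paper's factorization sidesteps). Both routes are valid; the paper's buys a shorter proof that treats the single-parameter theorem as a black box, while yours buys an $\ell^2$-valued single-parameter estimate as a byproduct, at the cost of additional bookkeeping.

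One small remark: your first paragraph claims the adjoint case will follow because "the operator class in Theorem \ref{ThmSingleParamSingInt} is closed under taking $L^2$-adjoints." The paper's proof handles the $D_j^*$ case the same way (the factored operator $\sum_{j_\mu}\epsilon_{j_\mu}^\mu (D_{j_\mu}^\mu)^*$ is again of the form in Theorem \ref{ThmSingleParamSingInt} after noting that the class is self-adjoint, as remarked in Section \ref{SectionCZOps}), so that part matches.
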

\begin{proof}
As is well known, to prove \eqref{EqnLemmaSquareLessf}, it suffices to prove
for every set of $\nu$ sequences
$\q\{\epsilon_{j_1}^1\w\}_{j_1\in \N},\ldots, \q\{\epsilon_{j_\nu}^\nu\w\}_{j_\nu\in \N}$, taking values of $\pm 1$, the operator
\begin{equation}\label{EqnToShowIIDDbound}
\sum_{j_1,\ldots, j_\nu\in \N} \epsilon_{j_1}^1\epsilon_{j_2}^2\cdots \epsilon_{j_\nu}^\nu D_{\q(j_1,\ldots, j_\nu\w)}
\end{equation}
is bounded on $L^p$, with bound independent of the choice of the sequence.
To see why \eqref{EqnToShowIIDDbound} is enough, see for example,
page 267 of \cite{SteinHarmonicAnalysis} and Chapter 4,
Section 5 of \cite{SteinSingularIntegrals}.
We have,
\begin{equation*}
\sum_{j_1,\ldots, j_\nu\in \N} \epsilon_{j_1}^1\epsilon_{j_2}^2\cdots \epsilon_{j_\nu}^\nu D_{\q(j_1,\ldots, j_\nu\w)} = \q(\sum_{j_1\in \N} \epsilon_{j_1}^1 D_{j_1}^1\w)\cdots \q(\sum_{j_\nu\in \N} \epsilon_{j_\nu}^{\nu} D_{j_\nu}^{\nu}\w).
\end{equation*}

For each $\mu$,
\begin{equation*}
\sum_{j_\mu} \epsilon_{j_\mu}^{\mu} D_{j_\mu}^{\mu}
\end{equation*}
is of the form covered by Theorem \ref{ThmSingleParamSingInt} and
hence bounded on $L^p$ ($1<p<\infty$).  It is easy to see that the
Theorem \ref{ThmSingleParamSingInt} holds uniformly in the
choice of the sequence $\q\{\epsilon_{j_{\mu}}^{\mu}\w\}$ taking
values of $\pm 1$.  The result now follows.

The same proof works with $D_j$ replaced by $D_j^{*}$.
\end{proof}

To prove Theorem \ref{ThmReproduce}, we first state a preliminary lemma.
\begin{lemma}\label{LemmaRMLimit}
For $p_0$ fixed, $1<p_0<\infty$,
\begin{equation*}
\lim_{M\rightarrow \infty} \LpOpN{p_0}{R_M} =0.
\end{equation*}
\end{lemma}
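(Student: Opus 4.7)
The plan is to prove the decay by interpolation: combine an exponential-in-$M$ bound on $\LpOpN{2}{R_M}$ with a polynomial-in-$M$ bound on $\LpOpN{p_1}{R_M}$ for a chosen $p_1$ so that $p_0$ lies between $2$ and $p_1$. Riesz--Thorin then forces $\LpOpN{p_0}{R_M}\to 0$.

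First I would establish the $L^2$ decay. Writing $R_M=\sum_{|l|>M}S_l$ with $S_l=\sum_{j\in \N^\nu} D_j D_{j+l}$ (where $D_{j+l}=0$ unless $j+l\in \N^\nu$), I would apply the Cotlar--Stein lemma to $\{D_jD_{j+l}\}_j$ with $l$ held fixed. The fourth and fifth bounds in Theorem \ref{ThmL2Thm} yield
\begin{equation*}
\LpOpN{2}{(D_jD_{j+l})^*(D_{j'}D_{j'+l})}, \ \LpOpN{2}{(D_jD_{j+l})(D_{j'}D_{j'+l})^*}\lesssim 2^{-\epsilon_2 \diam{j,j+l,j',j'+l}}\leq 2^{-\epsilon_2(|l|\vee|j-j'|)}.
\end{equation*}
Splitting the sum $\sum_{j'}2^{-\epsilon_2(|l|\vee |j-j'|)/2}$ into the regions $|j-j'|\le|l|$ and $|j-j'|>|l|$ gives a bound $\lesssim (1+|l|)^\nu 2^{-\epsilon_2|l|/2}$, so Cotlar--Stein produces $\LpOpN{2}{S_l}\lesssim (1+|l|)^\nu 2^{-\epsilon_2|l|/2}$, and summing over $|l|>M$ gives
\begin{equation*}
\LpOpN{2}{R_M}\lesssim (1+M)^{c\nu} 2^{-\epsilon_2 M/4}.
\end{equation*}

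Next I would establish a polynomial $L^{p_1}$ bound for any $p_1\in(1,\infty)$. Duality applied to the square function estimate for $D_j^*$ from Lemma \ref{LemmaSquareLessf} gives $\|\sum_j D_j g_j\|_{L^{p_1}}\lesssim \|(\sum_j|g_j|^2)^{1/2}\|_{L^{p_1}}$. Since $\denum{\psi}{-3}^{4\nu}=U_M+R_M$, it suffices to bound $U_M=\sum_{j\in\N^\nu}D_j h_j^M$ with $h_j^M=\sum_{|l|\leq M}D_{j+l}f$. Cauchy--Schwarz on the $O(M^\nu)$-term sum gives $|h_j^M|^2\lesssim M^\nu\sum_{|l|\le M}|D_{j+l}f|^2$, and interchanging the order of summation yields $\sum_j|h_j^M|^2\lesssim M^{2\nu}\sum_i|D_if|^2$. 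Applying the dual square function and then Lemma \ref{LemmaSquareLessf} itself,
\begin{equation*}
\LpN{p_1}{U_M f}\lesssim M^\nu\LpN{p_1}{\q(\sum_i|D_if|^2\w)^{1/2}}\lesssim M^\nu\LpN{p_1}{f},
\end{equation*}
hence $\LpOpN{p_1}{R_M}\lesssim 1+M^\nu$. Choosing $p_1$ so that $p_0$ is strictly between $2$ and $p_1$, Riesz--Thorin gives $\LpOpN{p_0}{R_M}\lesssim \LpOpN{2}{R_M}^{1-\theta}\LpOpN{p_1}{R_M}^\theta$, which tends to $0$ as $M\to\infty$.

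The main obstacle is the $L^2$ step. Merely knowing the pairwise bound $\LpOpN{2}{D_jD_{j+l}}\lesssim 2^{-\epsilon_2|l|/2}$ is not enough, since the sum $\sum_j D_jD_{j+l}$ must be controlled uniformly with decay in $|l|$; one must leverage the full $\diam$ estimate from Theorem \ref{ThmL2Thm} inside Cotlar--Stein so that the ``bookkeeping'' sum over $j'$ converges and still carries a factor $2^{-\epsilon_2|l|/2}$. Everything else is routine once the $L^2$ decay and the polynomial $L^{p_1}$ growth are in hand.
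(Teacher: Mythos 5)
Your proposal is correct and follows essentially the same route as the paper: interpolate between an exponential-in-$M$ bound for $\LpOpN{2}{R_M}$ obtained from Cotlar--Stein together with the fourth and fifth estimates of Theorem \ref{ThmL2Thm}, and a polynomial-in-$M$ bound for $\LpOpN{p_1}{R_M}$ deduced from the square function estimate of Lemma \ref{LemmaSquareLessf}. The only (harmless) technical difference is that you apply Cotlar--Stein with $l$ frozen and then sum the resulting bounds over $|l|>M$, whereas the paper applies Cotlar--Stein once to the full doubly-indexed family $\{D_jD_{j+l}\}_{j\in\N^\nu,\ |l|>M}$; likewise your $L^{p_1}$ step is phrased via the dual square function estimate rather than the direct pairing $\q<g,U_M f\w>$, but these amount to the same computation.
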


\begin{proof}[Proof of Theorem \ref{ThmReproduce} given Lemma \ref{LemmaRMLimit}]
Note that,
\begin{equation*}
U_M = \denum{\psi}{-3}^{4\nu} - R_M.
\end{equation*}
Take $\psi\in C_0^\infty$ with $\denum{\psi}{-2}\prec \psi\prec \denum{\psi}{-3}$.
Take $M=M\q( p_0\w)$ so large that $\LpOpN{p_0}{R_M \psi}<1$.
Define,
\begin{equation*}
V_M = \sum_{m=0}^\infty \psi\q(R_M \psi\w)^m,
\end{equation*}
with convergence in the uniform operator topology $L^{p_0}\rightarrow L^{p_0}$.
It is direct to verify that $V_M$ satisfies the conclusions of
Theorem \ref{ThmReproduce}.
\end{proof}

Lemma \ref{LemmaRMLimit} follows by interpolating the next two lemmas,
\begin{lemma}\label{LemmaBadLpRM}
For $1<p<\infty$, $M\geq 1$,
\begin{equation*}
\LpOpN{p}{R_M}\leq C_p M^{\nu}.
\end{equation*}
\end{lemma}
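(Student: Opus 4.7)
The plan is to reduce the bound on $R_M$ to a bound on $U_M$, and to exploit the pointwise Cauchy--Schwarz together with the vector-valued estimates of Lemma \ref{LemmaSquareLessf}. Since $U_M+R_M=\denum{\psi}{-3}^{4\nu}$ and multiplication by $\denum{\psi}{-3}^{4\nu}$ is trivially bounded on every $L^p$, it suffices to prove
\[
\LpOpN{p}{U_M}\leq C_p M^\nu.
\]

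Next, I would reorganize $U_M$ by the offset $l$. For each fixed $l\in\Z^\nu$ set
\[
T_l := \sum_{j\in\N^\nu} D_j D_{j+l},
\]
with the convention $D_k=0$ for $k\notin\N^\nu$, so that $U_M=\sum_{|l|\leq M}T_l$. Because the number of lattice points $l\in\Z^\nu$ with $|l|\leq M$ is $\leq C M^\nu$, it is enough to show that $\LpOpN{p}{T_l}\leq C_p$ \emph{uniformly in $l$}. This is the step where the square function does all the work.

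To bound $T_l$ uniformly, I would test against $g\in L^{p'}$ and use duality:
\[
\ip{T_l f}{g}=\sum_{j\in\N^\nu}\ip{D_{j+l}f}{D_j^{*}g}.
\]
Applying the pointwise Cauchy--Schwarz inequality in $j$ and then H\"older's inequality in $x$ gives
\[
|\ip{T_l f}{g}|\leq \LpN{p}{\q(\sum_{j\in\N^\nu}\q|D_{j+l}f\w|^2\w)^{1/2}}\,\LpN{p'}{\q(\sum_{j\in\N^\nu}\q|D_j^{*}g\w|^2\w)^{1/2}}.
\]
After the harmless index shift $\sum_{j}|D_{j+l}f|^2\leq \sum_{k}|D_k f|^2$, the first factor is controlled by $C_p\LpN{p}{f}$ via \eqref{EqnLemmaSquareLessf}, while the second factor is controlled by $C_{p'}\LpN{p'}{g}$ via the same lemma applied to the adjoint family $D_j^{*}$ (the statement of Lemma \ref{LemmaSquareLessf} explicitly covers both $D_j$ and $D_j^{*}$). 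Hence $\LpOpN{p}{T_l}\leq C_p$ independently of $l$, and summing $M^\nu$-many such $T_l$ yields the claimed bound on $U_M$, and thus on $R_M$.

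There is no real obstacle here: the only nontrivial ingredient is the vector-valued square function estimate of Lemma \ref{LemmaSquareLessf}, which itself was reduced (via Rademacher randomization) to the single-parameter result of Theorem \ref{ThmSingleParamSingInt}. The whole point of Lemma \ref{LemmaBadLpRM} is that this crude estimate, which loses a factor $M^\nu$, will later be interpolated against a geometric decay estimate (the next lemma, based on the almost-orthogonality bounds of Theorem \ref{ThmL2Thm}) to give $\LpOpN{p}{R_M}\to 0$ as $M\to\infty$.
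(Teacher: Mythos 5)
Your proof is correct and follows essentially the same route as the paper: reduce $R_M$ to $U_M$, dualize, split over $j$ and $l$, apply Cauchy--Schwarz in $j$ plus H\"older, and control both factors by Lemma \ref{LemmaSquareLessf}. The only superficial difference is that you bundle the argument as a uniform bound on each $T_l$ before summing $|l|\leq M$, whereas the paper sums over $l$ from the start; the content is identical.
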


\begin{lemma}\label{LemmaGoodL2RM}
\begin{equation*}
\LpOpN{2}{R_M}\lesssim 2^{-\epsilon M},
\end{equation*}
for some $\epsilon>0$.
\end{lemma}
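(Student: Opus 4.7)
The plan is to obtain $\LpOpN{2}{R_M} \lesssim 2^{-\epsilon M}$ by applying the Cotlar--Stein almost-orthogonality lemma to the family $\{T_{j,l} := D_j D_{j+l}\}$ indexed by $(j,l) \in \N^\nu \times \Z^\nu$ with $|l| > M$, adopting the convention (already made in the paper) that $D_k = 0$ for $k \notin \N^\nu$, so that
\begin{equation*}
R_M = \sum_{\substack{j \in \N^\nu \\ |l| > M}} T_{j,l}.
\end{equation*}
The almost-orthogonality estimates required by Cotlar--Stein are precisely the four-factor bounds of Theorem \ref{ThmL2Thm}, so essentially no new $L^2$ input is needed beyond that theorem.

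First I would compute the two relevant products,
\begin{equation*}
T_{j,l}^* T_{j',l'} = D_{j+l}^* D_j^* D_{j'} D_{j'+l'}, \qquad T_{j,l} T_{j',l'}^* = D_j D_{j+l} D_{j'+l'}^* D_{j'}^*,
\end{equation*}
and apply the third and fourth bullets of Theorem \ref{ThmL2Thm} respectively, obtaining
\begin{equation*}
\LpOpN{2}{T_{j,l}^* T_{j',l'}},\ \LpOpN{2}{T_{j,l} T_{j',l'}^*} \;\lesssim\; 2^{-\epsilon_2\, d(j,l;j',l')}, \quad d(j,l;j',l') := \diam{j,\,j+l,\,j',\,j'+l'}.
\end{equation*}
Since $d \geq |(j+l)-j| = |l|$, $d \geq |l'|$, and $d \geq |j-j'|$, for fixed $(j,l)$ with $|l| > M$ the number of pairs $(j',l')$ with $d(j,l;j',l') \leq R$ is bounded by $C R^{2\nu}$ (choose $j'$ from a box of radius $R$ around $j$, and $l'$ from a box of radius $R$ around the origin).

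Summing in $R$, starting at $R = |l|$,
\begin{equation*}
\sum_{\substack{j' \in \N^\nu \\ |l'| > M}} \LpOpN{2}{T_{j,l}^* T_{j',l'}}^{1/2} \;\lesssim\; \sum_{R \geq |l|} R^{2\nu}\, 2^{-(\epsilon_2/2) R} \;\lesssim\; |l|^{2\nu}\, 2^{-(\epsilon_2/2)|l|} \;\lesssim\; 2^{-\epsilon|l|} \;\leq\; 2^{-\epsilon M},
\end{equation*}
for any $\epsilon < \epsilon_2/2$ (e.g.\ $\epsilon = \epsilon_2/4$), absorbing the polynomial factor into a slightly smaller exponent. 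The identical bound for $\LpOpN{2}{T_{j,l} T_{j',l'}^*}^{1/2}$ follows by symmetry. The Cotlar--Stein lemma then gives $\LpOpN{2}{R_M} \lesssim 2^{-\epsilon M}$, which is the desired conclusion. There is no serious obstacle: all the analytic content is packaged in Theorem \ref{ThmL2Thm}, and the remaining work is the elementary counting argument above together with polynomial-versus-exponential absorption.
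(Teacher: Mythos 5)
Your proof is correct and follows essentially the same route as the paper: index the family by $(j,l)$ with $|l|>M$, invoke the third and fourth bullets of Theorem \ref{ThmL2Thm} to bound $\LpOpN{2}{T_{j,l}^{*}T_{j',l'}}$ and $\LpOpN{2}{T_{j,l}T_{j',l'}^{*}}$ by $2^{-\epsilon_2\diam{j,j+l,j',j'+l'}}$, and apply Cotlar--Stein. The only difference is that you spell out the counting-and-summation step that shows $\sup_{(j,l)}\sum_{(j',l')}2^{-\epsilon_2 d/2}\lesssim 2^{-\epsilon M}$, which the paper states without further detail; that computation is correct.
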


\begin{proof}[Proof of Lemma \ref{LemmaBadLpRM}]
Since $R_M = \denum{\psi}{-3}^{4\nu}-U_M$, it suffices to prove
the result with $U_M$ in place of $R_M$.
Let $q$ be dual to $p$.  Fix $f\in L^p$ and $g\in L^q$
with $\LpN{q}{g}=1$.  Consider,
\begin{equation*}
\begin{split}
\q|\ip{g}{U_M f}\w| &\leq \sum_{\q|l\w|\leq M} \q|\ip{g}{\sum_{j\in \N^\nu} D_j D_{j+l} f}\w|\\
&= \sum_{\q|l\w|\leq M} \q| \sum_{j\in \N^\nu} \ip{D_j^{*} g}{ D_{j+l} f}\w|\\
&\leq \sum_{\q|l\w|\leq M} \int \q(\sum_{j\in \N^\nu} \q|D_j^{*} g\w|^2\w)^{1/2} \q(\sum_{j\in \N^{\nu}} \q|D_j f\w|^2\w)^{1/2}\\
&\lesssim M^\nu \LpN{q}{\q(\sum_{j\in\N^\nu} \q|D_j^{*}g\w|^2\w)^{1/2} }
\LpN{p}{\q(\sum_{j\in \N^\nu} \q|D_j f\w|^2\w)^{1/2}}\\
&\lesssim M^{\nu} \LpN{q}{g} \LpN{p}{f}\\
&= M^{\nu} \LpN{p}{f};
\end{split}
\end{equation*}
where, in the second to last line, we have applied Lemma \ref{LemmaSquareLessf}
twice.  Taking the supremum over all $g$ with $\LpN{q}{g}=1$
yields the result.
\end{proof}

\begin{proof}[Proof of Lemma \ref{LemmaGoodL2RM}]
We wish to apply the Cotlar-Stein lemma to,
\begin{equation*}
\sum_{\substack{j\in \N^\nu\\ \q|l\w|>M}} D_j D_{j+l}.
\end{equation*}

Applying Theorem \ref{ThmL2Thm}, we have,
\begin{equation*}
\LpOpN{2}{D_{j_1} D_{j_1+l_1} D_{j_2}^{*} D_{j_2+l_2}^{*}}, \LpOpN{2}{D_{j_1}^{*} D_{j_1+l_1}^{*} D_{j_2} D_{j_2+l_2}}\lesssim 2^{-\epsilon_2 \diam{j_1,j_1+l_1, j_2,j_2+l_2}}.
\end{equation*}

The Cotlar-Stein lemma states,
\begin{equation*}
\LpOpN{2}{R_M}\lesssim \sup_{\substack{j_1\in \N \\ \q|l_1\w|>M}} \sum_{\substack{ j_2\in \N \\ \q|l_2\w|>M}} 2^{-\epsilon_2\diam{j_1,j_1+l_1, j_2, j_2+l_2}/2} \lesssim 2^{-\epsilon M};
\end{equation*}
completing the proof.
\end{proof}

We have now completed the proof of Theorem \ref{ThmReproduce}.
We end this section with the completion of the proof of
Theorem \ref{ThmSquare}, by proving \eqref{EqnfLessSquare}.

\begin{proof}[Proof of \eqref{EqnfLessSquare}]
Fix $p_0$, $1<p_0<\infty$, and let $M=M\q( p_0\w)$ be as
in Theorem \ref{ThmReproduce}.
Let $q_0$ be dual to $p_0$, so that $V_M^{*}:L^{q_0}\rightarrow L^{q_0}$.
Let $g\in L^{q_0}$ be such that $\LpN{q_0}{g}=1$.
We have for $f\in L^{p_0}$,
\begin{equation*}
\begin{split}
\q|\ip{g}{\denum{\psi}{-2} f}\w| &= \q|\ip{V_M^{*} g}{U_M \denum{\psi}{-2} f}\w|\\
&\leq \sum_{\q|l\w|\leq M} \q| \sum_{j\in \N^\nu} \ip{D_j^{*} V_M^{*} g}{ D_{j+l} \denum{\psi}{-2} f}\w|\\
&\leq \sum_{\q|l\w|\leq M} \LpN{q_0}{\q(\sum_{j\in \N^{\nu}} \q|D_j^{*} V_M^{*} g\w|^2\w)^{1/2} }
\LpN{p_0}{\q( \sum_{j\in \N^{\nu}} \q|D_j \denum{\psi}{-2} f\w|^2  \w)^{1/2} }\\
&\lesssim M^\nu \LpN{q_0}{V_M^{*} g}
\LpN{p_0}{\q( \sum_{j\in \N^{\nu}} \q|D_j \denum{\psi}{-2} f\w|^2  \w)^{1/2} }\\
&\lesssim 
\LpN{p_0}{\q( \sum_{j\in \N^{\nu}} \q|D_j \denum{\psi}{-2} f\w|^2  \w)^{1/2} };
\end{split}
\end{equation*}
where in the second to last line we applied Lemma \ref{LemmaSquareLessf},
and in the last line we used that $M$ is fixed (since $p_0$ is),
$V_M^{*}$ is bounded on $L^{q_0}$, and $\LpN{q_0}{g}=1$.
Taking the supremum over all $g$ with $\LpN{q_0}{g}=1$ yields the result.
\end{proof}

\section{The maximal result (Theorem \ref{ThmMainMaxThm})}
In this section, we prove Theorem \ref{ThmMainMaxThm}.  The proof
proceeds by a bootstrapping argument.
In fact, there are at least two, well-known,
bootstrapping arguments that can be used to prove results like
Theorem \ref{ThmMainMaxThm}.  One can be found in
\cite{NagelSteinWaingerDifferentiationInLacunaryDirections},
another in Section 4 of \cite{GreenleafSeegerWaingerOnXrayTransformsForRigidLineComplexes}.
Either of these arguments will suffice for our purposes.
We proceed using the methods of
\cite{NagelSteinWaingerDifferentiationInLacunaryDirections}.

Let us review a few of the reductions covered in
Section \ref{SectionAuxOps}.  First, for $E\subseteq \nuset$,
define,
\begin{equation*}
\sM_E f\q(x\w) = \sup_{j\in \N^\nu} M_{j_E} \q|f\w| \q( x\w).
\end{equation*}
Then, to prove Theorem \ref{ThmMainMaxThm} it suffices to
prove that $\sM_{\nuset}$ is bounded on $L^p$ ($1<p\leq \infty$).
We proceed by induction on $\nu$.  As discussed
in Section \ref{SectionAuxOps}, the base case ($\nu=0$)
is trivial, and we may assume by our inductive hypothesis
that $\sM_E$ is bounded on $L^p$ for $E\subsetneq \nuset$.
Note that $\sM_{\nuset}$ is clearly bounded on $L^\infty$,
and so our goal is to show that it is bounded on $L^p$, $1<p\leq 2$.

The following lemma was proved in Section \ref{SectionAuxOps} (under
our inductive hypothesis),
\begin{lemma}\label{LemmaEquivBandsM}
For each $p$, $1<p<\infty$,
\begin{equation*}
\LpN{p}{\sM_{\nuset} f}\lesssim \LpN{p}{f}, \quad \forall f\in L^p,
\end{equation*}
if and only if
\begin{equation*}
\LpN{p}{\sup_{j\in \N^{\nu}}\q|B_j f\w|}\lesssim\LpN{p}{f}, \quad \forall f\in L^p.
\end{equation*}
\end{lemma}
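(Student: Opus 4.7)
The plan is to prove the equivalence by an algebraic rearrangement of the definition of $B_j$: isolate the $E=\nuset$ term (which, up to multiplication by a nonzero constant, is exactly $M_j$) and handle the remaining $E\subsetneq \nuset$ terms via the inductive hypothesis.

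First I would split
\begin{equation*}
B_j = \q(-1\w)^\nu A_{j_\emptyset} M_{j_{\nuset}} + \sum_{E\subsetneq \nuset}\q(-1\w)^{\q|E\w|} A_{j_{E^c}} M_{j_E}.
\end{equation*}
Since $A_{j_\emptyset} = \q[\int \sigma\w]^\nu\,\denum{\psi}{-1}^{2\nu}$ is multiplication by a smooth function, and $\denum{\psi}{0}\prec \denum{\psi}{-1}$ gives $\denum{\psi}{-1}M_j f = M_j f$, the top term equals $\q(-1\w)^\nu \lambda\, M_j f$ with $\lambda = \q[\int \sigma\w]^\nu>0$. Solving yields the pointwise identity
\begin{equation*}
M_j f = \q(-1\w)^\nu \lambda^{-1}\q(B_j f - \sum_{E\subsetneq \nuset}\q(-1\w)^{\q|E\w|} A_{j_{E^c}} M_{j_E}f\w),
\end{equation*}
together with its symmetric reverse
\begin{equation*}
B_j f = \q(-1\w)^\nu \lambda\, M_j f + \sum_{E\subsetneq \nuset}\q(-1\w)^{\q|E\w|} A_{j_{E^c}} M_{j_E}f.
\end{equation*}

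The central technical step is the pointwise bound
\begin{equation*}
\sup_{j\in\N^\nu}\q|A_{j_{E^c}} M_{j_E} f\w|\q(x\w) \lesssim \q(\prod_{\mu\in E^c}\sM^\mu\w)\sM_E \q|f\w|\q(x\w),\qquad E\subsetneq \nuset.
\end{equation*}
The essential ingredient here is positivity: since $\sigma\geq 0$ and the cutoffs $\denum{\psi}{-1}$ are non-negative, each $A_{j_\mu}^\mu$ is a positive integral operator, so $\q|A_{j_\mu}^\mu g\w|\leq A_{j_\mu}^\mu\q|g\w|$ and the composition $A_{j_{E^c}}$ is monotone on non-negative inputs. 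Because $\q|M_{j_E} f\w|$ depends only on the $E$-coordinates of $j$, I can iteratively pull the suprema $\sup_{j_\mu}$ (for $\mu\in E^c$) inside the composition, and then apply \eqref{EqnSupAjBound}, namely $\sup_{j_\mu\in\Ninf} A_{j_\mu}^\mu h \lesssim \sM^\mu h$ for $h\geq 0$, once per factor. Taking $L^p$ norms, Theorem \ref{ThmSingleParamMax} handles each $\sM^\mu$ (for $1<p\leq\infty$) and the inductive hypothesis handles $\sM_E$ for $E\subsetneq \nuset$, giving $\LpN{p}{\sup_j \q|A_{j_{E^c}} M_{j_E}f\w|}\lesssim \LpN{p}{f}$.

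Combining these ingredients delivers both directions. The forward implication ($\sup_j\q|B_j f\w|$ bounded $\Rightarrow \sM_{\nuset}$ bounded) follows by applying the first identity to $\q|f\w|$, taking $L^p$ norms, and invoking the assumed bound on $\sup_j\q|B_j g\w|$ for the main term and the pointwise estimate above for the error. The reverse implication follows from the second identity: the $M_j$ term is controlled by the assumed boundedness of $\sM_{\nuset}$, and the remaining terms by the same pointwise estimate. The only delicate point is the positivity-based manipulation that justifies pulling $\sup_j$ through the composition $A_{j_{E^c}}$; once this is in hand, the rest is bookkeeping using previously proved results.
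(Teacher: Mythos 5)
Your proposal is correct and matches the paper's approach: the paper establishes the ``if'' direction en route to Proposition \ref{PropDiscMaxBound} by isolating the $E=\nuset$ term $A_{j_\emptyset}M_{j_{\nuset}}=\lambda M_j$ in the alternating sum defining $B_j$ and controlling the $E\subsetneq\nuset$ terms via \eqref{EqnInductHypWithA} (which combines \eqref{EqnSupAjBound} with the inductive hypothesis), and leaves the ``only if'' direction to the reader as the immediate reverse rearrangement. Your explicit positivity-based justification for pushing $\sup_j$ through the composition $A_{j_{E^c}}$ simply makes rigorous a step the paper treats as routine.
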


\begin{rmk}
Actually, only the if part of Lemma \ref{LemmaEquivBandsM} was shown
in Section \ref{SectionAuxOps}.  The only if part is immediate,
and is also not used in what follows.  We, therefore, leave it
to the interested reader.
\end{rmk}

In what follows, $D_j$ for $j\in \Z^{\nu}\setminus \N^\nu$ is defined to be $0$.
For $k\in \Z^{\nu}$ define a new operator acting on sequences
of measurable functions $\q\{f_j\q(x\w)\w\}_{j\in \N^\nu}$ by,
\begin{equation*}
\sB_k \q\{f_j\w\}_{j\in \N^{\nu}} = \q\{B_j D_{j+k} f_j\w\}_{j\in \N^{\nu}}.
\end{equation*}

\begin{prop}\label{PropSTSsBk}
Fix $p_0$, $1<p_0<\infty$.  If there is $\epsilon>0$ such that
\begin{equation*}
\LplqOpN{p_0}{2}{\sB_k}\lesssim 2^{-\epsilon\q|k\w|},
\end{equation*}
then $\sM_{\nuset}$ is bounded on $L^{p_0}$.
\end{prop}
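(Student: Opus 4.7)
The plan is to use the Calder\'on-type reproducing formula (Theorem \ref{ThmReproduce}) together with the Littlewood-Paley square function estimate (Theorem \ref{ThmSquare}) to deduce the maximal bound from the hypothesis on $\sB_k$. By Lemma \ref{LemmaEquivBandsM} it suffices to prove
\begin{equation*}
\LpN{p_0}{\sup_{j\in \N^\nu} \q|B_j f\w|} \lesssim \LpN{p_0}{f},
\end{equation*}
and since $\sup_j \q|a_j\w| \leq \q(\sum_j \q|a_j\w|^2\w)^{1/2}$, it is enough to prove the stronger vector-valued bound $\LplqN{p_0}{2}{\q\{B_j f\w\}_{j\in \N^\nu}} \lesssim \LpN{p_0}{f}$.

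First I would exploit the nesting $\denum{\psi}{0}\prec \denum{\psi}{-1}\prec \denum{\psi}{-2}$ to observe that $B_j h = B_j\q(\denum{\psi}{-2} h\w)$ for every $h$: the function $h$ enters $B_j h$ only through values of the form $h\q(\gamma_{2^{-j_E} t}\q(x\w)\w)\denum{\psi}{0}\q(\gamma_{2^{-j_E} t}\q(x\w)\w)$ with $x\in\mathrm{supp}\,\denum{\psi}{-1}$, and on this set $\denum{\psi}{-2}\equiv 1$. Choosing $M = M\q(p_0\w)$ as in Theorem \ref{ThmReproduce} and invoking the identity $\denum{\psi}{-2} = \denum{\psi}{-2} U_M V_M$, I would then write
\begin{equation*}
B_j f \;=\; B_j \denum{\psi}{-2} f \;=\; B_j \denum{\psi}{-2} U_M V_M f \;=\; \sum_{\q|l\w|\leq M}\,\sum_{i\in \N^\nu} B_j D_i D_{i+l} V_M f,
\end{equation*}
where the cutoff $\denum{\psi}{-2}$ that sits between $B_j$ and $U_M$ is reabsorbed into $B_j$ by the same support argument.

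The crucial step is the reindexing $i = j+k$ with $k\in \Z^\nu$ (using the convention $D_m = 0$ for $m\notin\N^\nu$), which recasts the decomposition as
\begin{equation*}
\q\{B_j f\w\}_{j\in\N^\nu} \;=\; \sum_{\q|l\w|\leq M}\,\sum_{k\in \Z^\nu} \sB_k\q(\q\{D_{j+k+l} V_M f\w\}_{j\in\N^\nu}\w).
\end{equation*}
Applying the triangle inequality in $L^{p_0}\q(\ell^2\q(\N^\nu\w)\w)$, the assumed bound $\LplqOpN{p_0}{2}{\sB_k}\lesssim 2^{-\epsilon\q|k\w|}$, a reindexing $j'=j+k+l$, the square function estimate \eqref{EqnSquareLessf}, and the $L^{p_0}$-boundedness of $V_M$, I obtain
\begin{equation*}
\LplqN{p_0}{2}{\q\{B_j f\w\}} \;\lesssim\; \sum_{\q|l\w|\leq M}\sum_{k\in \Z^\nu} 2^{-\epsilon\q|k\w|}\LpN{p_0}{\q(\sum_{j'\in\N^\nu}\q|D_{j'} V_M f\w|^2\w)^{1/2}}
\;\lesssim\; M^\nu\LpN{p_0}{f},
\end{equation*}
which suffices since $M$ is fixed once $p_0$ is.

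The main obstacle is really just the bookkeeping in the second paragraph: one must be careful that $\denum{\psi}{-2}$ can be inserted and removed freely at each place where a cutoff must be matched to make the reproducing formula compose with $B_j$, and that the triangle inequality in $L^{p_0}\q(\ell^2\w)$ is legitimately applied to a sum indexed by $\q\{\q|l\w|\leq M\w\}\times \Z^\nu$. Everything else is supplied by the hypothesis, Theorem \ref{ThmReproduce}, and Theorem \ref{ThmSquare}; the geometric decay in $k$ supplied by $\sB_k$ is exactly what is needed to offset the lack of any direct cancellation in the $B_j$'s themselves.
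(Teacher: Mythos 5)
Your proof is correct and follows essentially the same route as the paper's: reduce via Lemma \ref{LemmaEquivBandsM} and the pointwise bound $\sup\leq\ell^2$, insert $\denum{\psi}{-2}$ so the reproducing formula $\denum{\psi}{-2}U_MV_M=\denum{\psi}{-2}$ applies, reindex to expose $\sB_k$, and then close with the hypothesis, Theorem \ref{ThmSquare}, and the $L^{p_0}$-boundedness of $V_M$. The one small imprecision is your support argument for $B_j=B_j\denum{\psi}{-2}$: the relevant cutoff is the inner $\denum{\psi}{0}$ appearing in $M_{j_E}$ (so $f$ is only evaluated where $\denum{\psi}{0}\neq 0$, hence where $\denum{\psi}{-2}\equiv 1$), not $\denum{\psi}{-1}$; the conclusion is unaffected.
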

\begin{proof}
In light of Lemma \ref{LemmaEquivBandsM} and because
\begin{equation*}
\LpN{p_0}{\sup_{j\in \N^{\nu}} \q|B_j f\w| } \leq \LpN{p_0}{ \q(\sum_{j\in \N^{\nu}} \q|B_j f\w|^2\w)^{1/2}},
\end{equation*}
it suffices to show
\begin{equation*}
\LpN{p_0}{ \q(\sum_{j\in \N^{\nu}} \q|B_j f\w|^2\w)^{1/2}}\lesssim \LpN{p_0}{f}.
\end{equation*}
Fix $M=M\q( p_0\w)$ as in Theorem \ref{ThmReproduce}.  Note,
$B_j= B_j \denum{\psi}{-2} = B_j \denum{\psi}{-2} U_M V_M= B_j U_M V_M$.
Let $g=V_M f$.  Note that $\LpN{p_0}{g}\lesssim \LpN{p_0}{f}$.
Thus, it suffices to show,
\begin{equation*}
\LpN{p_0}{\q(\sum_{j\in \N^{\nu}} \q|B_j U_M g\w|^2\w)^{1/2} }\lesssim \LpN{p_0}{g}.
\end{equation*}
Consider, using the triangle inequality,
\begin{equation*}
\begin{split}
\LpN{p_0}{\q(\sum_{j\in \N^{\nu}} \q|B_j U_M g\w|^2\w)^{1/2}} & = \LpN{p_0}{\q(  \sum_{j\in \N^{\nu}} \q|\sum_{\substack{j_0\in \N^{\nu} \\ \q|l\w|\leq M } } B_j D_{j_0} D_{j_0+l} g\w|^2\w)^{1/2} }\\
& = \LpN{p_0}{\q(  \sum_{j\in \N^{\nu}} \q|\sum_{\substack{k\in \Z^{\nu} \\ \q|l\w|\leq M } } B_j D_{j+k} D_{j+k+l} g\w|^2\w)^{1/2} }\\
& \leq \sum_{\substack{k\in \Z^{\nu}\\ \q|l\w|\leq M } }\LpN{p_0}{\q(  \sum_{j\in \N^{\nu}} \q| B_j D_{j+k} D_{j+k+l} g\w|^2\w)^{1/2} }\\
& = \sum_{\substack{k\in \Z^{\nu}\\ \q|l\w|\leq M } } \LplqN{p_0}{2}{\sB_k \q\{D_{j+k+l} g\w\}_{j\in \N^{\nu}} }\\
& \lesssim  \sum_{\substack{k\in \Z^{\nu}\\ \q|l\w|\leq M } }  2^{-\epsilon \q|k\w|} \LplqN{p_0}{2}{\q\{D_j g\w\}_{j\in \N^{\nu} }}\\
&\lesssim \LpN{p_0}{\q(\sum_{j\in \N^{\nu} } \q|D_j g\w|^2\w)^{1/2} }\\
&\lesssim \LpN{p_0}{g},
\end{split}
\end{equation*}
where, in the last line, we have applied Theorem \ref{ThmSquare}.
This completes the proof of the proposition.
\end{proof}

Define,
\begin{equation*}
\sP=\q\{p\in \q(1,2\w]: \exists \epsilon>0, \LplqOpN{p}{2}{\sB_k}\lesssim 2^{-\epsilon \q|k\w|}\w\}.
\end{equation*}
In light of Proposition \ref{PropSTSsBk}, the $L^p$ boundedness
of $\sM_{\nuset}$ will follow directly from
the following proposition.

\begin{prop}\label{PropsPEqual}
$\sP=\q(1,2\w]$.
\end{prop}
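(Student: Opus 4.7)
I would prove $\sP = (1,2]$ by a bootstrap, starting from the $L^2$ estimate already in hand and iterating down toward $p=1$. The three key ingredients are Theorem \ref{ThmL2Thm} (controlling $\sB_k$ on $L^2(\ell^2)$), Proposition \ref{PropSTSsBk} (turning a vector-valued decay estimate on $L^{p}(\ell^2)$ into a scalar maximal bound on $L^{p}$), and the inductive hypothesis that $\sM_E$ is bounded on $L^q$ for every $E\subsetneq\nuset$ and every $1<q\leq\infty$.

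First I would dispatch the base case $2\in\sP$. Since $\sB_k$ acts diagonally on sequences,
\begin{equation*}
\LplqN{2}{2}{\sB_k\q\{f_j\w\}}^2 = \sum_{j\in\N^\nu}\LpN{2}{B_j D_{j+k} f_j}^2,
\end{equation*}
so Theorem \ref{ThmL2Thm} applied with $j_1=j$, $j_2=j+k$ (for which $\diam{j_1,j_2}=|k|$) immediately gives $\LplqOpN{2}{2}{\sB_k}\lesssim 2^{-\epsilon_2|k|}$, hence $2\in\sP$.

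Next, I would show an iteration step: if $p_\ast\in\sP$ with $p_\ast>1$, then there is some $q_\ast<p_\ast$ with $q_\ast\in\sP$. Given $p_\ast\in\sP$, Proposition \ref{PropSTSsBk} yields the scalar bound $\LpOpN{p_\ast}{\sM_\nuset}<\infty$, which by interpolation with the trivial $L^\infty$ bound also gives $\sM_\nuset$ on $L^p$ for all $p\in[p_\ast,\infty]$. Using the expansion $B_j=\sum_{E\subseteq\nuset}(-1)^{|E|}A_{j_{E^c}}M_{j_E}$, the pointwise estimates $|A_{j_{E^c}}g|\leq \prod_{\mu\in E^c}\sM^\mu|g|$, $|M_{j_E}g|\leq\sM_E|g|$, and the Fefferman-Stein-style vector-valued maximal inequalities (classical on spaces of homogeneous type for the single-parameter $\sM^\mu$; available inductively for $\sM_E$, $E\subsetneq\nuset$; and derivable for $\sM_\nuset$ on the already-available range via the Jensen-type bound $(\sM_\nuset f)^2\lesssim\sM_\nuset(f^2)$), I would establish a uniform bound $\LplqOpN{q_0}{2}{\sB_k}\lesssim 1$ (no decay in $k$) for some $q_0$ strictly less than $p_\ast$. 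Complex interpolation between this trivial bound on $L^{q_0}(\ell^2)$ and the geometric decay on $L^2(\ell^2)$ then yields $\LplqOpN{q}{2}{\sB_k}\lesssim 2^{-\epsilon(q)|k|}$ for every $q\in(q_0,2)$, hence $q_0\in\sP$ (taking $q=q_0+\eta$ if needed) and in fact $(q_0,2]\subseteq\sP$. Iterating shows $\inf\sP=1$, so $\sP=(1,2]$.

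The main obstacle will be controlling the $E=\nuset$ term $A_{j_\emptyset}M_j\propto \denum{\psi}{-1}^{2\nu}M_j$ in the trivial vector-valued bound. Its pointwise domination by $\sM_\nuset$ is only useful after $\sM_\nuset$ has already been shown bounded on the relevant Lebesgue exponent, which is what creates the need for the bootstrap. The delicate quantitative step is verifying that each application of the iteration genuinely decreases the lower endpoint of the already-known range in $\sP$, so that the sequence $p_\ast$ produced converges down to $1$ rather than stagnating above it; this follows from the fact that, once $\sM_\nuset$ is bounded on $L^{p_\ast}$ for any $p_\ast$, the Jensen-type identity and a Fefferman-Stein argument furnish a vector-valued bound on an open range extending strictly to the left of $p_\ast$.
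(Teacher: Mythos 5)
Your base case $2\in\sP$ is exactly the paper's, but your iteration step contains a genuine gap. You assert that once $\sM_{\nuset}$ is bounded on $L^{p_*}$, a Fefferman--Stein-type argument yields the \emph{trivial} bound $\LplqOpN{q_0}{2}{\sB_k}\lesssim 1$ for \emph{some} $q_0<p_*$. The Jensen step $(\sM_{\nuset}f)^2\lesssim\sM_{\nuset}(f^2)$ reduces this to an $\ell^1$-valued estimate $\|\sum_j\sM_{\nuset}g_j\|_{q_0/2}\lesssim\|\sum_j g_j\|_{q_0/2}$ with $g_j\geq 0$; but this is itself a nontrivial vector-valued maximal estimate (it does not follow from monotonicity, since $\sM_{\nuset}(\sum g_j)\leq \sum\sM_{\nuset}g_j$, which is the wrong direction), and more importantly it must hold at the exponent $q_0/2<q_0<p_*$, which is outside the range where $\sM_{\nuset}$ is known to be bounded. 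So the iteration has no guaranteed mechanism for strictly decreasing the lower endpoint; it risks stalling above $1$. You flag this as "the delicate quantitative step," but you don't actually supply an argument for it, and it is not a small gap.

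The paper avoids this entirely by changing both ends of the interpolation. On the trivial side it uses $\ell^\infty$ rather than $\ell^2$: once $q\in\sP$ so that $\sM_{\nuset}$ is bounded on $L^q$, the bound $\LplqOpN{q}{\infty}{\sB_k}\lesssim 1$ is elementary — one writes $B_j=\sum_E(-1)^{|E|}A_{j_{E^c}}M_{j_E}$, dominates each factor pointwise by a maximal operator applied to $\sup_l|f_l|$, and uses scalar $L^q$ boundedness of those maximal operators (the $E=\nuset$ term being exactly where $q\in\sP$ enters). There is no Fefferman--Stein input here. On the decaying side, instead of $L^2(\ell^2)$ alone, the paper first interpolates the $L^2(\ell^2)$ decay with the trivial $L^1(\ell^1)$ bound $\LplqOpN{1}{1}{\sB_k}\lesssim 1$ to get $\LplqOpN{p}{p}{\sB_k}\lesssim 2^{-\epsilon_p|k|}$ for all $1<p\leq 2$. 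Interpolating $L^q(\ell^\infty)$ (no decay) against $L^p(\ell^p)$ (decay) and sending $p\to 1^+$ lands in $L^r(\ell^2)$ with decay for every $r>2q/(q+1)$, giving the explicit improvement $(\frac{2q}{q+1},2]\subseteq\sP$. This quantitative gain at each step is what makes the bootstrap converge to $1$; your proposal lacks an analogue of it.
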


Proposition \ref{PropsPEqual}, in turn, follows directly
from the next lemma.

\begin{lemma}\label{LemmasPProp}
\begin{itemize}
\item $2\in \sP$,
\item If $q\in \sP$, then $\q(\frac{2q}{q+1}, 2\w]\subseteq \sP$.
\end{itemize}
\end{lemma}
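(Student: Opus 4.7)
By the first estimate of Theorem \ref{ThmL2Thm}, $\|B_jD_{j+k}\|_{L^2\to L^2}\lesssim 2^{-\epsilon_2|k|}$ uniformly in $j$ (since $\diam\{j,j+k\}=|k|$). Because $\sB_k$ acts diagonally on the sequence index,
\[
\|\sB_k\{f_j\}\|_{L^2(\ell^2)}^2=\sum_j\|B_jD_{j+k}f_j\|_{L^2}^2\lesssim 2^{-2\epsilon_2|k|}\|\{f_j\}\|_{L^2(\ell^2)}^2,
\]
so $2\in\sP$.

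\textbf{Part (b): the bootstrap.} Suppose $q\in\sP$ with decay $2^{-\epsilon_q|k|}$. By Proposition \ref{PropSTSsBk}, $\sM_\nuset$ is bounded on $L^q$, and hence on $L^s$ for every $s\in[q,\infty]$ by interpolation with the trivial $L^\infty$ bound. Following the bootstrap of Nagel--Stein--Wainger, the plan is to interpolate the hypothesized decaying $L^q(\ell^2)$ bound on $\sB_k$ against a \emph{uniform} (no-decay) vector-valued $L^r(\ell^2)\to L^r(\ell^2)$ bound at the endpoint $r=\frac{2q}{q+1}$, thereby producing decay on $L^p(\ell^2)$ for every $p\in(r,q]$.

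For the uniform bound, I combine the pointwise domination $|B_jh|\lesssim \sM_\nuset|h|$ (valid for each fixed $j$, since $B_j$ is a bounded integer combination of the nonnegative averaging operators $A_{j_{E^c}}M_{j_E}$) with Cauchy--Schwarz in the sequence index: setting $g_j=B_jD_{j+k}f_j$, one has $|g_j|^2\lesssim |g_j|\cdot\sM_\nuset|D_{j+k}f_j|$, and summing then applying Cauchy--Schwarz in $j$ gives the pointwise estimate
\[
\biggl(\sum_j|B_jD_{j+k}f_j|^2\biggr)^{1/2}\lesssim\biggl(\sum_j|\sM_\nuset D_{j+k}f_j|^2\biggr)^{1/2}.
\]
Taking $L^r$ norms, a vector-valued Fefferman--Stein-type inequality for $\sM_\nuset$ (derived from its scalar $L^q$ and $L^\infty$ bounds by standard interpolation in the vector-valued setting, with the allowed range of $r$ governed precisely by the endpoint $r=\frac{2q}{q+1}$) reduces the right-hand side to $\|(\sum_j|D_{j+k}f_j|^2)^{1/2}\|_{L^r}$. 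Applying the vector-valued Littlewood--Paley estimate from Theorem \ref{ThmSquare} (componentwise, to the reindexed family $\{D_{j+k}\}$) then produces the desired bound $\|\sB_k\{f_j\}\|_{L^r(\ell^2)}\lesssim\|\{f_j\}\|_{L^r(\ell^2)}$. Complex interpolation of this uniform $L^r(\ell^2)$ bound against the decaying $L^q(\ell^2)$ bound yields $\|\sB_k\|_{L^p(\ell^2)\to L^p(\ell^2)}\lesssim 2^{-\theta\epsilon_q|k|}$ for the appropriate $\theta=\theta(p,q,r)\in(0,1]$, covering all $p\in(r,q]$.

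\textbf{Main obstacle.} The crux is the vector-valued Fefferman--Stein inequality for $\sM_\nuset$ at the endpoint $r=\frac{2q}{q+1}$. At this stage of the induction $\sM_\nuset$ is only known to be scalar-bounded on $L^q$ and on $L^\infty$, and extending this to the vector-valued $L^r(\ell^2)$ statement is precisely what forces the arithmetic constraint $p>\frac{2q}{q+1}$ in the conclusion. Carrying this extension out rigorously requires careful use of the multi-parameter Littlewood--Paley structure together with the inductive hypotheses on the simpler maximal operators $\sM_E$ ($E\subsetneq\nuset$) and $\sM^\mu$, which ensure the requisite pointwise and $L^s$-type control at the auxiliary scales.
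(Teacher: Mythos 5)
Part (a) is fine and matches the paper's argument.

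Part (b) diverges from the paper at the crucial step, and there is a genuine gap. You want to establish a \emph{uniform} vector-valued bound $\|\sB_k\|_{L^r(\ell^2)\to L^r(\ell^2)}\lesssim 1$ at the endpoint $r=\frac{2q}{q+1}<q$, via a ``Fefferman--Stein-type inequality'' for $\sM_{\nuset}$ on $L^r(\ell^2)$. But at this stage of the induction the only scalar bounds available for $\sM_{\nuset}$ are on $L^q$ and on $L^\infty$. For a positive sublinear operator known to be bounded on $L^q$ and $L^\infty$, the vector-valued bounds that follow by ``standard interpolation'' are (via Fubini) $L^q(\ell^q)$, (via positivity of the maximal operator) $L^p(\ell^\infty)$ for $p\in[q,\infty]$, and their interpolants $L^p(\ell^s)$ — all with both indices $\geq q$. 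There is no way to push the Lebesgue index below $q$ from these hypotheses; an $L^r(\ell^2)$ bound with $r<q$ for $\sM_{\nuset}$ is not available and is not what makes the argument work. (A second, smaller inaccuracy: the pointwise domination you cite is not $|B_j h|\lesssim\sM_{\nuset}|h|$; in fact $B_j$ is dominated by a composite of the form $\sum_E\bigl[\prod_{\mu\in E^c}\sM^\mu\bigr]\sM_E\bigl[\prod_\mu\sM^\mu\bigr]$, involving all the auxiliary maximal operators, not $\sM_{\nuset}$ alone.)

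What the paper actually does is avoid any vector-valued maximal inequality below $L^q$. It first combines the decaying $L^2(\ell^2)$ bound on $\sB_k$ (your part (a)) with the trivial \emph{uniform} $L^1(\ell^1)$ bound $\LplqOpN{1}{1}{\sB_k}\lesssim 1$ (from $\LpOpN{1}{B_jD_{j+k}}\lesssim 1$ by interchanging norms) to obtain, by interpolation, decaying bounds $\LplqOpN{p}{p}{\sB_k}\lesssim 2^{-\epsilon_p|k|}$ for every $1<p\leq 2$. Separately, from $q\in\sP$ one gets that $\sM_{\nuset}$ is bounded on $L^q$, and hence the \emph{uniform} $\ell^\infty$ bound $\LplqOpN{q}{\infty}{\sB_k}\lesssim 1$ (via the composite pointwise domination and the $L^q$-boundedness of the maximal operators involved). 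Interpolating $L^p(\ell^p)$-with-decay (letting $p\downarrow 1$) against $L^q(\ell^\infty)$-uniform produces decay on $L^{p'}(\ell^2)$ for $p'\in(\frac{2q}{q+1},2]$; the threshold $\frac{2q}{q+1}$ emerges from this interpolation arithmetic, not from a vector-valued maximal inequality. You should replace your asserted $L^r(\ell^2)$ endpoint by the paper's $L^1(\ell^1)$ and $L^q(\ell^\infty)$ endpoints to make part (b) sound.
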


It is easy to see that any subset of $\q( 1,2\w]$ satisfying
the conclusions of Lemma \ref{LemmasPProp} must equal
$\q(1,2\w]$.  The rest of this section is devoted
to the proof of Lemma \ref{LemmasPProp},
which then completes the proof of Theorem \ref{ThmMainMaxThm}.

\begin{proof}[Proof of Lemma \ref{LemmasPProp}]
That $2\in \sP$ follows directly from Theorem \ref{ThmL2Thm}.
In fact, if $\epsilon_2>0$ is as in Theorem \ref{ThmL2Thm},
we have,
\begin{equation}\label{EqnsBkL2Ineq}
\LplqOpN{2}{2}{\sB_k} \lesssim 2^{-\epsilon_2\q|k\w|};
\end{equation}
merely by interchanging the norms.  Moreover, using that,
\begin{equation*}
\LpOpN{1}{B_j D_{j+k}}\lesssim 1,
\end{equation*}
we have,
\begin{equation}\label{EqnsBkL1Ineq}
\LplqOpN{1}{1}{\sB_k} \lesssim 1;
\end{equation}
also by interchanging the norms.
Interpolating \eqref{EqnsBkL2Ineq} and \eqref{EqnsBkL1Ineq}
shows for $1<p\leq 2$,
\begin{equation}\label{EqnsBkLpIneq}
\LplqOpN{p}{p}{\sB_k} \lesssim 2^{-\epsilon_p \q|k\w|},
\end{equation}
where $\epsilon_p=\q( 2-\frac{2}{p}\w)\epsilon_2>0$.

Now suppose $q\in \sP$.  By Proposition \ref{PropSTSsBk}
$\sM_{\nuset}$ is bounded on $L^q$.
We claim,
\begin{equation}\label{EqnlinfsB}
\LplqOpN{q}{\infty}{\sB_k} \lesssim 1.
\end{equation}

Before we verify \eqref{EqnlinfsB}, recall the maximal functions
$\sM^{\mu}$ defined in Section \ref{SectionAuxOps}.
$\sM^{\mu}$ is bounded on $L^p$ ($1<p\leq \infty$), and
by \eqref{EqnSupAjBound} and the definition of $A_{j_E}$, we have
\begin{equation*}
\q|A_{j_E} f\q(x\w)\w| \lesssim \q[\prod_{\mu\in E} \sM^\mu \w] f\q( x\w),
\end{equation*}
where the product is taken in order of increasing $\mu$.  A similar
result holds for $D_j$ with $E$ replaced by $\nuset$.

We now turn to verifying \eqref{EqnlinfsB}.
\begin{equation*}
\begin{split}
\LpN{q}{\sup_{j\in \N^{\nu}} \q|B_j D_{j+k} f_j\w|} &\leq \sum_{E\subseteq \nuset} \LpN{q}{\sup_{j\in \N^{\nu}} \q|A_{j_{E^c}} M_{j_E} D_{j+k}f_j\w|}\\
&\lesssim \sum_{E\subseteq \nuset} \LpN{q}{\q[\prod_{\mu\in E^c} \sM^{\mu}\w]\sM_E \q[\prod_{\mu=1}^\nu \sM^{\mu}\w] \sup_{j\in \N^{\nu}} \q|f_j\w| }\\
&\lesssim \LpN{q}{\sup_{j\in \N^{\nu}} \q|f_j\w|}.
\end{split}
\end{equation*}
In the last line, we have used our inductive hypothesis when $E\ne \nuset$ and
we used that $\sM_{\nuset}$ is bounded on $L^q$ when $E=\nuset$.
This completes the verification of \eqref{EqnlinfsB}.

Interpolating \eqref{EqnlinfsB} with \eqref{EqnsBkLpIneq}
as $p\rightarrow 1$ proves that $\q(\frac{2q}{q+1},2\w]\subseteq \sP$.
Here, we have implicitly used the fact that (by interpolation)
if $r\in \sP$, then $\q[r,2\w]\subseteq \sP$ (since $2\in \sP$).
\end{proof}

\section{Proof of Theorem \ref{ThmMainThmSecondPass}}
This section is devoted to proving Theorem \ref{ThmMainThmSecondPass}:
$T:L^p\rightarrow L^p$, $1<p<\infty$.
Since the class of operators covered in Theorem \ref{ThmMainThmSecondPass}
is closed under adjoints (see Section 12.3 of \cite{StreetMultiParameterSingRadonLt}), it suffices to prove the
result for $1<p\leq 2$.

We decompose $T=\sum_{j\in \N^{\nu}} T_j$ as in Section \ref{SectionAuxOps}.
In what follows, $T_j$ and $D_j$ for $j\in \Z^\nu\setminus \N^\nu$
are defined to be $0$.
For $k_1,k_2\in \Z^\nu$, define a new operator, acting on sequences of
measurable functions $\q\{f_j\q( x\w)\w\}_{j\in \N^{\nu}}$ by
\begin{equation*}
\sT_{k_1,k_2} \q\{f_j\w\}_{j\in \N^\nu} = \q\{D_j T_{j+k_1} D_{j+k_2} f_j\w\}_{j\in \N^\nu}.
\end{equation*}
Theorem \ref{ThmMainThmSecondPass} follows immediately from a
combination of the following two propositions.

\begin{prop}\label{PropIfsTBound}
Fix $p_0$, $1<p_0<\infty$.  If there exists $\epsilon>0$ such that
\begin{equation*}
\LplqOpN{p_0}{2}{\sT_{k_1,k_2}}\lesssim 2^{-\epsilon\q(\q|k_1\w|+\q|k_2\w|\w)},
\end{equation*}
then $T$ is bounded on $L^{p_0}$.
\end{prop}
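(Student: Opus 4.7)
The plan is to mimic the proof of Proposition \ref{PropSTSsBk}, with one twist: we must sandwich $T$ between two copies of $U_M$ (rather than just one), and consequently invoke both the forward and the dual forms of the square function estimate.

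Fix $M=M(p_0)$ from Theorem \ref{ThmReproduce}, and let $q_0$ be H\"older conjugate to $p_0$. Since $\psi_1,\psi_2\prec \denum{\psi}{-2}$, we have $T=\denum{\psi}{-2}T\denum{\psi}{-2}$. Applying $\denum{\psi}{-2}=\denum{\psi}{-2}U_MV_M$ on the input side of $T$ and, by taking adjoints, $\denum{\psi}{-2}=\denum{\psi}{-2}U_M^*V_M^*$ on the output side, one computes for $h\in L^{q_0}$ that
\[
\ip{h}{Tf}=\ip{V_M^*h}{U_MTU_Mg},\qquad g:=V_Mf.
\]
Since $V_M:L^{p_0}\rightarrow L^{p_0}$ is bounded, $V_M^*:L^{q_0}\rightarrow L^{q_0}$ is bounded and $\LpN{p_0}{g}\lesssim\LpN{p_0}{f}$; taking the supremum over $\LpN{q_0}{h}=1$ reduces the theorem to the estimate $\LpN{p_0}{U_MTU_Mg}\lesssim\LpN{p_0}{g}$.

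Next, expand $U_M=\sum_{\q|l\w|\leq M}\sum_{j_0\in\N^\nu}D_{j_0}D_{j_0+l}$ on each side of $T=\sum_jT_j$ and reindex using $i=j_1+l_1$, $k_1=j-i$, $k_2=j_2-i$ (with the convention that $D_j=T_j=0$ outside $\N^\nu$), to obtain
\[
U_MTU_Mg = \sum_{\substack{\q|l_1\w|,\q|l_2\w|\leq M\\ k_1,k_2\in\Z^\nu}}\sum_iD_{i-l_1}F_i,\qquad F_i:=D_iT_{i+k_1}D_{i+k_2}\q(D_{i+k_2+l_2}g\w).
\]
The sums over $l_1,l_2$ are finite and the sums over $k_1,k_2$ will be absorbed by the hypothesized geometric decay, so after the triangle inequality matters reduce to proving, for each fixed $(k_1,k_2,l_1,l_2)$,
\[
\LpN{p_0}{\sum_iD_{i-l_1}F_i}\lesssim 2^{-\epsilon(\q|k_1\w|+\q|k_2\w|)}\LpN{p_0}{g}.
\]

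For this last bound we apply duality and then Cauchy--Schwarz in $i$: for $\LpN{q_0}{h}=1$,
\[
\Bigl|\sum_i\ip{h}{D_{i-l_1}F_i}\Bigr| = \Bigl|\sum_i\ip{D_{i-l_1}^*h}{F_i}\Bigr|\leq \LpN{q_0}{\bigl(\sum_i\q|D_{i-l_1}^*h\w|^2\bigr)^{1/2}}\LpN{p_0}{\bigl(\sum_i\q|F_i\w|^2\bigr)^{1/2}}.
\]
By the dual form of Lemma \ref{LemmaSquareLessf} the first factor is $\lesssim 1$. The second factor is exactly $\LplqN{p_0}{2}{\sT_{k_1,k_2}\q\{D_{i+k_2+l_2}g\w\}_i}$, which by hypothesis is bounded by $2^{-\epsilon(\q|k_1\w|+\q|k_2\w|)}\LplqN{p_0}{2}{\q\{D_{i+k_2+l_2}g\w\}_i}$, and this in turn is $\lesssim 2^{-\epsilon(\q|k_1\w|+\q|k_2\w|)}\LpN{p_0}{g}$ by \eqref{EqnSquareLessf} (after a harmless index shift, using $D_j=0$ outside $\N^\nu$). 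Summing the geometric series in $k_1,k_2$ and the finite sums in $\q|l_1\w|,\q|l_2\w|\leq M$ closes the argument. There is no deep obstacle; the only mildly delicate point is the two-sided sandwich, which forces us to use both the forward estimate \eqref{EqnSquareLessf} (to digest the innermost factor $D_{i+k_2+l_2}g$ appearing inside $F_i$) and its dual form (to absorb the outermost $D_{i-l_1}$), so that the four factors of $D$ generated by $U_MTU_M$ are consumed in tandem with the $\ell^2$-valued hypothesis on $\sT_{k_1,k_2}$.
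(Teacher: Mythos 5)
Your proof is correct, but it takes a route that is slightly different from the paper's, and it is worth noting how.  The paper multiplies $T$ by $U_MV_M$ only on the input side (writing $T=TU_MV_M$), passes to the square function $\q(\sum_j|D_jTU_Mf|^2\w)^{1/2}$ by invoking the \emph{forward} Littlewood--Paley estimate \eqref{EqnfLessSquare}, expands $T$ and the single $U_M$ to produce exactly the triple $D_j\,T_{j+k_1}\,D_{j+k_2}$ appearing in $\sT_{k_1,k_2}$ (plus one extra $D_{j+k_2+l}$ that is digested by \eqref{EqnSquareLessf}).  You instead sandwich with two $U_M$'s, writing $T=V_MU_MTU_MV_M$, which generates one additional outer $D_{i-l_1}$; you then use duality and Cauchy--Schwarz, absorbing that extra factor with the dual form of Lemma~\ref{LemmaSquareLessf}.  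The two arguments are essentially the same at the level of ideas: the paper's proof of the forward estimate \eqref{EqnfLessSquare} is itself exactly the duality--plus--reproducing--formula move you perform here, so you are in effect inlining that lemma's proof rather than citing it.  The paper's version is shorter once Theorem~\ref{ThmSquare} is in hand; yours has the minor advantage of only needing the easy half of the square-function theory (Lemma~\ref{LemmaSquareLessf}) together with the reproducing formula.  One small imprecision: the remark that ``by taking adjoints, $\denum{\psi}{-2}=\denum{\psi}{-2}U_M^*V_M^*$'' is not what you actually use (and would give $U_M^*$ rather than $U_M$); the identity you want, $\denum{\psi}{-2}=V_MU_M\denum{\psi}{-2}$, is already asserted directly in Theorem~\ref{ThmReproduce}, so no adjoint is required.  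This does not affect the validity of the rest of the argument, which uses the correct identity.
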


\begin{prop}\label{PropsTBound}
For each $p$, $1<p\leq 2$, there is an $\epsilon=\epsilon\q( p\w)>0$ such
that,
\begin{equation*}
\LplqOpN{p}{2}{\sT_{k_1,k_2}} \lesssim 2^{-\epsilon\q( \q|k_1\w|+\q|k_2\w| \w) }.
\end{equation*}
\end{prop}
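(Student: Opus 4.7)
The argument will parallel the bootstrap proof of Theorem~\ref{ThmMainMaxThm} via Lemma~\ref{LemmasPProp}. Define
\[
\sP = \{ p \in (1, 2] : \exists\, \epsilon > 0 \text{ with } \LplqOpN{p}{2}{\sT_{k_1,k_2}} \lesssim 2^{-\epsilon(|k_1| + |k_2|)} \}.
\]
The plan is to prove (a) $2 \in \sP$, and (b) $q \in \sP$ implies $(2q/(q+1), 2] \subseteq \sP$; iterating $q \mapsto 2q/(q+1)$ starting from $q = 2$ then forces $\sP = (1,2]$, which is the desired conclusion. Fact (a) is immediate from Theorem~\ref{ThmL2Thm}, since $\LpOpN{2}{D_j T_{j+k_1} D_{j+k_2}} \lesssim 2^{-\epsilon_2 \diam{j, j+k_1, j+k_2}} \lesssim 2^{-\epsilon_2(|k_1|+|k_2|)/2}$, and the $L^2(\ell^2)$ norm of a diagonal operator equals the supremum of the entrywise $L^2$ operator norms.

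For (b), I would produce two complementary families of norm bounds and interpolate. The first is the scalar-diagonal family $\LplqOpN{p}{p}{\sT_{k_1,k_2}} \lesssim 2^{-\epsilon_p(|k_1|+|k_2|)}$ for $1 < p \leq 2$, obtained by interpolating the trivial uniform bound $\LplqOpN{1}{1}{\sT_{k_1,k_2}} \lesssim 1$ (which follows from the uniform $L^1$-boundedness of each $T_j$ and $D_j$, itself a consequence of the $L^1$-normalization of $\vsig_j$, $\phi_{\mu, j_\mu}$ and a change of variables $y = \gamma_t(x)$) with the $L^2(\ell^2)$ estimate from (a). The second is $\LplqOpN{q}{\infty}{\sT_{k_1,k_2}} \lesssim 1$ for any $1 < q \leq \infty$, obtained from the pointwise majorizations, uniform in $j$,
\[
|D_j f|(x) \lesssim \prod_{\mu=1}^\nu \sM^\mu |f|(x), \qquad |T_j f|(x) \lesssim \sM |f|(x),
\]
both arising from the $L^1$-normalization and compact support of the underlying bumps at scale $2^{-j}$ (for $T_j$, the kernel $\dil{\vsig_j}{2^j}$ is pointwise dominated by the averaging kernel that defines the $\delta = 2^{-j}$ slice of $\sM$). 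Composing these and pushing $\sup_j$ inward using the monotonicity of the maximal operators yields
\[
\sup_{j \in \N^\nu} |D_j T_{j+k_1} D_{j+k_2} f_j|(x) \lesssim \Bigl[\prod_\mu \sM^\mu\Bigr]\, \sM\, \Bigl[\prod_\mu \sM^\mu\, \sup_{j \in \N^\nu} |f_j|\Bigr](x),
\]
whose $L^q$ norm is controlled by $\LpN{q}{\sup_j |f_j|} = \LplqN{q}{\infty}{\{f_j\}}$ thanks to Theorems~\ref{ThmSingleParamMax} and~\ref{ThmMainMaxThm}.

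Finally, complex interpolation gives $[L^p(\ell^p), L^q(\ell^\infty)]_\theta = L^r(\ell^s)$ with $1/r = (1-\theta)/p + \theta/q$ and $1/s = (1-\theta)/p$; setting $s = 2$ forces $\theta = 1 - p/2$, whence $1/r = 1/2 + (1 - p/2)/q$. As $p$ ranges over $(1, 2]$, $r$ sweeps out $(2q/(q+1), 2]$, and the interpolated decay exponent is $(1-\theta)\epsilon_p = (p/2)\epsilon_p$, strictly positive for $p > 1$. This establishes $(2q/(q+1), 2] \subseteq \sP$ and completes the bootstrap. The main obstacle I anticipate is the pointwise estimate $|T_j f|(x) \lesssim \sM |f|(x)$ uniformly in $j$: it requires carefully matching the multi-parameter dilation of $\dil{\vsig_j}{2^j}$ against the supremum structure of $\sM$, although once that is verified the rest of the argument is a direct parallel of the maximal case.
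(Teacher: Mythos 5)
Your proof is substantively correct and uses the same two families of interpolation endpoints as the paper: the decaying $L^p(\ell^p)$ bound obtained by interpolating the uniform $L^1(\ell^1)$ estimate with the decaying $L^2(\ell^2)$ estimate from Theorem \ref{ThmL2Thm}, and the uniform $L^q(\ell^\infty)$ bound obtained from the pointwise domination by $\prod_\mu \sM^\mu$, $\sM$, and $\prod_\mu \sM^\mu$ together with Theorems \ref{ThmSingleParamMax} and \ref{ThmMainMaxThm}. The one structural difference is that you wrap the argument in the bootstrap scaffolding of Lemma \ref{LemmasPProp}, but here that scaffolding is vacuous: your step (b) never actually uses the hypothesis $q\in\sP$, since the $L^q(\ell^\infty)$ bound for $\sT_{k_1,k_2}$ is derived from the already-established Theorem \ref{ThmMainMaxThm} for every $1<q<\infty$, not from $\sM_{\nuset}$ being bounded on $L^q$ as a consequence of $q\in\sP$. (That latter dependence is exactly what made the bootstrap essential in the proof of Theorem \ref{ThmMainMaxThm}, but it is absent here because Theorem \ref{ThmMainMaxThm} is proved first.) As a result, for a given target $r\in(1,2]$ you may simply pick $q$ large enough and $p$ accordingly and interpolate in a single step, which is what the paper does. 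Your bootstrap reaches the same conclusion, just less directly. One small point: the ``main obstacle'' you flag, the pointwise bound $|T_jf|\lesssim \sM|f|$ uniformly in $j$, is indeed the content that must be checked, but it follows readily from the substitution $t\mapsto 2^{-j}t$, the uniform $C_0^\infty(\Q^N(a))$ bounds on $\{\vsig_j\}$, and matching the resulting integral against the $\delta=2^{-j}$ slice in the definition of $\sM$; the paper treats this as immediate.
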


\begin{proof}[Proof of Proposition \ref{PropIfsTBound}]
Fix $p_0$, $1<p_0<\infty$.  Take $M=M\q( p_0\w)$ as
in Theorem \ref{ThmReproduce}.
We have,
$T=T\denum{\psi}{-2} = T \denum{\psi}{-2}U_M V_M= T U_M V_M$.
Since $V_M$ is bounded on $L^{p_0}$, it suffices to show
$TU_M$ is bounded on $L^{p_0}$.

Consider, using Theorem \ref{ThmSquare},
\begin{equation*}
\begin{split}
\LpN{p_0}{TU_M f} & =\LpN{p_0}{\denum{\psi}{-2}TU_M f}\\
&\approx \LpN{p_0}{\q(\sum_{j\in \N^{\nu}} \q|D_j \denum{\psi}{-2} T U_M f\w|^2 \w)^{1/2}}\\
&= \LpN{p_0}{\q(\sum_{j\in \N^{\nu}} \q|D_j T U_M f\w|^2 \w)^{1/2}}.
\end{split}
\end{equation*}
Thus, to complete the proof, it suffices to show,
\begin{equation*}
\LpN{p_0}{\q(\sum_{j\in \N^{\nu}} \q|D_j T U_M f\w|^2 \w)^{1/2}}\lesssim \LpN{p_0}{f}.
\end{equation*}

We have, by the triangle inequality,
\begin{equation*}
\begin{split}
\LpN{p_0}{\q(\sum_{j\in \N^{\nu}} \q|D_j T U_M f\w|^2 \w)^{1/2}} &= 
\LpN{p_0}{\q( \sum_{j\in \N^{\nu}} \q|\sum_{\substack{j_1,j_2\in \N^{\nu}\\ \q|l\w|\leq M}} D_j T_{j_1} D_{j_2} D_{j_2+l} f\w|^2 \w)^{1/2} }\\
&=\LpN{p_0}{\q( \sum_{j\in \N^{\nu}} \q|\sum_{\substack{k_1,k_2\in \Z^{\nu}\\ \q|l\w|\leq M}} D_j T_{j+k_1} D_{j+k_2} D_{j+k_2+l} f\w|^2 \w)^{1/2} }\\
&\leq \sum_{\substack{k_1,k_2\in \Z^{\nu}\\ \q|l\w|\leq M}}\LpN{p_0}{\q( \sum_{j\in \N^{\nu}} \q| D_j T_{j+k_1} D_{j+k_2} D_{j+k_2+l} f\w|^2 \w)^{1/2} }\\
&= \sum_{\substack{k_1,k_2\in \Z^{\nu}\\ \q|l\w|\leq M}} \LplqN{p_0}{2}{\sT_{k_1,k_2} \q\{D_{j+k_2+l}f\w\}_{j\in \N^{\nu}} }\\
&\lesssim \sum_{\substack{k_1,k_2\in \Z^{\nu}\\ \q|l\w|\leq M}}  2^{-\epsilon\q( \q|k_1\w|+\q|k_2\w|\w)} \LplqN{p_0}{2}{\q\{D_j f\w\}_{j\in \N^{\nu}}}\\
&\lesssim \LpN{p_0}{\q(\sum_{j\in \N^{\nu}} \q|D_j f\w|^2 \w)^{1/2} }\\
&\lesssim \LpN{p_0}{f},
\end{split}
\end{equation*}
where, in the last line, we have applied Theorem \ref{ThmSquare}.
This completes the proof.
\end{proof}

\begin{proof}[Proof of Proposition \ref{PropsTBound}]
We first prove the result for $p=2$.  In this case, the result follows
immediately from Theorem \ref{ThmL2Thm}.
Indeed if $\epsilon_2>0$ is as in Theorem \ref{ThmL2Thm},
we have,
\begin{equation*}
\LplqOpN{2}{2}{\sT_{k_1,k_2}}\lesssim 2^{-\frac{\epsilon_2}{2}\q( \q|k_1\w|+\q|k_2\w| \w)};
\end{equation*}
merely by interchanging the norms.
In addition, since,
\begin{equation}\label{EqnsTL2}
\LpOpN{1}{D_jT_{j+k_1}{D_{j+k_2}}}\lesssim 1,
\end{equation}
we have,
\begin{equation}\label{EqnsTL1}
\LplqOpN{1}{1}{\sT_{k_1,k_2}}\lesssim 1;
\end{equation}
also by interchanging the norms.
Interpolating \eqref{EqnsTL2} and \eqref{EqnsTL1}
shows that for every $p$, $1<p\leq 2$,
\begin{equation}\label{EqnsTLp}
\LplqOpN{p}{p}{\sT_{k_1,k_2}}\lesssim 2^{-\epsilon_p\q(\q|k_1\w|+\q|k_2\w| \w)},
\end{equation}
where $\epsilon_p=\q(1-\frac{1}{p} \w)\epsilon_2>0$.

We claim, for every $p$, $1<p<\infty$,
\begin{equation}\label{EqnsTlinf}
\LplqOpN{p}{\infty}{\sT_{k_1,k_2}}\lesssim 1.
\end{equation}
We use the maximal operators $\sM^\mu$ defined in Section \ref{SectionAuxOps}
along with the maximal operator $\sM$ from Theorem \ref{ThmMainMaxThm}.
We have,
\begin{equation*}
\begin{split}
\LpN{p}{\sup_{j} \q|\sT_{k_1,k_2} \q\{f_j\w\}_{j\in \N^{\nu}}\w|} &= \LpN{p}{\sup_j \q| D_j T_{j+k_1} D_{j+k_2} f_j \w|  }\\
&\lesssim \LpN{p}{\q[\prod_{\mu=1}^\nu \sM^\mu \w]\sM \q[\prod_{\mu=1}^\nu \sM^\mu\w]\sup_j \q|f_j\w|}\\
&\lesssim \LpN{p}{\sup_j \q|f_j\w|}.
\end{split}
\end{equation*}
In the last line, we used the $L^p$ boundedness of the various maximal functions.
\eqref{EqnsTlinf} follows.

Interpolating \eqref{EqnsTLp} and \eqref{EqnsTlinf} yields the result.
\end{proof}

\section{More general kernels}\label{SectionMoreKernels}
In the previous sections, we exhibited the proofs
of Theorems \ref{ThmMainThmSecondPass}
and \ref{ThmMainMaxThm} in the special case $\mu_0=\nu$.
That is, in the case when $\sA_{\mu_0}=\q[0,1\w]^\nu$.
In this section, we describe the modifications necessary
to prove the result for general $\mu_0$.  At the end of the
section we make some remarks about even more general
sets $\sA\subseteq \q[0,1\w]^\nu$ for which our methods apply.

Fix $\mu_0$, $1\leq \mu_0\leq \nu$.  Recall,
\begin{equation*}
\sA_{\mu_0} = \q\{\delta=\q( \delta_1,\ldots, \delta_\nu\w) \in \q[0,1\w]^\nu: \delta_{\mu_0}\leq \delta_{\mu_0+1}\leq \cdots \leq \delta_{\nu}\w\}.
\end{equation*}
Our decomposition of $T$ now takes the form,
\begin{equation*}
T= \sum_{j\in \lA} T_j,
\end{equation*}
where
\begin{equation*}
\lA = \q\{j\in \N^{\nu} : 2^{-j}\in \sA_{\mu_0}\w\}=\q\{j\in \N^{\nu}: j_{\mu_0}\geq j_{\mu_0+1}\geq \cdots \geq j_{\nu}\w\};
\end{equation*}
see Section \ref{SectionKernels} for more details.

The proof in this case remains almost exactly the same, provided
we make a few different choices when defining
the auxiliary operators in Section \ref{SectionAuxOps}.
In the case when $\mu_0=\nu$, we defined the vector fields
with single parameter formal degrees $\q( X^\mu, d^{\mu}\w)$
so that,
\begin{equation}\label{EqnChooseXmu}
\q( \delta_\mu X^{\mu}, d^{\mu}\w) = \q( \deltah X, \sd\w),
\end{equation}
where $\deltah\in \q[0,1\w]^\nu$ was $\delta_\mu$ in the $\mu$
coordinate and $0$ in every other coordinate.
In the case when $\mu_0<\mu$, this choice no longer works.
Instead, we choose $\q( X^\mu, d^\mu\w)$ so that
\eqref{EqnChooseXmu} holds
where $\deltah$ is defined to be $\delta_\mu$ in the $\mu'$ coordinate,
for every $\mu'\geq \mu$, and defined to be $0$ in the rest of the
coordinates.
The operators $A_j^{\mu}$ and $D_j^{\mu}$ are defined with this choice
of $\q( X^\mu, d^\mu\w)$.

For $E\subseteq \nuset$, $j_E\in \Ninf^{\nu}$ must be defined
differently, so that $2^{-j_E}\in \sA$.
For $1\leq \mu\leq \mu_0$, we define,
\begin{equation*}
j_E^{\mu} = \begin{cases}
j_{\mu} & \text{if }\mu\in E,\\
\infty & \text{otherwise.}
\end{cases}
\end{equation*}
For $\mu>\mu_0$, we recursively define,
\begin{equation*}
j_E^{\mu} =\begin{cases}
j_\mu & \text{if }\mu\in E,\\
\min \q\{\infty, j_E^{\mu-1}\w\} & \text{otherwise.}
\end{cases}
\end{equation*}
One defines $A_{j_{E}}$, $M_{j_E}$ in the same manner as before, but with
this choice of $j_E$.


Now the proof goes through just as before.  Whenever
one uses $B_j$ or $T_j$ one must restrict attention to $j\in \lA$.
However, when one considers $D_j$, one allows $j$
to range over $j\in \N^{\nu}$.

In fact, the above methods work for more general $\sA\subseteq \q[0,1\w]^\nu$.
For instance, just by changing $e$ and $a$, studying the operators
associated to
\begin{equation*}
\sA=\q\{\delta=\q( \delta_1,\ldots, \delta_\nu\w)\in \q[0,1\w]^\nu : \delta_{\mu_0}^{b_{\mu_0}}\leq C_{\mu_0+1} \delta_{\mu_0}^{b_{\mu_0+1}}\leq \cdots\leq C_{\nu} \delta_\nu^{b_\nu}  \w\},
\end{equation*}
where $b_\mu$ and $C_\mu$ are positive numbers, is equivalent to studying
the operators associated to
$\sA_{\mu_0}$.
See \cite{StreetMultiParameterSingRadonLt} for the definition
of the class of kernels $\sK$ for more general sets $\sA$.

There were two main reasons that our methods applied to $\sA_{\mu_0}$.
\begin{enumerate}
\item There was a natural choice of $\q( X^\mu, d^\mu\w)$ for each $\mu$.
\item Given $\delta_E$ as above (for $\delta\in \sA$), there was a natural (``minimal'') choice $\deltah\in \sA$
with $\deltah_{\mu}=\delta_\mu$, $\forall\mu\in E$.
\end{enumerate}
There are, of course, many subsets $\sA\subseteq \q[0,1\w]^\nu$ where
no such natural choices can be made.  There are more examples
(which do satisfy the above in an appropriate way) which can
be covered by our methods.  However, we know of no simple general
condition unifying these examples.  Moreover, for all
the applications we have in mind, $\sA_{\mu_0}$ will suffice.
We, therefore, say no more on this issue, here.

\section{Singular integrals not of Radon transform type}\label{SectionSingularIntegrals}
The main point of Section \ref{SectionCZOps} was that
a single-parameter special case of the operators
studied in this paper, fell under the general
Calder\'on-Zygmund singular integral
framework.
The point of this section is to make a few remarks
of the multi-parameter special case of our main theorems
which is analogous to this single-parameter special case.
The operators studied in this section can be considered
as a prototype for a multi-parameter analog 
of parts of the Calder\'on-Zygmund theory.

\begin{rmk}
Often, when one hears of {\it multi-parameter singular integrals},
it is the product theory of singular integrals to which is being
referred.  See, e.g., \cite{FeffermanSingularIntegralsOnProductDomains,
NagelSteinOnTheProductTheoryOfSingularIntegrals}.
The operators discussed in this section are not necessarily
of product type.
\end{rmk}

Suppose we are given $\nu$ families of $C^\infty$ vector fields
with single-parameter formal degrees,
$\q( X^\mu, d^{\mu}\w)= \q( X^\mu_1, d^{\mu}_1\w) ,\ldots, \q( X^{\mu}_{q_\mu}, d^{\mu}_{q_\mu}\w)$, $1\leq \mu\leq \nu$.
We suppose that each $\q( X^\mu, d^\mu\w)$ satisfies $\sD\q( \K, \q[0,1\w]\w)$.

Let $\q( X_1,d_1\w),\ldots, \q( X_r,d_r\w)$ be the list of vector fields
consisting of $\q( X^\mu_j, \dhc_j^\mu\w)$ for every $\mu$ and $j$,
where $\dhc_j^{\mu}\in \q[0,\infty\w)^\nu$ is $d_j^\mu$ in the $\mu$
coordinate and $0$ in every other coordinate.  

Taking
$\mu_0=\nu$ (i.e., $\sA=\q[0,1\w]^\nu$), we suppose $\q( X_1,d_1\w),\ldots, \q( X_r,d_r\w)$ generates a finite list $\q( X_1,d_1\w) ,\ldots, \q( X_q,d_q\w)$.
We take $N=q$ and define $\nu$-parameter dilations on $\R^N$ by,
\begin{equation*}
\delta\q( t_1,\ldots, t_q\w) = \q( \delta^{d_1}t_1,\ldots \delta^{d_q}t_q\w),
\end{equation*}
for $\delta\in \q[0,\infty\w)^\nu$.
Define,
\begin{equation*}
\gamma_{\q( t_1,\ldots, t_q\w)} \q( x\w) = e^{t_1X_1+\cdots + t_qX_q}x.
\end{equation*}
We consider operators, $T$, of the form covered in Theorem
\ref{ThmMainThmSecondPass}, where $K\in \sK\q( q,d,a,\nu,\nu\w)$
for some small $a>0$.
It follows from the remarks in Section 17.1 of \cite{StreetMultiParameterSingRadonLt}
that all of the assumptions of Theorem \ref{ThmMainThmSecondPass}
are satisfied with the above choices.
Hence, $T$ is bounded on $L^p$, $1<p<\infty$.

For $K\in \sK\q( q,d,a,\nu,\nu\w)$ decompose $K$,
\begin{equation*}
K=\sum_{j\in \N^{\nu}} \dil{\vsig_j}{2^j},
\end{equation*}
where $\vsig_j$ is as in Definition \ref{DefnsK}.
Corresponding to this decomposition of $K$, one obtains
a decomposition of $T$, $T=\sum_{j\in \N^{\nu}} T_j$.

Let $T_j\q( x,y\w)$ denote the Schwartz kernel of $T_j$.
It follows directly from Proposition 4.22 of
\cite{StreetMultiParameterCCBalls} that
\begin{itemize}
\item $T_j\q( x,y\w)$ is supported on $y\in \B{X}{d}{x}{2^{-j}}$,
\item $\q|T_j\q( x,y\w)\w|\lesssim \Vol{\B{X}{d}{x}{2^{-j}} }^{-1}$.
\end{itemize}
In the one parameter situation, this is just the fact that
a Calder\'on-Zygmund singular integral can be
decomposed into dyadic scales in the usual way.  
Thus, the above is a prototype for a multi-parameter generalization
of the usual dyadic decomposition of a single-parameter Calder\'on-Zygmund
singular integral operator.

\begin{rmk}
In light of the above, $T_j^{*}T_j$ is no ``smoother'' than $T_j$.
The reader used to the single parameter theory might then
suspect that a $T^{*}T$ type iteration argument will not be helpful in our studies.
However, this is not the case.  Indeed, a $T^{*}T$ type iteration argument
was essential to our proof (in Section \ref{SectionGenL2}).
The idea is that when $j_1,j_2\in \N^{\nu}$ and $j_1\wedge j_2\ne j_1,j_2$,
$\q( T_{j_1}^{*} T_{j_2}\w)^{*} T_{j_1}^{*} T_{j_2}$ is smoother than $T_{j_1}^{*}T_{j_2}$.  
\end{rmk}

We now turn to maximal functions.  With all the same choices
as above, we define $\sM$ as in Theorem \ref{ThmMainMaxThm}.
With $\psi_1,\psi_2$ as in Theorem \ref{ThmMainMaxThm},
define a new maximal operator by,
\begin{equation*}
\sMt f\q( x\w) = \sup_{\q|\delta\w|\leq a'} \psi_1\q( x\w) \frac{1}{\Vol{\B{X}{d}{x}{\delta}}} \int_{\B{X}{d}{x}{\delta}} \q| f\q( y\w) \psi_2\q( y\w)\w| \: dy.
\end{equation*}
It follows directly from Proposition 4.22 of
\cite{StreetMultiParameterCCBalls} that
$\sMt f\q( x\w) \lesssim \sM f\q( x\w)$, provided $a'>0$ is sufficiently small.
Hence, $\sMt$ is bounded on $L^p$, $1<p\leq \infty$.
This generalizes the maximal results of
\cite{StreetMultiParameterCCBalls}.

Reduction to Theorem \ref{ThmMainMaxThm} is not the only
way to prove the $L^p$ boundedness of $\sMt$.
Indeed, for each $\mu$, define the maximal operator,
\begin{equation*}
\sMt_\mu f\q( x\w) = \sup_{0<\delta_\mu\leq a''} \denum{\psi}{0}\q( x\w) \frac{1}{\Vol{\B{X^\mu}{d^\mu}{x}{\delta_\mu}}} \int_{\B{X^\mu}{d^\mu}{x}{\delta_\mu}} \q| f\q( y\w) \denum{\psi}{0}\q( y\w)\w| \: dy.
\end{equation*}
It is shown in Section 6.2 of \cite{StreetMultiParameterCCBalls}
that $\sMt_\mu$ is bounded on $L^p$ ($1<p\leq \infty$) for each $\mu$.
This proceeds in a similar manner to the methods in
Section \ref{SectionCZNoSpan}:  by reduction to the
classical Calder\'on-Zygmund theory.

It can be shown that,
\begin{equation}\label{EqnsMtIneq}
\sMt f\q( x\w) = \q(\sMt_1\cdots \sMt_\nu \w)^M f\q( x\w),
\end{equation}
for some large $M$ (provided $a'$ is sufficiently smaller than $a''$); and the $L^p$ boundedness of $\sMt$ follows.
The proof of \eqref{EqnsMtIneq} is somewhat lengthy and technical,
and does not seem to yield Theorem \ref{ThmMainMaxThm} in the general case.
We, therefore, say no more about this here.

In this situation, we can develop a Littlewood-Paley square function
of an appropriate type.
While the operators $D_j$ from Section \ref{SectionAuxOps} were sufficient to create a Littlewood-Paley
square function to prove the $L^p$ boundedness of $T$, they are
not of the same type as $T_j$--and therefore take us out
of the class of operators we are discussing.

Instead, one uses that the distribution $\delta_0\in \sK\q( q, d, a, \nu,\nu\w)$ (Proposition 16.3 of \cite{StreetMultiParameterSingRadonLt}).
Write,
\begin{equation*}
\delta_0 = \sum_{j\in \N^{\nu}} \dil{\vsig_j}{2^j},
\end{equation*}
where $\vsig_j$ is
as in Definition \ref{DefnsK}.
Define,
\begin{equation*}
\Dt_j f\q( x\w) = \denum{\psi}{-2}\q( x\w) \int f\q(\gamma_t\q( x\w) \w) \denum{\psi}{-2}\q( \gamma_t\q( x\w) \w) \dil{\vsig_j}{2^j}\q( t\w) \: dt.
\end{equation*}
Thus, $\denum{\psi}{-2}^2 = \sum_{j\in \N^{\nu}} \Dt_j$.
One can recreate the theory in Section \ref{SectionSquareFunc}
with $D_j$ replaced by $\Dt_j$,
so long as one uses Theorem \ref{ThmMainThmSecondPass}
instead of Theorem \ref{ThmSingleParamSingInt} throughout.
We therefore obtain a Calder\'on-type ``reproducing formula''
and a Littlewood-Paley square function in terms
of $\Dt_j$.

\section{Some comments on maximal operators}\label{SectionMaximalComment}
There are a number of maximal operators in the literature
which are related to the one discussed in Theorem \ref{ThmMainMaxThm}.
The ones most closely related are those 
discussed in
\cite{ChristTheStrongMaximalFunctionOnANilpotentGroup},
where certain strong maximal functions on nilpotent Lie groups
are discussed.
Of course, our methods also apply to convolution
operators on nilpotent groups.
See Section 17.2 of \cite{StreetMultiParameterSingRadonLt}.
Our results can be used to study some of the maximal operators which were
covered in \cite{ChristTheStrongMaximalFunctionOnANilpotentGroup},
and we discuss this below.
At the end of this section, we discuss the fact that not all of
the maximal operators from \cite{ChristTheStrongMaximalFunctionOnANilpotentGroup} are
covered by our results.
Nevertheless, these maximal operators can be covered by the {\it methods} of this
paper, and this will be taken up (and generalized)
in \cite{StreetMultiParameterSingRadonAnal}.


In what follows, we describe the connection between the results
in this paper and the results in
\cite{ChristTheStrongMaximalFunctionOnANilpotentGroup}
in the special case of the three dimensional Heisenberg group, $\Ho$.
All of the comments that follow work more generally for,
say, stratified nilpotent Lie groups, but we leave those details
to the interested reader.
As a manifold, $\Ho = \C\times \R$, and we give it coordinates
$\q( z,t\w) = \q( x,y,t\w)$.
A basis for the left invariant vector fields on $\Ho$
is $X=\partial_x - 2y\partial_y$, $Y= \partial_y+2x\partial_t$,
$T=\partial_t$.

In \cite{ChristTheStrongMaximalFunctionOnANilpotentGroup}, the following
strong maximal function is considered,
\begin{equation*}
\sMt f\q( \xi\w) = \sup_{\delta_1,\delta_2,\delta_3>0} \int_{\q|\q( x,y,t\w)\w|\leq 1} \q|f\q(e^{x\delta_1 X+ y\delta_2 Y + t\delta_3 T}\xi\w)\w| \: dx\: dy\: dt. 
\end{equation*}
It is shown that $\sMt$ is bounded on $L^p$, $1<p\leq \infty$.
We claim that this result follows from Theorem \ref{ThmMainMaxThm}.
To do this, we show,
\begin{equation*}
\sMt_N f\q( \xi\w) = \sup_{N\geq \delta_1,\delta_2,\delta_3>0} \int_{\q|\q( x,y,t\w)\w|\leq 1} \q|f\q(e^{x\delta_1 X+ y\delta_2 Y + t\delta_3 T}\xi\w)\w| \: dx\: dy\: dt,
\end{equation*}
is bounded on $L^p$ ($1<p<\infty$) with bound independent of $N$.

Indeed, let $\psi\geq 0$ be a $C_0^\infty$ function which equals
$1$ on a neighborhood of $0$.  Define,
\begin{equation*}
\sM f\q( \xi\w) = \sup_{1\geq \delta_1,\delta_2,\delta_3>0} \psi\q( \xi\w) \int_{\q|\q( x,y,t\w)\w|\leq a} \q|f\q(e^{x\delta_1 X+ y\delta_2 Y + t\delta_3 T}\xi\w)\w| \psi\q(e^{x\delta_1 X+ y\delta_2 Y + t\delta_3 T}\xi \w) \: dx\: dy\: dt,
\end{equation*}
where $a>0$ is some small number.
It is easy to verify that Theorem \ref{ThmMainMaxThm} applies (see
Section 17.2 of \cite{StreetMultiParameterSingRadonLt}).
Thus $\sM$ is bounded on $L^p$.
Note, for $f$ with small support near $0$, we trivially have,
\begin{equation*}
\sMt_a f\q( \xi\w) \lesssim \sM f\q( \xi\w),
\end{equation*}
for every $\xi$.

Now consider the one-parameter dilations on $\Ho$ given by,
\begin{equation}\label{EqnOneParamDil}
r\q( x,y,t\w) = \q( rx,ry,r^2 t\w),
\end{equation}
for $r\in \q( 0,\infty\w)$.
Define, for $1<p<\infty$,
\begin{equation*}
\dilp{f}{r}\q( \xi\w) = r^{4/p} f\q( r\xi\w),
\end{equation*}
so that $\LpN{p}{\dilp{f}{r}}=\LpN{p}{f}$.

It is easy to see that,
\begin{equation*}
\dilp{\q(\sMt_{N/r} \dilp{f}{r} \w)}{1/r} = \sMt_{N} f.
\end{equation*}
Fix $f\in L^p$ with compact support.  Taking $r$ so large $N/r\leq a$,
and $\dilp{f}{r}$ has small support, we see,
\begin{equation*}
\LpN{p}{\sMt_N f} = \LpN{p}{\sMt_{N/r} \dilp{f}{r}} \lesssim \LpN{p}{\sM \dilp{f}{r}}\lesssim \LpN{p}{\dilp{f}{r}} = \LpN{p}{f}.
\end{equation*}
Thus, we have,
\begin{equation*}
\LpN{p}{\sMt_N f}\lesssim \LpN{p}{f},
\end{equation*}
for every $f$ with compact support.  A limiting argument completes the proof.

There is another approach which can be used to prove the $L^p$
boundedness of $\sMt$.  Namely, one could simply recreate
the entire proof in this paper, without using cutoff functions,
and instead of restricting to $\delta\in \q[0,1\w]^{\nu}$, one
allows $\delta\in \q[0,\infty\w)^\nu$.  It is easy to see that,
in this special case, all of our methods go through.  This is
due to the fact that one has global one-parameter dilations,
\eqref{EqnOneParamDil},
on $\Ho$ which respect each aspect of our proof.

The proof method 
for the $L^2$ boundedness of $\sMt$
in \cite{ChristTheStrongMaximalFunctionOnANilpotentGroup}
is closely related to the proof in this paper.
One main difference is that (for certain maximal operators
more general than $\sMt$),
\cite{ChristTheStrongMaximalFunctionOnANilpotentGroup}
uses transference methods to lift the problem to a 
higher dimensional maximal function.
This allows \cite{ChristTheStrongMaximalFunctionOnANilpotentGroup}
to deal with certain maximal functions on nilpotent groups which are not
directly applicable by our methods.

It turns out that all of the maximal operators
covered in \cite{ChristTheStrongMaximalFunctionOnANilpotentGroup}
{\it can} be covered by our methods, with some modifications.
This will be discussed in
\cite{StreetMultiParameterSingRadonAnal},
where (among other things) the results of
\cite{ChristTheStrongMaximalFunctionOnANilpotentGroup}
will be generalized.

To understand where Theorem \ref{ThmMainMaxThm} falls short, consider the
function $\gamma: \R^{2}\times \R\rightarrow \R$ given
by $\gamma_{s,t}\q( x\w) = x-st$.  It is easy to see,
using the methods of Section 17.5 of
\cite{StreetMultiParameterSingRadonLt} that there
is a product kernel $K\q( s,t\w)\in \sK\q(2,\q(\q(1,0\w),\q( 0,1\w)\w),a,2,2\w) $
supported on $\Q^2\q( a\w)$ (with $a$ as small as we like)
such that the corresponding singular Radon transform
(as in Theorem \ref{ThmMainThmFirstPass})
is not bounded on $L^2$.  This fact was first noted
in \cite{NagelWaingerL2BoundednessOfHilbertTransformsMultiParameterGroup}.

However, if $\psi\in C_0^\infty$, $\psi\geq 0$, is supported sufficiently close to $0$,
the maximal function,
\begin{equation*}
\sM f\q( x\w) = \sup_{0<\delta_1,\delta_2\leq a} \psi\q( x\w) \int \q|f\q(\gamma_{\delta_1 s, \delta_2 t}\q( x\w)\w) \w|\: ds\: dt,
\end{equation*}
is bounded on $L^p\q( \R\w)$ ($1<p\leq \infty$), for $a>0$ sufficiently small.
Thus, in an ideal world, Theorem \ref{ThmMainMaxThm} would be 
generalized to apply to this choice of $\gamma$ (and other, more complicated,\footnote{Because of the form of $\gamma$, the operator $\sM$ is actually equal to a
one-parameter maximal operator, which is covered by Theorem \ref{ThmMainMaxThm}.
There are other choices of $\gamma$ of the same general type where this is not the case,
yet the maximal operator is still bounded.}
$\gamma$ like it).
However, since we used the {\it same} class of $\gamma$
for Theorem \ref{ThmMainThmFirstPass} and Theorem \ref{ThmMainMaxThm},
and Theorem \ref{ThmMainThmFirstPass} fails for this choice of $\gamma$,
our methods need to be modified to attack this sort of example.

In fact, it is possible to modify our methods in a natural way
to deal with cases the same type as this choice of $\gamma$.
This will be discussed in detail
in \cite{StreetMultiParameterSingRadonAnal}.
The reason we have not done so here, is that in order to include
$\gamma$ in the class of functions we study, we will need to 
strengthen other aspects of our assumptions in a few technical ways.
Thus, the maximal theorem proven in
\cite{StreetMultiParameterSingRadonAnal} will not
be strictly stronger than the one in this paper.
This is an issue, since the proof of
Theorem \ref{ThmMainThmSecondPass} used Theorem \ref{ThmMainMaxThm}.
Thus, we would end up weakening Theorem \ref{ThmMainThmSecondPass},
if we attempted to modify Theorem \ref{ThmMainMaxThm}.
See Remark \ref{RmkABetterMaxForRealAnal} for further details
on the sort of maximal results we will prove
in \cite{StreetMultiParameterSingRadonAnal}.

\bibliographystyle{amsalpha}

\bibliography{radon}

\providecommand{\bysame}{\leavevmode\hbox to3em{\hrulefill}\thinspace}
\providecommand{\MR}{\relax\ifhmode\unskip\space\fi MR }
\providecommand{\MRhref}[2]{%
  \href{http://www.ams.org/mathscinet-getitem?mr=#1}{#2}
}
\providecommand{\href}[2]{#2}
\begin{thebibliography}{NRSW89}

\bibitem[Chr92]{ChristTheStrongMaximalFunctionOnANilpotentGroup}
Michael Christ, \emph{The strong maximal function on a nilpotent group}, Trans.
  Amer. Math. Soc. \textbf{331} (1992), no.~1, 1--13. \MR{MR1104197
  (92j:42018)}

\bibitem[CNSW99]{ChristNagelSteinWaingerSingularAndMaximalRadonTransforms}
Michael Christ, Alexander Nagel, Elias~M. Stein, and Stephen Wainger,
  \emph{Singular and maximal {R}adon transforms: analysis and geometry}, Ann.
  of Math. (2) \textbf{150} (1999), no.~2, 489--577. \MR{MR1726701
  (2000j:42023)}

\bibitem[Fef81]{FeffermanSingularIntegralsOnProductDomains}
Robert Fefferman, \emph{Singular integrals on product domains}, Bull. Amer.
  Math. Soc. (N.S.) \textbf{4} (1981), no.~2, 195--201. \MR{MR598687
  (83i:42014)}

\bibitem[FS82]{FeffermanSteinSingularIntegralsOnProductSpaces}
Robert Fefferman and Elias~M. Stein, \emph{Singular integrals on product
  spaces}, Adv. in Math. \textbf{45} (1982), no.~2, 117--143. \MR{664621
  (84d:42023)}

\bibitem[GSW99]{GreenleafSeegerWaingerOnXrayTransformsForRigidLineComplexes}
Allan Greenleaf, Andreas Seeger, and Stephen Wainger, \emph{On {X}-ray
  transforms for rigid line complexes and integrals over curves in {${\bf R}\sp
  4$}}, Proc. Amer. Math. Soc. \textbf{127} (1999), no.~12, 3533--3545.
  \MR{MR1670367 (2001a:44002)}

\bibitem[Koe02]{KoenigOnMaximalSobolevAndHolderEstimatesForTheTangentialCR}
Kenneth~D. Koenig, \emph{On maximal {S}obolev and {H}\"older estimates for the
  tangential {C}auchy-{R}iemann operator and boundary {L}aplacian}, Amer. J.
  Math. \textbf{124} (2002), no.~1, 129--197. \MR{MR1879002 (2002m:32061)}

\bibitem[NRS01]{NagelRicciSteinSingularIntegralsWithFlagKernels}
Alexander Nagel, Fulvio Ricci, and Elias~M. Stein, \emph{Singular integrals
  with flag kernels and analysis on quadratic {CR} manifolds}, J. Funct. Anal.
  \textbf{181} (2001), no.~1, 29--118. \MR{MR1818111 (2001m:22018)}

\bibitem[NRSW89]{NagelRosaySteinWaingerEstimatesForTheBergmanAndSzegoKernels}
A.~Nagel, J.-P. Rosay, E.~M. Stein, and S.~Wainger, \emph{Estimates for the
  {B}ergman and {S}zeg{\H o} kernels in {${\bf C}\sp 2$}}, Ann. of Math. (2)
  \textbf{129} (1989), no.~1, 113--149. \MR{MR979602 (90g:32028)}

\bibitem[NS04]{NagelSteinOnTheProductTheoryOfSingularIntegrals}
Alexander Nagel and Elias~M. Stein, \emph{On the product theory of singular
  integrals}, Rev. Mat. Iberoamericana \textbf{20} (2004), no.~2, 531--561.
  \MR{MR2073131 (2006i:42023)}

\bibitem[NSW78]{NagelSteinWaingerDifferentiationInLacunaryDirections}
A.~Nagel, E.~M. Stein, and S.~Wainger, \emph{Differentiation in lacunary
  directions}, Proc. Nat. Acad. Sci. U.S.A. \textbf{75} (1978), no.~3,
  1060--1062. \MR{MR0466470 (57 \#6349)}

\bibitem[NW77]{NagelWaingerL2BoundednessOfHilbertTransformsMultiParameterGroup}
Alexander Nagel and Stephen Wainger, \emph{{$L^{2}$} boundedness of {H}ilbert
  transforms along surfaces and convolution operators homogeneous with respect
  to a multiple parameter group}, Amer. J. Math. \textbf{99} (1977), no.~4,
  761--785. \MR{MR0450901 (56 \#9192)}

\bibitem[SS11]{SteinStreetMultiparameterSingularRadonTransformsAnnounce}
Elias~M. Stein and Brian Street, \emph{Multi-parameter singular {R}adon
  transforms}, Math. Res. Lett. \textbf{18} (2011), no.~2, 257--277.
  \MR{2784671 (2012b:44007)}

\bibitem[SS12]{StreetMultiParameterSingRadonAnal}
\bysame, \emph{Multi-parameter singular {R}adon transforms {III}: {R}eal
  analytic surfaces}, Adv. Math. \textbf{229} (2012), no.~4, 2210--2238.
  \MR{2880220}

\bibitem[Ste70]{SteinSingularIntegrals}
Elias~M. Stein, \emph{Singular integrals and differentiability properties of
  functions}, Princeton Mathematical Series, No. 30, Princeton University
  Press, Princeton, N.J., 1970. \MR{MR0290095 (44 \#7280)}

\bibitem[Ste93]{SteinHarmonicAnalysis}
\bysame, \emph{Harmonic analysis: real-variable methods, orthogonality, and
  oscillatory integrals}, Princeton Mathematical Series, vol.~43, Princeton
  University Press, Princeton, NJ, 1993, With the assistance of Timothy S.
  Murphy, Monographs in Harmonic Analysis, III. \MR{MR1232192 (95c:42002)}

\bibitem[Str11]{StreetMultiParameterCCBalls}
Brian Street, \emph{Multi-parameter {C}arnot-{C}arath\'eodory balls and the
  theorem of {F}robenius}, Rev. Mat. Iberoam. \textbf{27} (2011), no.~2,
  645--732. \MR{2848534 (2012m:53064)}

\bibitem[Str12]{StreetMultiParameterSingRadonLt}
\bysame, \emph{Multi-parameter singular radon transforms {I}: {T}he {$L^2$}
  theory}, J. Anal. Math. \textbf{116} (2012), 83--162. \MR{2892618}

\end{thebibliography}



\center{MCS2010: Primary 42B20, Secondary 42B25}

\center{Keywords: Calder\'on-Zygmund theory, singular integrals, singular
Radon transforms, maximal Radon transforms, Littlewood-Paley theory, product
kernels, flag kernels, Carnot-Carath\'eodory geometry}

\end{document}